\DeclareFontFamily{U}{mathx}{\hyphenchar\font45}
\DeclareFontShape{U}{mathx}{m}{n}{
      <5> <6> <7> <8> <9> <10>
      <10.95> <12> <14.4> <17.28> <20.74> <24.88>
      mathx10
      }{}
\DeclareSymbolFont{mathx}{U}{mathx}{m}{n}
\DeclareMathAccent{\widecheck}      {0}{mathx}{"71}
\newtheorem{theorem}{Theorem}%[section]
\newtheorem{lemma}[theorem]{Lemma}
\newtheorem{corollary}[theorem]{Corollary}
\theoremstyle{definition}
\theoremstyle{remark}
\newtheorem{example}{Example}
\newcommand{\D}{\mathbb{D}}
\newcommand{\N}{\mathbb{N}}
\newcommand{\R}{\mathbb{R}}
\newcommand{\C}{\mathbb{C}}
\renewcommand{\phi}{\varphi}
\DeclareMathOperator{\Real}{Re}
\DeclareMathOperator{\mes}{mes}
\begin{document}

\title[Description of growth and oscillation of solutions]{Description of growth and oscillation of solutions of complex LDE's}
\thanks{The second author was supported in part by the Academy of Finland project \#286877.
The fourth author is supported in part by Ministerio de Econom\'ia y Competitivivad, Spain, 
projects MTM2014-52865-P and MTM2015-69323-REDT; and La Junta de Andaluc\'ia, project FQM210.}

\author{I.~Chyzhykov}
\address{Faculty of Mathematics and Computer Science\newline
\indent Warmia and Mazury University of Olsztyn\newline
\indent S\l{}oneczna 54, Olsztyn, 10710, Poland}               
\email{chyzhykov@matman.uwm.edu.pl}

\author{J.~Gr\"ohn}
\address{Department of Physics and Mathematics\newline
\indent University of Eastern Finland\newline 
\indent P.O. Box 111, FI-80101 Joensuu, Finland}
\email{\vspace*{-0.25cm}janne.grohn@uef.fi}

\author{J.~Heittokangas}
\email{\vspace*{-0.25cm} janne.heittokangas@uef.fi}

\author{J.~R\"atty\"a}
\email{jouni.rattya@uef.fi}

\date{\today}

\subjclass[2010]{Primary 34M10; Secondary 30D35}
% 34M10: Ordinary differential equations, Oscillation, growth of solutions
% 30D35: Functions of a complex variable, Distribution of values, Nevanlinna theory

\keywords{Frequency of zeros,
growth of solutions,
linear differential equation,
logarithmic derivative estimate,
oscillation theory, 
zero distribution}

%%%%%%%%%%%%%%%%%%%%%%%%
%%%% ---- ABSTRACT ---- %%%%
%%%%%%%%%%%%%%%%%%%%%%%%

\begin{abstract}
It is known that,
equally well in the unit disc as in the whole complex plane, the growth of the analytic 
coefficients $A_0,\dotsc,A_{k-2}$ of
\begin{equation*} 
f^{(k)} + A_{k-2} f^{(k-2)} + \dotsb + A_1 f'+ A_0 f = 0, \quad k\geq 2,
\end{equation*}
determines, under certain growth restrictions, not only the growth but also the oscillation of its non-trivial solutions,
and vice versa.
A~uniform treatment of this principle is given in the 
disc $D(0,R)$, $0<R\leq \infty$,
by using several measures for growth that are more flexible than those in the existing literature, 
and therefore permit more detailed analysis. In particular, results obtained
are not restricted to
cases where solutions are of finite (iterated) order of growth in the classical sense. The new findings are 
based on an~accurate integrated estimate for logarithmic derivatives of meromorphic functions,
which preserves generality in terms of three free parameters.
\end{abstract}

\maketitle

%%%%%%%%%%%%%%%%%%%%%%%
%%%% ---- SECTION ---- %%%%
%%%%%%%%%%%%%%%%%%%%%%%

\section{Introduction}
It is a~well-known fact that the growth of
analytic coefficients $A_0, \dotsc, A_{k-1}$ of the differential equation
\begin{equation} \label{de:preli}
  f^{(k)} + A_{k-1} f^{(k-1)} + \dotsb + A_1 f'+ A_0 f = 0, \quad k\geq 2,
\end{equation} 
restricts the growth of solutions of \eqref{de:preli}, and vice versa.
Here we assume analyticity in the disc $D(0,R)$, where $0<R\leq \infty$. We write $\D = D(0,1)$ and $\C = D(0,\infty)$ for short.
In the case $A_{k-1} \equiv 0$ the oscillation of non-trivial solutions of \eqref{de:preli}
provides a~third property, which is known to be equivalent to the other two in certain cases \cite{HR:2011}, \cite{PR:2014}.
Recall also that there exists a~standard transformation which yields $A_{k-1}\equiv 0$
and leaves the zeros of solutions invariant; see \cite{HR:2011} and \cite[p.~74]{L:1993}.

In the present paper we content ourselves to
the case $A_{k-1}\equiv 0$.
Our intention is to elaborate on new circumstances
in which the growth of the Nevanlinna functions $T(r,f)$ and $N(r,1/f)$ of any non-trivial solution $f$ of \eqref{de:preli} and
the growth of the quantity
\begin{equation} \label{eq:qua}
  \max_{j=0,\dotsc, k-2} \,\int_{D(0,r)} |A_j(z)|^{\frac{1}{k-j}} \, dm(z)
\end{equation}
are interchangeable in an~appropriate sense. 
By the growth estimates for solutions of linear differential equations \cite{HKR:2004},
we deduce the asymptotic inequalities
\begin{equation*}
  N(r,1/f) \lesssim 1+T(r,f) \lesssim 1 + \sum_{j=0}^{k-2} \int_{D(0,r)} |A_j(z)|^{\frac{1}{k-j}} \, dm(z),
\end{equation*}
where the comparison constants depend on the initial values of $f$.
Therefore the problem at hand reduces to showing that, if $N(r,1/f)$ of any non-trivial solution~$f$ 
of \eqref{de:preli} has a~certain growth rate, then the quantity in \eqref{eq:qua} has the same or similar growth rate.
An outline of the proof is as follows.
The growth of Nevanlinna characteristics of quotients of linearly independent solutions
can be controlled by the second main theorem of Nevanlinna and the assumption
on zeros of solutions. The~classical representation theorem \cite{K:1969}
provides us means to express coefficients in terms of quotients of linearly independent solutions.
Since this representation entails logarithmic derivatives of meromorphic functions, 
this argument boils down to establishing accurate integrated logarithmic derivative estimates involving several free parameters.

One of the benefits of our approach
on differential equations is the freedom provided by various growth indicators.
This allows us to treat a~large scale of growth categories by uniform generic statements. In particular, results obtained
are not restricted to
cases where solutions are of finite (iterated) order of growth in the classical sense.
The other advantage is the fact that both cases of the whole complex plane and the finite disc can be covered simultaneously.

Logarithmic derivatives of meromorphic functions are considered from a~new perspective which 
preserves generality in terms of three free parameters.
Indeed, assuming that $f$ is meromorphic in a~domain containing the closure $\overline{D(0,R)}$, we estimate
area integrals of generalized logarithmic derivatives of the type
\begin{equation*}
  \int_{r'<|z|<r}  \bigg| \frac{f^{(k)}(z)}{f^{(j)}(z)}  \bigg|^{\frac{1}{k-j}} \, dm(z),
\end{equation*}
where $r'<r<R$ are free, and no exceptional set occurs. Such estimates are of course also of independent interest. Our findings
are accurate, as demonstrated by concrete examples, and
improve results in the existing literature.

The remainder of this paper is organized as follows. The results on differential equations 
and on logarithmic derivatives are discussed in Sections~\ref{sec:results_DE} and~\ref{sec:logdva}, 
respectively. Results on logarithmic derivatives  are proved in
Sections~\ref{sec:logdva_proof} and \ref{sec:nn}, 
while the proofs of the results on differential equations are presented in Sections~\ref{sec:jh}--\ref{sec:mem}.

%%%%%%%%%%%%%%%%%%%%%%%
%%%% ---- SECTION ---- %%%%
%%%%%%%%%%%%%%%%%%%%%%%

\section{Results on differential equations} \label{sec:results_DE}

Let $0<R\leq \infty$ and $\omega\in L^1(0,R)$.
The extension defined by 
$\omega(z) = \omega(|z|)$ for all $z\in D(0,R)$ is called a~radial weight on $D(0,R)$.
For such an $\omega$, write
$\widehat{\omega}(z) = \int_{|z|}^R \omega(s)\, ds$ for $z\in D(0,R)$. We assume throughout 
the paper that $\widehat{\omega}$ is strictly positive on $[0,R)$, for otherwise $\omega(r)=0$ 
for almost all $r$ close to~$R$, and that case is not interesting in our setting.

Our first result characterizes differential equations 
\begin{equation} \label{ldek_gen}
  f^{(k)} + A_{k-2} f^{(k-2)} + \dotsb + A_1 f'+ A_0 f = 0, \quad k\geq 2,
\end{equation} 
whose solutions belong to a Bergman-Nevanlinna type space \cite{N:1925}, \cite{PR:2014}. 
The novelty of this result 
does not only stem from the general growth indicator induced by the auxiliary functions $\Psi,\omega,s$
but also lies in the fact that it includes the cases of the finite disc and the whole complex plane in a~single result.

%%%%%%%%%%%%%%%%%%%%%%%%
%%%% ---- THEOREM ---- %%%%
%%%%%%%%%%%%%%%%%%%%%%%%

\begin{theorem} \label{thm:psi}
Let $\Psi:[0,\infty) \to [0,\infty)$ be a~non-decreasing function which satisfies
$\Psi(x^2) \lesssim \Psi(x)$ for all $0\leq x< \infty$, and $\Psi(\log x) = o(\Psi(x))$ as $x\to\infty$. For fixed $0<R\leq \infty$,
let $s:[0,R) \to [0,R)$ be an increasing function such
that $s(r)\in (r,R)$ for all $0\leq r <R$, let $\omega$ be a~radial weight such that 
$\widehat{\omega}(r) \lesssim  \widehat{\omega}(s(r))$ for all $0\leq r <R$, and assume
\begin{equation} \label{eq:intass}
  %\int_0^R \Psi(r) \, \omega(r) \, dr < \infty, \quad
  \int_0^R \Psi\bigg(  s(r) \log \frac{e \, s(r)}{s(r)-r}  \bigg) \, \omega(r) \, dr<\infty.
\end{equation}
If the coefficients $A_0,\dotsc,A_{k-2}$ are analytic in $D(0,R)$, then the following conditions are equivalent:
\begin{enumerate}[leftmargin=0.75cm]
\item[\rm (i)]
  $\displaystyle\int_0^R \Psi\bigg( \int_{D(0,r)} |A_j(z)|^{\frac{1}{k-j}} \, dm(z) \bigg) \, \omega(r)\, dr < \infty$
  for all $j=0,\dotsc,k-2$;

\item[\rm (ii)]
  $\displaystyle\int_0^R \Psi\big( T(r,f) \big) \, \omega(r) \, dr < \infty$ for all solutions $f$ of \eqref{ldek_gen};

\item[\rm (iii)]
  $\displaystyle\int_0^R \Psi\big( N(r,1/f) \big) \, \omega(r) \, dr < \infty$ for all non-trivial solutions $f$ of \eqref{ldek_gen}.
\end{enumerate}
\end{theorem}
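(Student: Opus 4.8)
The plan is to prove the cycle of implications $(i)\Rightarrow(ii)\Rightarrow(iii)\Rightarrow(i)$, using the known growth estimates for solutions of \eqref{ldek_gen} for the first two, and the logarithmic derivative machinery of the paper for the last.

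\smallskip
\noindent\textbf{The easy implications.}
For $(i)\Rightarrow(ii)$ I would invoke the growth estimate from \cite{HKR:2004} (quoted in the introduction), namely
\[
  T(r,f) \lesssim 1 + \sum_{j=0}^{k-2} \int_{D(0,r)} |A_j(z)|^{\frac{1}{k-j}} \, dm(z),
\]
valid for every solution $f$ of \eqref{ldek_gen}, with a~constant depending only on the initial values. Since $\Psi$ is non-decreasing and subadditive up to a~constant (this follows from $\Psi(x^2)\lesssim\Psi(x)$, which forces $\Psi(2x)\lesssim\Psi(x)$ hence $\Psi(x+y)\lesssim\Psi(x)+\Psi(y)$ for $x,y\geq 1$), applying $\Psi$ to both sides and integrating against $\omega$ turns a~finite sum of finite integrals of type (i) into a~finite integral of type (ii); the additive constant contributes $\Psi(C)\int_0^R\omega<\infty$. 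The implication $(ii)\Rightarrow(iii)$ is immediate from $N(r,1/f)\leq T(r,1/f)=T(r,f)+O(1)$ by Jensen's formula, together with the same monotonicity/subadditivity of $\Psi$.

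\smallskip
\noindent\textbf{The hard implication $(iii)\Rightarrow(i)$.}
This is the heart of the matter and the main obstacle. Following the outline sketched in the introduction, I would fix a~fundamental system $f_1,\dotsc,f_k$ of solutions of \eqref{ldek_gen} and consider the quotients $g_i = f_i/f_k$. The hypothesis (iii) bounds $\int_0^R\Psi(N(r,1/f_i))\,\omega(r)\,dr$ for each $i$; combining this with the Second Main Theorem of Nevanlinna applied to the meromorphic functions $g_i$ (whose zeros and poles are controlled by zeros of the $f_i$), one controls $\int_0^R\Psi(T(r,g_i))\,\omega(r)\,dr$. Next, the classical representation of the coefficients $A_j$ in terms of a~fundamental system and its derivatives (the representation theorem of \cite{K:1969}) writes each $A_j$ as a~rational expression in logarithmic derivatives $g_i^{(\ell)}/g_i$ (or more precisely in quotients of Wronskian-type determinants built from the $g_i$). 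The crucial step is then to estimate
\[
  \int_{D(0,r)} |A_j(z)|^{\frac{1}{k-j}} \, dm(z)
\]
by sums of area integrals of generalized logarithmic derivatives $\big|g^{(k)}/g^{(j)}\big|^{1/(k-j)}$, and to apply the sharp integrated logarithmic derivative estimate of Section~\ref{sec:logdva} — with its three free parameters tuned to the data $\Psi,\omega,s$ — to bound these by an~expression involving $s(r)\log\frac{e\,s(r)}{s(r)-r}$ and $T(s(r),g_i)$. Feeding this into $\Psi$, using $\Psi(x^2)\lesssim\Psi(x)$ to absorb products, the hypothesis $\Psi(\log x)=o(\Psi(x))$ to handle the logarithmic factor, the doubling $\widehat\omega(r)\lesssim\widehat\omega(s(r))$ together with a~change of variables $r\mapsto s(r)$ to pass from integration in $s(r)$ back to integration in $r$, and finally the integrability assumption \eqref{eq:intass} to absorb the purely geometric term, yields (i).

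\smallskip
\noindent\textbf{Where the difficulties concentrate.}
I expect the bookkeeping in $(iii)\Rightarrow(i)$ to be the real work: controlling the exceptional-set-free integrated logarithmic derivative estimate uniformly in the parameters, correctly matching the sub-exponents $1/(k-j)$ coming from the coefficient estimate with the form of the logarithmic derivative bound, and — most delicately — the change-of-variables argument that converts the $s(r)$-dependence (both inside the Nevanlinna characteristics and in the weight $\widehat\omega$) back into a~plain $r$-integral, which is exactly why the hypotheses $\widehat\omega(r)\lesssim\widehat\omega(s(r))$ and \eqref{eq:intass} are imposed. The growth conditions on $\Psi$ are precisely what is needed to linearize these manipulations under the integral sign, and the role of $s$ is to provide the "room" needed by the logarithmic derivative estimate while keeping the weight comparable.
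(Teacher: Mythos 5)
Your outline matches the paper's proof in all essential respects: the cycle $\mathrm{(i)}\Rightarrow\mathrm{(ii)}\Rightarrow\mathrm{(iii)}\Rightarrow\mathrm{(i)}$, the growth estimate of \cite{HKR:2004} together with the quasi-subadditivity of $\Psi$ for the easy implications, and for $\mathrm{(iii)}\Rightarrow\mathrm{(i)}$ the combination of the Second Main Theorem applied to the quotients $f_i/f_k$, Kim's representation of the coefficients, Corollary~\ref{cor:nn}, and the hypotheses on $\Psi$, $\widehat\omega$ and \eqref{eq:intass} to close the estimates (this last chunk is precisely the content of Lemma~\ref{mero-coeffs2} in the paper). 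The one technical device you wave at but do not pin down is how the paper actually converts $\int_0^R\Psi(T(s(r),\cdot))\,\omega(r)\,dr$ back to $\int_0^R\Psi(T(r,\cdot))\,\omega(r)\,dr$ — not a bare change of variables, but a change of variables followed by two integrations by parts exploiting $\widehat\omega(r)\lesssim\widehat\omega(s(r))$ — and the companion absorption argument on truncated intervals $[r_0,R_0]$ that turns $\Psi(\log x)=o(\Psi(x))$ into the inequality $\int_{r_0}^{R_0}\Psi(\log T(s(r),y_j))\,\omega\,dr\le\varepsilon\int_{r_0}^{R_0}\Psi(T(r,y_j))\,\omega\,dr$ needed to feed the SMT error term back into the unknown; since you explicitly flag this bookkeeping as the hard part, the proposal is a faithful roadmap of the paper's argument.
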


Note the following observations regarding Theorem~\ref{thm:psi}:

(a) The analogues of (i) and (ii) are equivalent 
also for the differential equation~\eqref{de:preli}. See \cite{CS:2016} for another general scale to measure the growth 
in the case of the complex plane. 

(b) The result is relevant only when $\Psi$ is unbounded.

(c) The classical choices for $s$
in the cases of $D(0,R)$ and $\C$ are $s(r)=(r+R)/2$ and $s(r)=2r$, respectively. 
While the function~$s$ is absent in the assertions (i)--(iii),
its effect is implicit through the dependence in the hypothesis on $s$, $\Psi$ and~$\omega$. In terms of
applications, the auxiliary function $s$ provides significant freedom to possible choices of $\Psi$ and $\omega$.

(d) The condition $\Psi(x^2) \lesssim \Psi(x)$ requires slow growth and local smoothness. For example, it is satisfied by any positive
power of any (iterated) logarithm. To see that restrictions on the growth alone do not
imply this condition, let $g$ be any non-decreasing unbounded function. Choose
a~sequence $\{x_j\}_{j=1}^{\infty}$ such that $g(x_j) \geq 2^{2^j}$ and $x_{j+1} \geq x_j^2$,
and define $h$ such that $h(x)=2^{2^j}$ for $x_j\leq x < x_{j+1}$. Then $g$ dominates $h$, while $h(x_n)/h(\sqrt{x_n})=2^{2^{n-1}}\to \infty$ as $n\to\infty$.

(e) The assumption $\Psi(\log x) = o(\Psi(x))$, as $x\to\infty$, is trivial for typical choices of $\Psi$
such as $\Psi(x)=\log^+ x$.
However, the condition
is not satisfied by all continuous, increasing and unbounded functions $\Psi$.
A~counterexample is given by $\Psi(x)=\log_n(x)+(n-1)(e-1)$,
$e_n(1)\leq x\leq e_{n+1}(1)$, for which $\Psi(\log x) \sim \Psi(x)$ as $x\to\infty$.
Here
$\log_n$ and $e_n$ stand for iterative logarithms and exponentials, respectively.

(f) For a~fixed $s$, the requirement $\widehat{\omega}(r) \lesssim  \widehat{\omega}(s(r))$
not only controls the rate at which $\widehat\omega$ decays to zero but also demands certain local smoothness.
The situation is in some sense similar to that of $\Psi$.

(g) Theorem~\ref{thm:psi} is relevant only when some solution $f$ of \eqref{ldek_gen} satisfies
\begin{equation*}
%\limsup_{r\to R^-} \, \frac{T(r,f)}{r} = \infty, \quad
\limsup_{r\to R^-} \, \frac{T(r,f)}{s(r)\log((e s(r))/(s(r)-r))} = \infty,
\end{equation*}
but its applicability is not
restricted to any pregiven growth scale. Indeed,
if~$f$ is an~arbitrary
entire function, then we find a~sufficiently smooth and fast growing increasing function~$\varphi$
such that its growth exceeds that of $T(r,f)$ and its inverse~$\varphi^{-1}=\Psi$ satisfies 
$\Psi(x^2) \lesssim \Psi(x)$.
Further, if $s(r)=2r$ and $\omega(r)=(1+r)^{-3}$, then all requirements on $\Psi$, $\omega$ and 
$s$ are fulfilled, and
\begin{equation*}
  \int_0^\infty \Psi\big( T(r,f) \big) \, \omega(r) \, dr
  \leq \int_0^\infty \Psi(\phi(r))\, \omega(r) \, dr 
  = \int_0^\infty r\, \omega(r) \, dr< \infty.
\end{equation*}
The case of the finite disc is similar. This shows, in particular, that Theorem~\ref{thm:psi} is not restricted to functions of finite 
iterated order in the classical sense.

\medskip

Observations similar to (a)--(g) apply for forthcoming results also.

Arguments in the proof of Theorem~\ref{thm:psi} also apply in the case where growth
indicators given in terms of integrals are replaced with ones stated in terms of limit superiors.

%%%%%%%%%%%%%%%%%%%%%%%%
%%%% ---- THEOREM ---- %%%%
%%%%%%%%%%%%%%%%%%%%%%%%

\begin{theorem} \label{thm:psi2}
Let $\Psi:[0,\infty) \to [0,\infty)$ be a~non-decreasing function which satisfies
$\Psi(x^2) \lesssim \Psi(x)$ for all $0\leq x< \infty$, and $\Psi(\log x) = o(\Psi(x))$ as $x\to\infty$. For fixed $0<R\leq \infty$,
let $s:[0,R) \to [0,R)$ be an increasing function such
that $s(r)\in (r,R)$ for all $0\leq r <R$, let $\omega$ be a~radial weight such that 
$\widehat{\omega}(r) \lesssim  \widehat{\omega}(s(r))$ for all $0\leq r <R$, and assume
\begin{equation*} 
 % \limsup_{r\to R^-} \, \Psi(r) \, \widehat\omega(r) < %\infty, \quad
   \limsup_{r\to R^-} \, \Psi\bigg(  %\!\log^+\frac{1}{s(r)-r}  
   s(r) \log \frac{e \, s(r)}{s(r)-r} 
   \bigg) \, \widehat\omega(r)<\infty.
\end{equation*}
If the coefficients $A_0,\dotsc,A_{k-2}$ are analytic in $D(0,R)$, then the following conditions are equivalent:
\begin{enumerate}[leftmargin=0.75cm]
\item[\rm (i)]
  $\displaystyle \limsup_{r\to R^-} \,  \Psi\bigg( \int_{D(0,r)} |A_j(z)|^{\frac{1}{k-j}} \, dm(z) \bigg) \, \widehat\omega(r)\, < \infty$
  for all $j=0,\dotsc,k-2$;

\item[\rm (ii)]
  $\displaystyle \limsup_{r\to R^-} \,  \Psi\big( T(r,f) \big) \, \widehat\omega(r) < \infty$ for all solutions $f$ of \eqref{ldek_gen};

\item[\rm (iii)]
  $\displaystyle \limsup_{r\to R^-} \, \Psi\big( N(r,1/f) \big) \, \widehat\omega(r)  < \infty$ for all non-trivial solutions $f$ of \eqref{ldek_gen}.
\end{enumerate}
\end{theorem}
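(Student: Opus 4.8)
The plan is to run the same three-step cycle (i)$\Rightarrow$(ii)$\Rightarrow$(iii)$\Rightarrow$(i) that underlies Theorem~\ref{thm:psi}, replacing every $\omega$-weighted integral $\int_0^R\Psi(\cdot)\,\omega(r)\,dr$ with the functional $\limsup_{r\to R^-}\Psi(\cdot)\,\widehat\omega(r)$, and carefully tracking that each estimate used in that proof is of a pointwise radial type that survives this substitution. Concretely, for (i)$\Rightarrow$(ii) I would invoke the growth estimate for solutions of linear differential equations from \cite{HKR:2004} recalled in the introduction, namely
\begin{equation*}
  T(r,f) \lesssim 1 + \sum_{j=0}^{k-2} \int_{D(0,r)} |A_j(z)|^{\frac{1}{k-j}} \, dm(z),
\end{equation*}
valid for all $0\le r<R$ with constants depending only on the initial data of $f$. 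Since $\Psi$ is non-decreasing and satisfies $\Psi(x^2)\lesssim\Psi(x)$ (which in particular gives $\Psi(C+x)\lesssim\Psi(1+x)\lesssim\Psi(x)$ for large $x$, absorbing additive and multiplicative constants), applying $\Psi$ to both sides, multiplying by $\widehat\omega(r)$, and taking $\limsup_{r\to R^-}$ yields (ii) from (i). The implication (ii)$\Rightarrow$(iii) is immediate from $N(r,1/f)\le T(r,1/f)=T(r,f)+O(1)$, again using the growth condition on $\Psi$ to absorb the $O(1)$ term.

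The substantive direction is (iii)$\Rightarrow$(i), and here I would follow the scheme sketched in the introduction. Fix linearly independent solutions $f_1,\dots,f_k$ of \eqref{ldek_gen} and set $g_m=f_{m+1}/f_1$. The classical representation theorem \cite{K:1969} expresses each coefficient $A_j$ as a polynomial in the logarithmic derivatives $g_m^{(\ell)}/g_m$ and (for the bottom coefficients) in derivatives of the Wronskian-type quantities built from the $g_m$; after taking $(k-j)$-th roots and integrating over $D(0,r)$, this reduces the estimation of $\int_{D(0,r)}|A_j|^{1/(k-j)}\,dm$ to integrated logarithmic derivative estimates of the form $\int_{D(0,r)}|h^{(\ell)}/h^{(i)}|^{1/(\ell-i)}\,dm$ for meromorphic $h$ of the shape $g_m$ or products thereof. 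The integrated logarithmic derivative results of Section~\ref{sec:logdva} then bound these by an expression of the type $T(s(r),h)$ times $s(r)\log\frac{e\,s(r)}{s(r)-r}$ plus lower-order terms, with no exceptional set. Finally the Nevanlinna characteristics $T(s(r),g_m)$ are controlled: by the second main theorem applied to $g_m$, whose value distribution is governed by the zeros of $f_1$ and $f_{m+1}$ (solutions of \eqref{ldek_gen}), one gets $T(r,g_m)\lesssim \sum N(r,1/f_i)+S(r,g_m)$, and the error terms $S(r,g_m)$ are again absorbed using integrated logarithmic derivative bounds. Chaining these, $\int_{D(0,r)}|A_j|^{1/(k-j)}\,dm \lesssim \big(\max_i N(s(r),1/f_i)\big)\cdot s(r)\log\frac{e\,s(r)}{s(r)-r} + (\text{lower order})$. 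Applying $\Psi$, using $\Psi(xy)\lesssim\Psi(x^2)+\Psi(y^2)\lesssim\Psi(x)+\Psi(y)$ together with $\Psi(\log x)=o(\Psi(x))$ to split the product, multiplying by $\widehat\omega(r)$, and using $\widehat\omega(r)\lesssim\widehat\omega(s(r))$ to pass from the scale $r$ to the scale $s(r)$, I reduce the $\limsup$ of $\Psi\big(\int_{D(0,r)}|A_j|^{1/(k-j)}\,dm\big)\widehat\omega(r)$ to a sum of: a $\limsup$ of $\Psi(N(s(r),1/f_i))\,\widehat\omega(s(r))$, which is finite by hypothesis (iii); a $\limsup$ of $\Psi\big(s(r)\log\frac{e\,s(r)}{s(r)-r}\big)\,\widehat\omega(r)$, which is finite by the standing assumption on $\Psi,s,\omega$; and negligible terms.

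The main obstacle, as in Theorem~\ref{thm:psi}, is the bookkeeping in this last reduction: one must verify that every term produced by the representation theorem and by the second main theorem is genuinely of lower order relative to the two controlled quantities, i.e. that after applying $\Psi$ and weighting by $\widehat\omega$ it does not contribute to the $\limsup$. This is where the three hypotheses on $\Psi$ are each used — $\Psi(x^2)\lesssim\Psi(x)$ to handle the $(k-j)$-th roots, products and additive constants; $\Psi(\log x)=o(\Psi(x))$ to separate off the logarithmic factor $\log\frac{e\,s(r)}{s(r)-r}$; and the monotonicity together with $\widehat\omega(r)\lesssim\widehat\omega(s(r))$ to transfer estimates from radius $r$ to radius $s(r)$. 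The only genuinely new point compared with Theorem~\ref{thm:psi} is that these manipulations must be performed \emph{pointwise in $r$ before taking the $\limsup$}, rather than under an integral sign; since all the underlying differential-equation and logarithmic-derivative estimates hold pointwise in $r$ (away from an exceptional set in the case of the second main theorem, which here I would handle by the fact that the integrated logarithmic derivative estimates of Section~\ref{sec:logdva} have no exceptional set, or by a standard density/continuity argument on $\limsup$), the argument goes through mutatis mutandis, and I expect the proof to consist largely of pointing to the relevant steps in the proof of Theorem~\ref{thm:psi} and noting that each is valid pointwise.
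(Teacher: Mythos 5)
Your overall route is the same one the paper intends: the paper omits the proof of Theorem~\ref{thm:psi2}, stating it is obtained by running the proof of Theorem~\ref{thm:psi} with the weighted integrals replaced by the weighted limit superiors, and your cycle (i)$\Rightarrow$(ii) via the growth estimates of \cite{HKR:2004}, (ii)$\Rightarrow$(iii) trivially, and (iii)$\Rightarrow$(i) via the second main theorem, Kim's representation \cite{K:1969} and the integrated logarithmic derivative estimates is exactly that scheme. However, there is one point where ``pointwise and then take $\limsup$'' is not automatic, and your sketch passes over it: the error term of the second main theorem in \eqref{eq:smt} is $\log^+T(s(r),y_j)$, evaluated at the \emph{larger} radius $s(r)$. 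In the proof of Theorem~\ref{thm:psi} this is handled by an $\varepsilon$-absorption over a truncated interval $[r_0,R_0]$, where $\int_{r_0}^{R_0}\Psi(T(r,y_j))\,\omega(r)\,dr$ is finite and can be subtracted from both sides before letting $R_0\to R$. In the limsup setting the analogous inequality, with $u(r)=\Psi(T(r,y_j))\,\widehat\omega(r)$, reads $u(r)\le B+\varepsilon\,u(s(r))$ for $r$ near $R$, and this functional inequality alone does \emph{not} imply that $u$ is bounded (for instance $u(r)=\exp(1/(R-r))$ satisfies it for any $\varepsilon>0$, since $u(s(r))\gg u(r)$); taking limit superiors and ``absorbing $\varepsilon L$'' presupposes $L<\infty$, which is precisely what is to be proved. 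This is not a phantom issue: the companion Theorem~\ref{limsup-thm} builds the a~priori finiteness $\rho_{\Psi,\varphi}(f)<\infty$ into its hypothesis (iii) exactly so that this absorption is legitimate, whereas Theorem~\ref{thm:psi2}(iii) contains no such a~priori bound.

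So the genuine gap in your proposal is the absence of any mechanism replacing the truncation-and-absorption step: attributing the absorption of the Nevanlinna error term to ``integrated logarithmic derivative bounds'' does not address it, and a ``density/continuity argument on $\limsup$'' is not formulated. To close it you need a version of the argument in which the error term is brought back to the radius $r$ itself before absorbing --- for example a second main theorem with error $\log^+T(r,y_j)$ valid outside an exceptional set of finite (logarithmic) measure, absorbed pointwise since $T(r,y_j)<\infty$ for each fixed $r$, together with a monotonicity argument using $\widehat\omega(r)\lesssim\widehat\omega(s(r))$ to control the exceptional radii --- or some other a~priori control of $\Psi(T(r,y_j))\,\widehat\omega(r)$ along the orbit $r,s(r),s(s(r)),\dotsc$ that makes the iteration $u(r)\le B+\varepsilon u(s(r))$ conclusive. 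The rest of your outline (the use of \eqref{eq:klk}-type bounds for the $0$-, $\infty$- and $1$-points, the representation of the coefficients, H\"older's inequality, and the splitting of $\Psi$ of products via $\Psi(x^2)\lesssim\Psi(x)$ and $\Psi(\log x)=o(\Psi(x))$) matches the paper's argument and is fine.
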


Proofs of Theorem~\ref{thm:psi} and \ref{thm:psi2} are similar and the latter is omitted.
The small-oh version of Theorem~\ref{thm:psi2} is also valid in the sense
that the finiteness of limit superiors can be replaced by the requirement that they are zero (all five of them).

Let $\widehat{\mathcal{D}}$ be the class of radial weights for which there exists a~constant
$C=C(\omega)\geq 1$ such that
$\widehat{\omega}(r) \leq C  \, \widehat{\omega}((1+r)/2)$ for all $0\leq r <1$.
Moreover, let $\widecheck{\mathcal{D}}$ be the class of radial weights for which there exist constants
$K=K(\omega)\geq 1$ and $L=L(\omega)\geq 1$ such that
$\widehat{\omega}(r) \geq L  \, \widehat{\omega}(1-(1-r)/K)$ for all $0\leq r <1$.
We write $\mathcal{D} = \widehat{\mathcal{D}} \cap \widecheck{\mathcal{D}}$ for brevity.
For a~radial weight $\omega$, define
\begin{equation*}
  \omega^\star(z) = \int_{|z|}^1 \omega(s) \, \log\frac{s}{|z|} \, s\, ds, \quad z\in\D\setminus\{0\}.
\end{equation*}

We proceed to consider an~improvement of the main result in \cite[Chapter~7]{PR:2014}, which
concerns \eqref{ldek_gen} in the unit disc. The following result
is a~far reaching generalization of \cite[Theorem~7.9]{PR:2014} 
requiring much less regularity on the weight~$\omega$.

%%%%%%%%%%%%%%%%%%%%%%%%
%%%% ---- THEOREM ---- %%%%
%%%%%%%%%%%%%%%%%%%%%%%%

\begin{theorem} \label{thm:ident}
Let $\omega\in \mathcal{D}$. If the coefficients $A_0,\dotsc,A_{k-2}$ are analytic in $\D$, 
then the following conditions are equivalent:

\begin{enumerate}[leftmargin=0.75cm]
\item[\rm (i)] $\displaystyle\int_{\D} |A_j(z)|^{\frac{1}{k-j}} \, \widehat{\omega}(z) \, dm(z) < \infty$ for all $j=0,\dotsc,k-2$;

\item[\rm (ii)] $\displaystyle{\int_0^1} T(r,f) \, \omega(r)\, dr < \infty$
for all solutions of \eqref{ldek_gen};

\item[\rm (iii)] $\displaystyle{\int_0^1} N(r,1/f) \, \omega(r)\, dr < \infty$
for all non-trivial solutions of \eqref{ldek_gen};

\item[\rm (iv)] zero sequences $\{z_k\}$ of non-trivial solutions of \eqref{ldek_gen} satisfy
$\displaystyle\sum_k \omega^\star(z_k)<\infty$.
\end{enumerate}
\end{theorem}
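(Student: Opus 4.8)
The plan is to establish Theorem~\ref{thm:ident} by reducing it to the more flexible Theorem~\ref{thm:psi}, specialized to $\Psi(x)=x$, and then handling the additional equivalence with condition~(iv) separately via the geometry of zero sequences. First I would observe that the choice $\Psi(x)=x$ trivially satisfies $\Psi(x^2)\lesssim\Psi(x)$? -- no, that fails, so the linear case is genuinely outside the scope of Theorem~\ref{thm:psi} and must be treated directly. Instead, the route is to run the same machinery: the growth estimates for solutions of \eqref{ldek_gen} from \cite{HKR:2004} give $N(r,1/f)\lesssim 1+T(r,f)\lesssim 1+\sum_{j=0}^{k-2}\int_{D(0,r)}|A_j|^{1/(k-j)}\,dm$, so integrating against $\omega(r)\,dr$ and using Fubini together with the definition $\widehat\omega(z)=\int_{|z|}^1\omega(s)\,ds$ yields $\int_{\D}|A_j|^{1/(k-j)}\widehat\omega\,dm=\int_0^1\big(\int_{D(0,r)}|A_j|^{1/(k-j)}\,dm\big)\omega(r)\,dr$, which immediately gives (i)$\Rightarrow$(ii)$\Rightarrow$(iii). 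The substantive direction (iii)$\Rightarrow$(i) is where the core of the paper's method enters: one writes the coefficients $A_j$ in terms of quotients of linearly independent solutions via the representation theorem of \cite{K:1969}, controls the Nevanlinna characteristics of these quotients through the second main theorem and the hypothesis (iii) on $N(r,1/f)$, and then invokes the accurate integrated logarithmic derivative estimates (the results of Sections~\ref{sec:logdva}) to bound $\int_{D(0,r)}|A_j|^{1/(k-j)}\,dm$ in terms of $T(r,f)$ and an error term. The condition $\omega\in\mathcal{D}=\widehat{\mathcal D}\cap\widecheck{\mathcal D}$ is precisely what lets the error terms be absorbed: the $\widehat{\mathcal D}$ (doubling) property replaces the role of the hypothesis $\widehat\omega(r)\lesssim\widehat\omega(s(r))$ with $s(r)=(1+r)/2$, while the $\widecheck{\mathcal D}$ property prevents $\widehat\omega$ from decaying too fast, which is needed so that the logarithmic error terms $\log\frac{1}{1-r}$ arising in the integrated logarithmic derivative estimate are integrable against $\omega$.

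For the equivalence of (iv) with the other conditions, I would use the identity, valid for any radial weight, relating a sum over a zero sequence to an integral of the counting function: specifically $\sum_k\omega^\star(z_k)$ can be rewritten, after switching the order of summation and integration, as a constant times $\int_0^1 n(r,1/f)\,\omega(r)\,(1-r)\,dr$ or, equivalently after an integration by parts, as comparable to $\int_0^1 N(r,1/f)\,\omega(r)\,dr$ up to harmless constants and boundary terms. The key computational fact is that $\omega^\star(z)=\int_{|z|}^1\omega(s)\log(s/|z|)\,s\,ds$ is, for $\omega\in\widehat{\mathcal D}$, comparable to $\widehat\omega(z)(1-|z|)$ near the boundary, and that $\sum_k\widehat\omega(z_k)(1-|z_k|)\asymp\int_0^1 n(r,1/f)\widehat\omega(r)\,dr\asymp\int_0^1 N(r,1/f)\omega(r)\,dr$; the first comparison uses the Riesz-measure/counting-function correspondence and the doubling of $\widehat\omega$, the second is Fubini plus the fact that $\int_0^1 n(r)\widehat\omega(r)\,dr$ and $\int_0^1 N(r)\omega(r)\,dr$ differ by an integration by parts. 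Thus (iii)$\Leftrightarrow$(iv), closing the cycle.

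The main obstacle I expect is the (iii)$\Rightarrow$(i) step, and within it the need to keep the exceptional-set-free integrated logarithmic derivative estimate sharp enough that the error terms remain integrable against an arbitrary $\omega\in\mathcal D$ rather than a weight with prescribed regularity as in \cite[Theorem~7.9]{PR:2014}. Concretely, after expressing $A_j$ through logarithmic derivatives of quotients $g=f_1/f_2$ of solutions, one gets a bound of the form $\int_{D(0,r)}|A_j|^{1/(k-j)}\,dm\lesssim T(s(r),g)+s(r)\log\frac{e\,s(r)}{s(r)-r}$ with $s(r)=(1+r)/2$, and then $T(r,g)\lesssim N(r,1/f_1)+N(r,1/f_2)+\log(rT(r,g))+\Oh(1)$ via the second main theorem; the delicate point is that the $\log\frac{1}{1-r}$-type terms must be controlled by $\int_0^1\log\frac{1}{1-r}\,\omega(r)\,dr<\infty$, which is exactly guaranteed by $\omega\in\widecheck{\mathcal D}$ (this class forces $\widehat\omega(r)\gtrsim(1-r)^{\beta}$ for some $\beta$, making such logarithmic integrals converge). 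A secondary technical nuisance is passing from estimates at radius $s(r)=(1+r)/2$ back to radius $r$ after integrating against $\omega$, which is precisely where the doubling property $\widehat\omega(r)\leq C\widehat\omega((1+r)/2)$ of $\widehat{\mathcal D}$ is used, via a change of variables $r\mapsto(1+r)/2$ in the outer integral.
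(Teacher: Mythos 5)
Your overall architecture is right, and the (i)$\Rightarrow$(ii)$\Rightarrow$(iii) and (iii)$\Leftrightarrow$(iv) halves are essentially sound: Fubini converts $\int_\D |A_j|^{1/(k-j)}\widehat\omega\,dm$ into $\int_0^1(\int_{D(0,r)}|A_j|^{1/(k-j)}\,dm)\,\omega(r)\,dr$, the growth estimate from \cite{HKR:2004} then gives (ii)$\Rightarrow$(iii), and for (iv) you can in fact go directly by Fubini, $\sum_k\omega^\star(z_k)=\int_0^1 N(s,1/f)\,\omega(s)\,s\,ds$, which is comparable to $\int_0^1 N(r,1/f)\,\omega(r)\,dr$ without even invoking the $\widehat{\mathcal D}$ comparison $\omega^\star(z)\asymp\widehat\omega(z)(1-|z|)$ that you propose (the route through $n(r)$ is an unnecessary detour). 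The step that does not close as written is (iii)$\Rightarrow$(i), and the issue is more than a ``technical nuisance.''

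The exceptional-set-free estimate\eqref{eq:nice} gives a \emph{product}, not a sum: for $r'=0$, $\varrho=(1+r)/2$,
\begin{equation*}
\int_{D(0,r)}\left|\frac{g'(z)}{g(z)}\right|dm(z)\lesssim \varrho\,\log\frac{e\varrho}{\varrho-r}\,\big(T(\varrho,g)+1\big)\asymp \log\frac{1}{1-r}\cdot\big(T(s(r),g)+1\big),
\end{equation*}
and the logarithmic factor here is sharp and cannot be dropped (Example~2 in the paper exhibits $f$ with $T(r,f)\equiv 0$ and $\int_{|z|<r}|f'/f|\,dm\asymp\log\frac{1}{1-r}$). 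If you integrate this against $\omega(r)\,dr$ and change variables as you suggest, you are left needing $\int_0^1\log\frac{1}{1-r}\,T(r,g)\,\omega(r)\,dr<\infty$, which is genuinely stronger than the available hypothesis $\int_0^1 T(r,g)\,\omega(r)\,dr<\infty$; no amount of $\widehat{\mathcal D}$ or $\widecheck{\mathcal D}$ absorbs a multiplicative $\log\frac{1}{1-r}$ attached to $T$. This is precisely why the paper proves Lemma~\ref{lemma:BN} by a \emph{dyadic annular} decomposition rather than by integrating the full-disc estimate: one takes $\varrho_n$ with $\widehat\omega(\varrho_n)=\widehat\omega(0)/K^n$, and then the Theorem~\ref{thm:nn} bound over the annulus $\varrho_n\leq|z|<\varrho_{n+1}$ with $\varrho=\varrho_{n+2}$ produces the logarithmic factor $\log\frac{2e(\varrho_{n+2}-\varrho_n)}{\varrho_{n+2}-\varrho_{n+1}}$, and the whole point of assuming $\omega\in\mathcal D=\widehat{\mathcal D}\cap\widecheck{\mathcal D}$ is that it makes the ratio $(\varrho_{n+2}-\varrho_n)/(\varrho_{n+2}-\varrho_{n+1})$ uniformly bounded, so the harmful $\log$ becomes $O(1)$. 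The remaining additive $\log\frac{1}{\varrho_{n+2}-\varrho_{n+1}}$ and $T(\varrho_{n+2},f)$ terms are then summable against $\widehat\omega(\varrho_n)$ by comparing to $\int_0^1\log\frac{1}{1-s}\,\omega(s)\,ds$ and $\int_0^1 T(r,f)\,\omega(r)\,dr$ respectively. The three free parameters in Theorem~\ref{thm:nn} (annulus endpoints \emph{and} ambient radius) are what make this possible; a two-parameter disc estimate is not enough. You should restate your key bound in product form and replace the ``change of variables in the outer integral'' step by this dyadic argument.

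Two smaller points. Your claim that $\widecheck{\mathcal D}$ ``forces $\widehat\omega(r)\gtrsim(1-r)^\beta$'' is backwards: $\widecheck{\mathcal D}$ with $L>1$ gives an \emph{upper} bound $\widehat\omega(r)\lesssim(1-r)^\beta$ (it is $\widehat{\mathcal D}$ that gives the lower bound), and an upper bound on $\widehat\omega$ is indeed what one needs for $\int_0^1\log\frac1{1-r}\,\omega(r)\,dr<\infty$, so your conclusion is right but the stated reason is the wrong inequality. Finally, you were correct to reject the $\Psi(x)=x$ specialization of Theorem~\ref{thm:psi}; that failure is exactly why this theorem requires a separate proof and its own Lemma~\ref{lemma:BN}.
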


In Theorem~\ref{thm:ident} we may assume that possible value $z_k=0$ is removed from the zero-sequence.
Note that this result is not a~consequence of Theorem~\ref{thm:psi}, and vice versa. 
Roughly speaking Theorem~\ref{thm:ident} corresponds to the case $\Psi(x)=x$, which is excluded in 
Theorem~\ref{thm:psi}. 
Also Theorem~\ref{thm:psi} extends to cases which cannot be reached by \cite[Theorem~7.9]{PR:2014}.
We refer to the discussion in the end of \cite[Chapter~7]{PR:2014} for more details.

The counterpart of Theorem~\ref{thm:ident} for the complex plane 
is the case with polynomial coefficients, which is known by the existing literature~\cite{HR:2011}.
This is also the reason why Theorem~\ref{thm:ident} is restricted to~$\D$.

Our final result on differential equations 
is a~normed analogue of Theorem~\ref{thm:psi2}, and therefore its proof requires more detailed analysis.
It is based on another limsup-order, which is defined and discussed next.
Let $\Psi: [0,\infty)\to\R^+$ and $\varphi: (0,R) \to \R^+$ be continuous, increasing and unbounded functions,
where $0<R\leq\infty$. We define the $(\Psi,\varphi)$-order of a non-decreasing function $\psi: (0,R) \to \R^+$~by
\begin{equation*}
  \rho_{\Psi,\varphi}(\psi)=\limsup_{r\to R^-}\frac{\Psi(\log^+\psi(r))}{\log\varphi(r)}.
\end{equation*}
This generalizes the $\varphi$-order introduced in \cite{CHR:2009}.
If $f$ is meromorphic in $D(0,R)$, then the $(\Psi,\varphi)$-order of $f$ is defined
as $\rho_{\Psi,\varphi}(f)=\rho_{\Psi,\varphi}(T(r,f))$. If $a\in\widehat{\C}$, then
the $(\Psi,\varphi)$-exponent of convergence of the $a$-points of $f$ is defined as
$\lambda_{\Psi,\varphi}(a,f)=\rho_{\Psi,\varphi}(N(r,a,f))$.
These two concepts regarding $f$ reduce to the classical cases in the plane if $\Psi$ and $\varphi$
are identity mappings.

Compared to Theorems~\ref{thm:psi} and \ref{thm:psi2},
we suppose that $\Psi$ satisfies a~subadditivity type property 
\begin{equation}\label{kovaey}
\Psi(x+y)\leq \Psi(x)+\Psi(y)+O(1), 
\end{equation}
which is particularly true if $\Psi(x)=x$ or $\Psi(x)=\log^+ x$ corresponding to the usual order and the hyper order, respectively. 
%{\bf TODO: How about the powers of $\log(x)$?} 
In fact, if $\Psi$ is a~positive function such that $\Psi(x)/x$ is eventually
non-increasing, then $\Psi$ satisfies this subadditivity type property.
%\overline{This
%is due to the presence of the logarithm in the definition of the %$(\Psi,\varphi)$-order}.
This can be proved by writing $\Psi(x)=x \cdot (\Psi(x)/x)$, where $x$
is subadditive.
The auxiliary function $\phi$ gives us freedom to apply
the definition of $(\Psi,\varphi)$-order to different growth scales.
Since $T(r,fg)\leq 2\max\{T(r,f),T(r,g)\}$ and $T(r,f+g)\leq 2\max\{T(r,f),T(r,g)\}+\log 2$ for any meromorphic $f$ and $g$,
we conclude
\begin{equation}\label{W1}
  \begin{split}
    \rho_{\Psi,\varphi}(fg) &\leq \max\{\rho_{\Psi,\varphi}(f),\rho_{\Psi,\varphi}(g)\},\\
    \rho_{\Psi,\varphi}(f+g) &\leq \max\{\rho_{\Psi,\varphi}(f),\rho_{\Psi,\varphi}(g)\}.
  \end{split}
\end{equation}

Let $s:[0,R)\to [0,R)$ be an increasing function such that $s(r)\in (r,R)$
for $0\leq r<R$. 
Using the Gol'dberg-Grinshtein estimate \cite[Corollary~3.2.3]{CY:year}, we obtain
\begin{equation}\label{deri}
  T(r,f')\lesssim 1+\log^+\frac{s(r)}{r(s(r)-r)}
  %+\log^+T(s(r),f)
  +T(s(r),f).
\end{equation}
Suppose that $\varphi$ and $s$ are chosen such that %$\varphi(s(r))\lesssim \varphi(r)$. 
\begin{equation*} %\label{eq:derip}
    \limsup_{r\to R^-} \, \frac{\log\varphi(s(r))}{\log\varphi(r)} = 1
\end{equation*}
%If in addition $s(r)$ is chosen such that
and
\begin{equation} \label{eq:eeee}
    %\mu:=\max\left\{\rho_{\Psi,\varphi}(r),\rho_{\Psi,\varphi}\left(\log\frac{s(r)}{s(r)-r}\right)\right\}<\infty.
    \rho_{\Psi,\varphi}\left(\log^+\frac{s(r)}{r(s(r)-r)}\right) = 0.
    \end{equation}
%the right-hand side of \eqref{deri} is of the growth $O(T(s(r),f))$. 
Then
\begin{equation}\label{W2}
  \rho_{\Psi,\varphi}(f') \leq \rho_{\Psi,\varphi}(f).
\end{equation}
The condition~\eqref{eq:eeee} is trivial for standard choices in the plane and in the disc $D(0,R)$, respectively.
%The latter condition happens for the choice $s(r)=2r$ in the case of plane, provided $f$ is transcendental,
%and for the choice $s(r)=(1+r)/2$ in the unit disc, provided $f$ is admissible.
%The choices $s(r)=r+1/T(r,f)$ and $s(r)=r+(1-r)/eT(r,f)$, familiar from Borel's lemma, 
%%make these minimal growth requirements unnecessary for 
%are suitable for functions of strictly positive characteristic.

The validity of the reverse inequality $\rho_{\Psi,\varphi}(f) \leq \rho_{\Psi,\varphi}(f')$ is
based on similar discussions as above and on the estimate
\begin{equation*}
  T(r,f)\lesssim \frac{s(r)}{s(r)-r}\left(\log \frac{2s(r)}{s(r)-r}\right)\big(T(s(r),f')+1\big)+\log^+ r
\end{equation*}
by Chuang \cite{Chuang}.
Regarding our applications, this reverse estimate is not needed.

Theorem~\ref{limsup-thm} below generalizes the main results in  \cite{CHR:2009} and \cite{HR:2011} to some extent.
%Note that in the case of polynomial coefficients, the %usual order of a solution can be 
%strictly less than one, while the sum of orders of %solutions in any fundamental system is 
%at least the order of the differential equation in %question \cite{GSW:1998}. This means that there 
%is always a solution whose usual order is $\geq 1$. 
%In \cite[Theorem 1.3]{HR:2011} this arises in the form %$\lambda\geq 1$. One can re-write this as
%%\begin{equation*}
%%  \lambda\geq \rho(r)
%%  =\max\{\rho(r),\rho(\log r)\},
%%\end{equation*}
%where $\rho(g(r))$ is the usual order of the function %$g(r)$. An analogous situation appears in the following %result as well.

%%%%%%%%%%%%%%%%%%%%%%%%
%%%% ---- THEOREM ---- %%%%
%%%%%%%%%%%%%%%%%%%%%%%%

\begin{theorem}\label{limsup-thm}
Suppose that $\Psi$, $\varphi$ and $s$ are functions as above such that \eqref{W1} and \eqref{W2} hold, but \eqref{eq:eeee} is replaced by the~stronger condition 
    \begin{equation} \label{eq:eee}
    %\mu:=\max\left\{\rho_{\Psi,\varphi}(r),\rho_{\Psi,\varphi}\left(\log\frac{s(r)}{s(r)-r}\right)\right\}<\infty.
    \rho_{\Psi,\varphi}\left(\frac{s(r)}{r} \, \log\frac{e\, s(r)}{s(r)-r}\right) = 0.
    \end{equation}
In addition, we suppose $\rho_{\Psi,\varphi}(\log^+ r)=0$ and $\Psi(\log x) = o(\Psi(x))$ as $x\to\infty$.
Let $\lambda\geq 0$.
If the coefficients $A_0,\dotsc,A_{k-2}$ are analytic in $D(0,R)$, then the following conditions are equivalent:
\begin{itemize}[leftmargin=0.75cm]
\item[\rm (i)] $\rho_{\Psi,\varphi}\left(\frac{1}{r} \int_{D(0,r)}|A_j(z)|^\frac{1}{k-j}\, dm(z)\right)\leq\lambda$ for all $j=0,\ldots,k-2$;
\item[\rm (ii)] $\rho_{\Psi,\varphi}(f)\leq\lambda$ for all solutions $f$ of \eqref{ldek_gen};
\item[\rm (iii)] $\lambda_{\Psi,\varphi}(0,f)\leq\lambda$
and $\rho_{\Psi,\varphi}(f)< \infty$
for all non-trivial solutions $f$ of \eqref{ldek_gen}.
\end{itemize}
Moreover, if there exists a function for which the equality holds in any of the three
inequalities above, then there exist appropriate functions such that the equalities hold in the
remaining two inequalities.
\end{theorem}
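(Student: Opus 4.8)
The plan is to follow the cyclic implication scheme $(i)\Rightarrow(ii)\Rightarrow(iii)\Rightarrow(i)$, transporting the integrated estimates from the proof of Theorem~\ref{thm:psi} to the $(\Psi,\varphi)$-order setting. The implication $(i)\Rightarrow(ii)$ is the most direct: by the growth estimates for solutions of \eqref{ldek_gen} from \cite{HKR:2004}, any solution $f$ satisfies $T(r,f)\lesssim 1+\sum_{j=0}^{k-2}\int_{D(0,r)}|A_j(z)|^{\frac{1}{k-j}}\,dm(z)$, so by \eqref{W1}, the assumption $\rho_{\Psi,\varphi}(\log^+r)=0$, and monotonicity of $\Psi$, one reads off $\rho_{\Psi,\varphi}(f)\leq\max_j\rho_{\Psi,\varphi}\big(r\cdot\frac1r\int_{D(0,r)}|A_j|^{\frac1{k-j}}dm\big)\leq\max\{\lambda,\rho_{\Psi,\varphi}(\log^+r)\}=\lambda$, using $\Psi(\log x)=o(\Psi(x))$ to absorb the logarithmic factor $r$ into the leading term. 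The implication $(ii)\Rightarrow(iii)$ is the easy half of the equivalence between growth and oscillation: a~solution base $f_1,\dots,f_k$ of \eqref{ldek_gen} has $f_1\cdots f_k$ of $(\Psi,\varphi)$-order at most $\lambda$ by \eqref{W1}, and since each non-trivial solution is a~linear combination of the $f_i$, its zeros are among the zeros of the product, whence $\lambda_{\Psi,\varphi}(0,f)=\rho_{\Psi,\varphi}(N(r,1/f))\leq\rho_{\Psi,\varphi}(N(r,1/(f_1\cdots f_k)))\leq\rho_{\Psi,\varphi}(f_1\cdots f_k)\leq\lambda$ by the first main theorem, while $\rho_{\Psi,\varphi}(f)<\infty$ is immediate from $(ii)$.

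The substantial implication is $(iii)\Rightarrow(i)$. Here I~would fix a~fundamental system $f_1,\dots,f_k$ and form the quotients $g_i=f_i/f_1$; assumption $(iii)$ together with the second main theorem of Nevanlinna controls $T(r,g_i)$ in terms of $N(r,g_i)$, $N(r,1/g_i)$ and $N(r,1/(g_i-1))$, each of which is dominated by the zero-counting functions of the solutions, hence has $(\Psi,\varphi)$-order at most $\lambda$; a~careful accounting of the error term in the second main theorem (which is $O(\log T(r,\cdot)+\log\frac1{R-r})$ up to an~exceptional set, handled via the standard lemma on removing exceptional sets for a~$\limsup$-type order, together with $\rho_{\Psi,\varphi}(\log^+r)=0$ and $\Psi(\log x)=o(\Psi(x))$) shows $\rho_{\Psi,\varphi}(g_i)\leq\lambda$. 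Then the classical representation theorem \cite{K:1969} expresses each coefficient $A_j$ as a~polynomial in the logarithmic derivatives $(g_i^{(m)}/g_i^{(n)})$ (or, in the standard normalization, via Wronskian ratios), so that $|A_j|^{\frac1{k-j}}$ is bounded by a~sum of products of such logarithmic-derivative factors. Integrating over $D(0,r)$ and invoking the accurate integrated logarithmic derivative estimate (the results of Section~\ref{sec:logdva}, which bound $\int_{r'<|z|<r}|f^{(k)}/f^{(j)}|^{1/(k-j)}dm$ with no exceptional set by an~expression in $T(s(r),f)$ and $\frac{s(r)}{s(r)-r}\log\frac{es(r)}{s(r)-r}$) converts these into $\frac1r\int_{D(0,r)}|A_j|^{\frac1{k-j}}dm\lesssim \big(\text{products of }T(s(r),g_i)\text{ and the }s\text{-factor}\big)$. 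Taking $\Psi(\log^+\cdot)/\log\varphi(r)$ and passing to the $\limsup$, the subadditivity-type hypothesis \eqref{kovaey} turns the product/sum structure into a~max of the individual $(\Psi,\varphi)$-orders; the factor involving $\frac{s(r)}{r}\log\frac{es(r)}{s(r)-r}$ contributes $0$ by the strengthened hypothesis \eqref{eq:eee}, and $\limsup\frac{\log\varphi(s(r))}{\log\varphi(r)}=1$ lets us replace $T(s(r),g_i)$ by $T(r,g_i)$ inside the order without loss. This yields $(i)$ with the same $\lambda$.

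The \textbf{main obstacle} I~anticipate is precisely the error-term bookkeeping in the $(iii)\Rightarrow(i)$ step: one must simultaneously (a)~absorb the Nevanlinna second-main-theorem remainder and the logarithmic-derivative remainder $\frac{s(r)}{r}\log\frac{es(r)}{s(r)-r}$ into objects of $(\Psi,\varphi)$-order zero, which is exactly why the hypotheses $\rho_{\Psi,\varphi}(\log^+r)=0$, $\Psi(\log x)=o(\Psi(x))$, \eqref{eq:eee} and $\limsup\frac{\log\varphi(s(r))}{\log\varphi(r)}=1$ are imposed and must be used in concert; (b)~remove the Nevanlinna exceptional set, for which the finiteness $\rho_{\Psi,\varphi}(f)<\infty$ in $(iii)$ is indispensable (mirroring the role of the analogous finiteness assumption in \cite{CHR:2009}, \cite{HR:2011}); and (c)~propagate the subadditivity \eqref{kovaey} through the (finitely many) products arising from the representation theorem without degrading the bound on $\lambda$. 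The final assertion about transfer of equalities follows by re-running the three implications while tracking that each inequality is in fact an~equality: the estimates used are two-sided up to lower-order terms of $(\Psi,\varphi)$-order zero — the reverse growth estimate relating $T(r,f)$ from below to the coefficients via \cite{HKR:2004} is built into the setup, and the reverse direction $\rho_{\Psi,\varphi}(f)\leq\rho_{\Psi,\varphi}(f')$ available through Chuang's estimate supplies the remaining matching bound — so equality in one of the three inequalities forces equality in the other two after choosing the relevant extremal auxiliary functions.
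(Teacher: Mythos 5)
Your proposal follows the same roadmap as the paper (cyclic scheme via \cite{HKR:2004} for (i)$\Rightarrow$(ii), Kim's representation \cite{K:1969} together with the logarithmic derivative estimates of Section~\ref{sec:logdva} and the second main theorem for (iii)$\Rightarrow$(i), quotients of a fundamental system, and absorption of the error terms using \eqref{eq:eee}, $\rho_{\Psi,\varphi}(\log^+r)=0$ and $\Psi(\log x)=o(\Psi(x))$). Two points deserve correction.

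First, your argument for (ii)$\Rightarrow$(iii) is wrong as stated: you claim that since a non-trivial solution $f$ is a linear combination of $f_1,\dotsc,f_k$, ``its zeros are among the zeros of the product $f_1\cdots f_k$.'' This is false — a zero of $\sum c_if_i$ need not be a zero of any $f_i$. The step is in fact trivial for a different reason: by the first main theorem $N(r,1/f)\le T(r,1/f)=T(r,f)+O(1)$, so $\lambda_{\Psi,\varphi}(0,f)\le\rho_{\Psi,\varphi}(f)\le\lambda$, and $\rho_{\Psi,\varphi}(f)<\infty$ is immediate. The paper simply notes (ii)$\Rightarrow$(iii) is trivial for this reason; there is no need to invoke the product of a basis.

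Second, your ``main obstacle (b)'' about removing a Nevanlinna exceptional set is a slight mismatch with the paper's actual design: the paper deliberately uses the Gol'dberg--Grinshtein form of the logarithmic derivative estimate in the second main theorem (which introduces the term $\log^+\frac{s(r)}{r(s(r)-r)}+\log^+T(s(r),y_j)$ \emph{without} an exceptional set), and Corollary~\ref{cor:nn} / Lemma~\ref{mero-coeffs} for the coefficient bounds are likewise exceptional-set-free. There is therefore no exceptional-set removal step at all; the role of $\rho_{\Psi,\varphi}(f)<\infty$ in (iii) is instead to ensure $\rho_{\Psi,\varphi}(y_j)<\infty$, which is needed so that $\Psi(\log\log T(s(r),y_j))/\log\varphi(r)\to 0$ (this is where $\Psi(\log x)=o(\Psi(x))$ and $\limsup_{r\to R^-}\frac{\log\varphi(s(r))}{\log\varphi(r)}=1$ are used in concert). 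You correctly identified the hypotheses but slightly misattributed their function. The final ``equalities transfer'' assertion is handled in the paper by the simple contradiction argument (if equality holds in one but a strict inequality in another, then applying the proved equivalences with a smaller $\lambda$ yields a strict inequality in all three, contradiction); your reliance on two-sided estimates and Chuang's reverse inequality is neither needed nor used by the paper.
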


Note the following observations regarding Theorem~\ref{limsup-thm}.

(a) Assumption~\eqref{eq:eee} restricts the possible values of $s(r)$. It requires that
$s(r)$ cannot be significantly larger than $r$, and at the same time, $s(r)-r$ cannot
be too small. For example, the choices $s(r)=c r$ and $s(r)=r(\log r)^\alpha$ are allowed
in the classical setting of the complex plane for any $c>1$ and $\alpha>0$.

(b) The assumption $\rho_{\Psi,\varphi}(\log^+ r)=0$
is trivial if $R<\infty$, while if $R=\infty$ it
is equivalent to saying that all rational functions
are of $(\Psi,\varphi)$-order zero.

(c)
By a~careful inspection of the proof of Theorem~\ref{limsup-thm}, we see
that the assumptions can be significantly relaxed if 
the quantities in (i), (ii) and (iii) are required to be simultaneously either finite or infinite. 
First, \eqref{kovaey}
can be
relaxed to $\Psi(x+y) \lesssim \Psi(x) + \Psi(y) + 1$, which is 
satisfied, for instance, by 
$\Psi(x)=x^\alpha$ for $\alpha>1$.
Then analogues of \eqref{W1} and \eqref{W2} hold, where the inequality sign $\leq$ is replaced by $\lesssim$.
Second, instead of \eqref{eq:eee} and $\rho_{\Psi,\varphi}(\log^+r)=0$, it suffices to require that the orders in question are finite. In this case the $\rho_{\Psi,\varphi}$-order can be chosen to be the logarithmic order in the finite disc and in the complex plane.

%%%%%%%%%%%%%%%%%%%%%%%%%%
%%%% ---- SECTION ---- %%%%
%%%%%%%%%%%%%%%%%%%%%%%%%%

\section{Results on logarithmic derivatives} \label{sec:logdva}

Our results on differential equations are based on new estimates on logarithmic derivatives of meromorphic functions.

%%%%%%%%%%%%%%%%%%%%%%%%
%%%% ---- THEOREM ---- %%%%
%%%%%%%%%%%%%%%%%%%%%%%%

\begin{theorem} \label{thm:nn}
Let $0<\varrho<\infty$
and  $f\not\equiv 0$ meromorphic in
a~domain containing $\overline{D(0,\varrho)}$. Then there exists
a~positive constant $C$,
which depends only on the initial values of $f$ at the origin, such that
\begin{equation*}
  \begin{split}
    &  \int_{r'<|z|<r}  \left| \frac{f'(z)}{f(z)}  \right| \, dm(z)\\
    & \qquad \lesssim \Bigg( 4 \varrho \frac{r-r'}{\varrho-r'} \left( 2 + \log 2 + \log \frac{\varrho-r'}{r-r'} \right) 
    +  (2\pi + 2)(r-r') + 3\varrho \log\frac{\varrho-r'}{\varrho-r} \Bigg) \\
    & \qquad \qquad \times \big( 2 T(\varrho,f) + C \big),
    \quad 0\leq r' < r<\varrho.
  \end{split}
\end{equation*}
%where $O(1)$ is a bounded term independent of $r',r,\varrho$.
\end{theorem}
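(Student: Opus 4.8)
The plan is to start from the classical representation of $f'/f$ as a Cauchy-type integral over a circle of radius $\varrho$ plus a sum over the zeros and poles of $f$ inside $D(0,\varrho)$. Concretely, for $|z|<\varrho$ one has the Poisson--Jensen-type formula
\[
  \frac{f'(z)}{f(z)} = \frac{1}{2\pi}\int_0^{2\pi} \frac{2\varrho e^{i\theta}}{(\varrho e^{i\theta}-z)^2}\,\log|f(\varrho e^{i\theta})|\,d\theta
  + \sum_{|a_\mu|<\varrho}\left(\frac{1}{z-a_\mu} - \frac{\overline{a_\mu}}{\varrho^2 - \overline{a_\mu}z}\right)
  - \sum_{|b_\nu|<\varrho}\left(\frac{1}{z-b_\nu} - \frac{\overline{b_\nu}}{\varrho^2 - \overline{b_\nu}z}\right),
\]
where $a_\mu$ are the zeros and $b_\nu$ the poles of $f$ (with multiplicity). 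The strategy is then to integrate the modulus of the right-hand side over the annulus $\{r'<|z|<r\}$, handle the Cauchy kernel term and the ``mirror'' (reflected) terms by a crude supremum bound, and treat the genuinely singular local terms $\sum 1/(z-a_\mu)$ and $\sum 1/(z-b_\nu)$ by the key observation that the area integral of $|z-a|^{-1}$ over a thin annulus is controlled regardless of where $a$ sits, so no exceptional set is needed.

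First I would bound the kernel term: for $|z|\le r<\varrho$ we have $|\varrho e^{i\theta}-z|\ge \varrho - r$, so $\big|\tfrac{2\varrho e^{i\theta}}{(\varrho e^{i\theta}-z)^2}\big|\le \tfrac{2\varrho}{(\varrho-r)^2}$, and integrating $\log|f(\varrho e^{i\theta})|$ in $\theta$ produces a quantity $\lesssim T(\varrho,f)$ after adding the contribution from the poles; multiplying by the area $\pi(r^2-r'^2)\le 2\pi\varrho(r-r')$ of the annulus gives a term of the desired shape. However, this naive bound produces $(\varrho-r)^{-2}$ rather than the much milder $\log\frac{\varrho-r'}{\varrho-r}$ appearing in the statement, so the better route is to integrate in polar coordinates and use that $\int_{r'}^r \frac{\varrho}{(\varrho - t)^2}\,t\,dt \asymp \varrho\,\frac{r-r'}{(\varrho-r)}$, or more carefully split so that the logarithmic gain $\log\frac{\varrho-r'}{\varrho-r}$ emerges from the radial integral of $\frac{1}{\varrho - t}$. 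Second, for the local pole/zero sums, I would integrate term by term: for a single point $a$ with $|a|<\varrho$,
\[
  \int_{r'<|z|<r}\frac{dm(z)}{|z-a|} \le \int_{|w|< r-r'}\frac{dm(w)}{|w|} + (\text{a bounded tail}) \lesssim (r-r')\Big(1+\log\tfrac{\varrho-r'}{r-r'}\Big),
\]
and the number of such points is $n(\varrho,f)+n(\varrho,1/f)\lesssim T(\varrho,f)+C$ by the standard Jensen estimate $n(r,\cdot)\le \frac{1}{\log 2}\big(N(2r,\cdot)\big)$-type bound relating counting functions to $T(\varrho,\cdot)$ — this is where the dependence on the initial values of $f$ enters, through $\log|f(0)|$ (or the leading Laurent coefficient if $f(0)\in\{0,\infty\}$). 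Collecting, each of the at most $\lesssim T(\varrho,f)+C$ points contributes a bounded multiple of $(r-r')\big(1+\log\frac{\varrho-r'}{r-r'}\big)$, matching the $4\varrho\frac{r-r'}{\varrho-r'}(2+\log 2+\log\frac{\varrho-r'}{r-r'})$ term up to the harmless factor $\varrho/(\varrho-r')\ge 1$, while the reflected terms $\frac{\overline{a}}{\varrho^2-\overline a z}$ are bounded by $\frac{\varrho}{\varrho^2-\varrho r}=\frac{1}{\varrho-r}$ in modulus and, integrated radially, again produce the $\varrho\log\frac{\varrho-r'}{\varrho-r}$ term.

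The main obstacle will be bookkeeping the constants so that the three structurally different contributions — the boundary Cauchy integral, the singular local sum, and the reflected sum — all fold into the single displayed right-hand side with its specific coefficients $4\varrho$, $2\pi+2$, $3\varrho$, and the specific logarithmic arguments; in particular, squeezing the $\log\frac{\varrho-r'}{\varrho-r}$ (rather than a power of $(\varrho-r)^{-1}$) out of the boundary term and the reflected term requires doing the radial integration carefully rather than bounding the integrand by its supremum on the annulus. A secondary point of care is the normalization when $f$ has a zero or pole at the origin: one replaces $f$ by $z^{-m}f(z)$ or uses the generalized Jensen formula, which only shifts the additive constant $C$ and the $\log^+ r'$-type terms already absorbed in the $(2\pi+2)(r-r')$ summand. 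Everything else is a routine estimation of area integrals of $|z-a|^{-1}$ over annuli, which is exactly the mechanism that lets us avoid any exceptional set in $r$.
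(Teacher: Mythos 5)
Your plan starts the same way as the paper—differentiating the Poisson--Jensen formula and using Fubini to reduce everything to area integrals over the annulus—and your treatment of the boundary integral (radial integration of $1/(\varrho-t)$ giving the $\log\frac{\varrho-r'}{\varrho-r}$ term rather than a supremum bound) is essentially what the paper does. But the way you handle the pole/zero sum has a genuine gap that you cannot repair within the hypotheses.

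You split the $m$-th term of the sum into a ``singular'' piece $\frac{1}{z-c_m}$ and a ``reflected'' piece $\frac{\overline{c}_m}{\varrho^2-\overline{c}_m z}$, estimate each area integral separately, and then multiply by the number of points via $n(\varrho,f)+n(\varrho,1/f)\lesssim T(\varrho,f)+C$. That last step is false: $n(\varrho,\cdot)$ is \emph{not} bounded by $T(\varrho,\cdot)$. The Jensen-type estimate you allude to reads $n(\varrho,\cdot)\leq \frac{1}{\log(\varrho'/\varrho)}\,N(\varrho',\cdot)\leq \frac{1}{\log(\varrho'/\varrho)}\,T(\varrho',\cdot)$ and requires a \emph{strictly larger} radius $\varrho'>\varrho$; you even write $N(2r,\cdot)$, which is $T$ at twice the radius. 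But the theorem is stated with $T(\varrho,f)$, and $f$ is only assumed meromorphic in a domain containing $\overline{D(0,\varrho)}$, so $T(2\varrho,f)$ need not even be defined. Pushing your argument through would at best prove the estimate with $T(\varrho',f)$ and a blow-up factor $1/\log(\varrho'/\varrho)$, which is a strictly weaker statement.

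The paper avoids this by \emph{not} splitting the singular and reflected factors: it keeps the combined term
$\frac{\varrho^2-|c_m|^2}{|z-c_m|\,|z-\varrho^2/\overline{c}_m|}$,
where the numerator $\varrho^2-|c_m|^2$ vanishes as $|c_m|\to\varrho$. After carefully estimating the area integral of the denominator (this is what Lemmas~\ref{lemma:pq} and \ref{lemma:intest} are for), the sum over $c_m$ is handled by writing it as a Riemann--Stieltjes integral $\int_\varepsilon^\varrho(\varrho^2-t^2)\,dn(t)$ and integrating by parts, which produces $\int_\varepsilon^\varrho \frac{n(t)}{t}\,dt = N(\varrho)-N(\varepsilon)\lesssim T(\varrho,f)+O(1)$. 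The weight $\varrho^2-|c_m|^2$ is exactly what lets one land on the \emph{integrated} counting function $N(\varrho)$ (controlled by $T(\varrho)$) instead of the unintegrated $n(\varrho)$ (not controlled by $T(\varrho)$). Once you discard that weight by separating the two factors, the mechanism is gone. So you would need to abandon the termwise split and work with the product form, and replace ``count the points'' with a partial-summation argument using $n(t)$; as written, the proposal does not establish the theorem.
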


The term
\begin{equation*}
  \frac{r-r'}{\varrho-r'} \left( 2 + \log 2 + \log \frac{\varrho-r'}{r-r'} \right) 
\end{equation*}
appearing in Theorem~\ref{thm:nn}
is uniformly bounded above by $2+ \log{2}$ for all $0\leq r'<r<\varrho$, and it decays to zero as~$r'\to r$.
Therefore Theorem~\ref{thm:nn} yields
\begin{equation} \label{eq:nice}
  \begin{split}
    \int_{r'<|z|<r}  \left| \frac{f'(z)}{f(z)}  \right| \, dm(z)
    & \lesssim \varrho \log\frac{e(\varrho-r')}{\varrho-r} \, \big( T(\varrho,f) + 1 \big),
    \quad 0\leq r'<r<\varrho.
  \end{split}
\end{equation}
The following examples illustrate the sharpness of \eqref{eq:nice}.

%%%%%%%%%%%%%%%%%%%%%%%%
%%%% ---- EXAMPLE ---- %%%%
%%%%%%%%%%%%%%%%%%%%%%%%

\begin{example}
Let $f(z)=\exp(z^n)$ for $z\in\C$, and $\varrho=2r$. By a straight-forward computation, $T(r,f)=r^n/(n\pi)$ for
$0<r<\infty$. Now
\begin{equation*}
  \int_{|z|<r} \bigg| \frac{f'(z)}{f(z)} \bigg| \, dm(z) =  2\pi n  \int_0^r t^n \, dt = \frac{2\pi n}{n+1} \, r^{n+1},
  \quad 0<r<\infty,
\end{equation*}
while
\begin{equation*}
  \varrho \, \log\frac{e\varrho}{\varrho-r} \, \left(  T(\varrho,f) + 1\right)
  = 2r \big( 1+ \log 2 \big) \left( \frac{2^n r^n}{n\pi} + 1\right),
\quad 0<r<\infty.
\end{equation*}
This shows that the leading $\varrho$ in \eqref{eq:nice} cannot be removed.
\end{example}

%%%%%%%%%%%%%%%%%%%%%%%%
%%%% ---- EXAMPLE ---- %%%%
%%%%%%%%%%%%%%%%%%%%%%%%

\begin{example}
Let $f(z)=\exp(-(1+z)/(1-z))$ for $z\in\D$, and $\varrho=(1+r)/2$. By a~straight-forward computation, 
$T(r,f)=0$ for $0<r<1$. Now
\begin{equation*}
  \int_{|z|<r} \bigg| \frac{f'(z)}{f(z)} \bigg| \, dm(z) 
  = \int_{|z|<r} \frac{2}{|1-z|^2} \, dm(z) 
  = 2\pi \log\frac{1}{1-r^2}, \quad 0<r<1,
\end{equation*}
while
\begin{equation*}
  \varrho \, \log\frac{e\varrho}{\varrho-r} \, \left(  T(\varrho,f) + 1\right)
  = \frac{1+r}{2} \, \log \frac{e(1+r)}{1-r}, \quad 0<r<1.
\end{equation*}
This shows that the logarithmic term in \eqref{eq:nice} cannot be removed.
\end{example}

In the special case when $\rho/ r'$ is uniformly bounded an equivalent 
estimate (up to a constant factor) is obtained in \cite{Bar} and \cite{ChyKol}.
In fact, a~much more general class of functions is considered in \cite{ChyKol}. 
These results imply
\begin{equation*}
  \int_{r'<|z|<r} \left|\frac{f'(z)}{f(z)} \right| \, dm(z) \lesssim  \rho \, \frac{r-r'}{\rho-r} \, T(\rho, f),
  \quad 0\leq r'<r<\varrho.
\end{equation*}
On the other hand,  Gol'dberg and  Strochik \cite[Theorem 7]{GoStr85} established a general upper 
estimate for the integral of the logarithmic derivative over a region of the form 
$\{te^{i\varphi}: r'<t<r, \, \varphi\in E(t)\}$, where $E=E(t)$ is a measurable subset of 
$[0, 2\pi]$ with $m(E)\le \theta\in (0, 2\pi]$. %, $r_1<t<r_2$. 
This estimate  allows 
arbitrary values $r'<r<\rho$, and takes into account the measure of $E$.
Nevertheless, if  $\rho/r'$ tends to infinity, $r\asymp r'$ and $\mes E=2\pi$, then
Theorem~\ref{thm:nn} improves all known results giving
\begin{equation*}
  \int_{r'<|z|<r} \left|\frac{f'(z)}{f(z)} \right| \, dm(z) \lesssim    (r  -r') \, T(\rho, f).
\end{equation*}

We proceed to consider two consequences of Theorem~\ref{thm:nn}, the first of which
concerns generalized logarithmic derivatives.

%%%%%%%%%%%%%%%%%%%%%%%%
%%%% ---- COROLLARY ---- %%%%
%%%%%%%%%%%%%%%%%%%%%%%%

\begin{corollary} \label{cor:nn}
Let $0<R<\infty$ and $f$
meromorphic in a~domain containing $\overline{D(0,R)}$.
Suppose that $j,k$ are integers with $k>j\geq 0$, and $f^{(j)}\not\equiv 0$.
Then
\begin{align*}
  &  \int_{r'<|z|<r}  \bigg| \frac{f^{(k)}(z)}{f^{(j)}(z)}  \bigg|^{\frac{1}{k-j}} \, dm(z) \notag\\ 
  & \qquad \lesssim R \, \log\frac{e\, (R-r')}{R-r} \, \left(  1 + \log^+ \frac{1}{R-r} + T(R,f)\right)
%  \quad.
\end{align*}
for $0\leq r' < r<R$.
\end{corollary}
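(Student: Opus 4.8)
The plan is to reduce the estimate for the generalized logarithmic derivative $f^{(k)}/f^{(j)}$ to the basic first-order case handled by Theorem~\ref{thm:nn}, via the telescoping identity
\begin{equation*}
  \frac{f^{(k)}}{f^{(j)}} = \prod_{i=j}^{k-1} \frac{f^{(i+1)}}{f^{(i)}}.
\end{equation*}
Taking $(k-j)$-th roots and applying the arithmetic–geometric mean inequality (or, more simply, the elementary bound $\prod |a_i|^{1/m} \le \frac1m \sum |a_i|$ valid when there are $m=k-j$ factors and using $|a_i|^{1/m}\le 1 + |a_i|$... in fact the clean route is $\big(\prod_{i} |b_i|\big)^{1/m} \le \frac1m\sum_i |b_i|$ applied to $b_i = |f^{(i+1)}/f^{(i)}|$), we obtain
\begin{equation*}
  \bigg| \frac{f^{(k)}(z)}{f^{(j)}(z)} \bigg|^{\frac{1}{k-j}}
  \le \frac{1}{k-j} \sum_{i=j}^{k-1} \left| \frac{f^{(i+1)}(z)}{f^{(i)}(z)} \right|,
\end{equation*}
so that $\int_{r'<|z|<r} |f^{(k)}/f^{(j)}|^{1/(k-j)}\,dm$ is controlled by a finite sum of integrals of ordinary logarithmic derivatives $|g'/g|$ with $g = f^{(i)}$, $j\le i\le k-1$.

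Next I would apply the clean consequence \eqref{eq:nice} of Theorem~\ref{thm:nn} to each $g = f^{(i)}$ with a fixed intermediate radius, say $\varrho = (r+R)/2 \in (r,R)$, giving
\begin{equation*}
  \int_{r'<|z|<r} \left| \frac{g'(z)}{g(z)} \right| dm(z)
  \lesssim \varrho \, \log\frac{e(\varrho-r')}{\varrho-r} \,\big( T(\varrho, f^{(i)}) + 1 \big).
\end{equation*}
Then I need two routine reductions. First, $\varrho \le R$ and, since $\varrho - r = (R-r)/2$ and $\varrho - r' \le R$, the factor $\varrho\log\tfrac{e(\varrho-r')}{\varrho-r}$ is $\lesssim R\log\tfrac{2e(R-r')}{R-r} \lesssim R\log\tfrac{e(R-r')}{R-r}$ (absorbing the extra constant $2$ inside the log into the comparison constant, using $R<\infty$). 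Second, I must pass from $T(\varrho, f^{(i)})$ to $T(R,f)$: by the standard growth estimate for derivatives — precisely the Gol'dberg–Grinshtein bound \eqref{deri} used earlier (or its $T(r,f^{(m)})$ iterate) — one has, for $\varrho < \varrho' = (\varrho+R)/2 < R$,
\begin{equation*}
  T(\varrho, f^{(i)}) \lesssim 1 + \log^+\frac{1}{\varrho'-\varrho} + T(\varrho', f) \lesssim 1 + \log^+\frac{1}{R-r} + T(R,f),
\end{equation*}
where $\varrho' - \varrho = (R-r)/4$ produces the $\log^+\tfrac1{R-r}$ term and monotonicity of $T(\cdot,f)$ gives $T(\varrho',f)\le T(R,f)$; iterating $i-j \le k$ times only changes constants. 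Summing over the finitely many $i$ and combining yields exactly the claimed bound.

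The main obstacle — and the only genuinely non-mechanical point — is the transition from $T(\varrho, f^{(i)})$ to $T(R, f)$ with a controlled error term. Repeated differentiation forces repeated shrinking of the radius gap, and one must be careful that each application of the logarithmic-derivative-type growth estimate eats only a constant fraction of the remaining interval $(r,R)$ so that after $k$ steps the total loss is still $\lesssim \log^+\tfrac1{R-r}$ rather than something worse; choosing a geometric sequence of radii $r < \varrho_0 < \varrho_1 < \dots < R$ with $\varrho_{m+1}-\varrho_m \asymp (R-r)$ handles this cleanly. Everything else — the algebraic identity, the AM–GM step, absorbing constants into $\lesssim$, and the elementary manipulations of $\log\tfrac{e(R-r')}{R-r}$ — is routine. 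One should also note the hypotheses $f^{(j)}\not\equiv0$ guarantees $f^{(i)}\not\equiv0$ for all $j\le i\le k-1$ (if $f^{(i)}\equiv0$ for some $i>j$ then $f^{(k)}\equiv0$ too and the left-hand side is zero, so the estimate is trivial in that degenerate case), and that $0<R<\infty$ is what allows the $\log^+\tfrac1{R-r}$ and the constant-$2$-inside-log absorptions; these deserve a one-line remark but pose no difficulty.
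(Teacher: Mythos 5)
Your argument is correct and follows essentially the same route as the paper: telescope $f^{(k)}/f^{(j)}$ into a product of first-order logarithmic derivatives, apply the basic estimate \eqref{eq:nice} at the intermediate radius $\varrho=(r+R)/2$, and iterate the Gol'dberg--Grinshtein type bound \eqref{deri} over a geometric sequence of radii to replace each $T(\varrho,f^{(m)})$ by $1+\log^+\frac{1}{R-r}+T(R,f)$. The only (cosmetic) difference is that you use the AM--GM bound $\prod_i|b_i|^{1/m}\le\frac1m\sum_i|b_i|$ to distribute the integral over the factors, where the paper applies H\"older's inequality $\int\prod_i|b_i|^{1/m}\,dm\le\prod_i\bigl(\int|b_i|\,dm\bigr)^{1/m}$.
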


A~standard reasoning based on Borel's lemma transforms $R$ back to $r$. 
In the case of $\D$, the choice $R = r + (1-r)/T(r,f)$ implies 
\begin{equation*}
  T(R,f) \leq 2 T(r,f)\quad \text{and} \quad \log\frac{eR}{R-r} = \log \left( e + \frac{er \, T(r,f)}{1-r} \right),
\end{equation*}
the~inequality being valid outside of a~possible exceptional set $E\subset [0,1)$
such that $\int_E dr/(1-r) < \infty$. In the case of $\C$, the choice $R=r+1/(eT(r,f))$ implies
%$T(R,f) \leq 2 T(r,f)$ and
\begin{equation*}
T(R,f) \leq 2 T(r,f) \quad \text{and} \quad 
  \log\frac{eR}{R-r} 
  = \log \big( e + er \, T(r,f) \big),
%  = \log \big( R \, T(r,f) \big) 
%  = \log \big( r \, T(r,f) + 1 \big) = \log \big( r \, T(r,f) \big) + o(1),
\end{equation*}
the~inequality being valid outside a~possible exceptional set $E\subset [0,\infty)$ such that $\int_E dr < \infty$.

%Recall that for a measurable set $E\subset [0,1)$, the upper density %$\overline{D}(E)$ is 
%\begin{equation*}
%  \overline{D}(E) = \limsup_{r\to 1^-} \, \frac{m(E \cap [r,1))}{1-r}.
%\end{equation*}
The following consequence of Theorem~\ref{thm:nn} generalizes 
\cite[Theorem~5]{CHR:2009} to an~arbitrary auxiliary function $s(r)\in(r,R)$.
A~similar result for subharmonic functions in the plane is obtained in \cite{ChyKol}; see also \cite[Lemma 5]{HayMil}.

%%%%%%%%%%%%%%%%%%%%%%%%%%%%
%%%% ---- COROLLARY ---- %%%%
%%%%%%%%%%%%%%%%%%%%%%%%%%%%

\begin{corollary} \label{cor:psi}
Let $f$ be meromorphic in $D(0,R)$ for $R<\infty$, and
let $j,k$ be integers with $k>j\geq 0$ such that $f^{(j)}\not\equiv 0$.
Let $s:[0,R) \to [0,R)$ be an~increasing continuous function such
that $s(r) \in (r,R)$ and $s(r)-r$ is decreasing.
If $\delta\in (0,1)$, then
there exists a~measurable set $E\subset [0,R)$ with 
%$\overline{D}(E)\leq \delta$ 
\begin{equation*}
  \overline{d}(E) = \limsup_{r\to R^-} \, \frac{m(E \cap [r,R))}{R-r} \leq \delta
\end{equation*}
such that
\begin{equation} \label{eq:lll}
  \int_0^{2\pi} \bigg| \frac{f^{(k)}(re^{i\theta})}{f^{(j)}(re^{i\theta})} \bigg|^{\frac{1}{k-j}} \, d\theta
  \lesssim \frac{T(s(r),f) -\log(s(r)-r)}{s(r)-r}, \quad r\in [0,R) \setminus E.
\end{equation}
Moreover, if $k=1$ and $j=0$, then the logarithmic term in \eqref{eq:lll} can be omitted.
\end{corollary}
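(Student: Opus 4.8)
The plan is to deduce Corollary~\ref{cor:psi} from Corollary~\ref{cor:nn} by an averaging argument over a suitable one-parameter family of radii. Fix $\delta\in(0,1)$. For each $r\in[0,R)$ set $r'=r'(r)$ and run Corollary~\ref{cor:nn} on the annulus $r<|z|<s(r)$, so that, after bounding $R$ there by $s(r)$,
\begin{equation*}
  \int_{r<|z|<s(r)}\bigg|\frac{f^{(k)}(z)}{f^{(j)}(z)}\bigg|^{\frac{1}{k-j}}\,dm(z)
  \lesssim s(r)\,\log\frac{e\,(s(r)-r')}{s(r)-r}\,\Big(1+\log^+\frac{1}{s(r)-r}+T(s(r),f)\Big).
\end{equation*}
Writing the area integral in polar coordinates, the left side equals $\int_r^{s(r)} g(t)\,t\,dt$ where $g(t)=\int_0^{2\pi}|f^{(k)}/f^{(j)}|^{1/(k-j)}(te^{i\theta})\,d\theta$. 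Since $g\geq 0$, for each $r$ there must exist a radius $t=t(r)\in(r,s(r))$ at which $t(r)\,g(t(r))$ is at most the average, i.e.\ at most the right-hand side divided by $s(r)-r$; replacing $t(r)$ by $r$ and absorbing the harmless factor $s(r)/t(r)\le s(r)/r$ (bounded once $r$ is bounded away from $0$, and on a neighbourhood of $0$ everything is trivially finite) gives the desired pointwise bound
\begin{equation*}
  g(r)\lesssim \frac{s(r)\log\frac{e(s(r)-r')}{s(r)-r}}{s(r)-r}\,\Big(1+\log^+\frac{1}{s(r)-r}+T(s(r),f)\Big)
\end{equation*}
for those $r$. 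The issue is that such a $t(r)$ is only guaranteed to exist, not to coincide with $r$; the standard remedy is a covering/exceptional-set argument.

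The key step is therefore the construction of the exceptional set $E$ with $\overline d(E)\le\delta$. I would argue by contradiction in the style of Borel-type lemmas: let $E$ be the set of $r\in[0,R)$ for which \eqref{eq:lll} fails (with a comparison constant to be fixed), and suppose $\overline d(E)>\delta$. Using that $s(r)-r$ is decreasing (hence $s$ is injective with controlled overlap of the intervals $(r,s(r))$) one covers a large portion of $E$ near $R$ by finitely many such annuli with bounded multiplicity; summing the averaged estimate from Corollary~\ref{cor:nn} over this subfamily and comparing with $\int_E g(t)\,t\,dt$, which is large by the failure of \eqref{eq:lll} on $E$, yields a contradiction provided the comparison constant was chosen large enough in terms of $\delta$ and the overlap constant. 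The monotonicity of $s(r)-r$ is exactly what makes the factor $\log\frac{e(s(r)-r')}{s(r)-r}$ in Corollary~\ref{cor:nn} absorbable: choosing $r'$ so that $s(r)-r'\asymp s(r)-r$ (for instance $r'$ the point with $s(r')=$ something comparable, or simply $r'=\max\{0,2r-s(r)\}$ when that is nonnegative) keeps that logarithm bounded, and then $s(r)/(s(r)-r)$ together with $1+\log^+\frac{1}{s(r)-r}$ combine, after noting $\log^+\frac{1}{s(r)-r}\le -\log(s(r)-r)+O(1)$ and $T(s(r),f)\ge 0$, into the single term $\frac{T(s(r),f)-\log(s(r)-r)}{s(r)-r}$ up to an absolute constant and the bounded factor $s(r)$.

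For the final sentence, when $k=1$ and $j=0$ I would not use Corollary~\ref{cor:nn} but instead return to the cleaner estimate \eqref{eq:nice} coming directly from Theorem~\ref{thm:nn}, namely
\begin{equation*}
  \int_{r<|z|<s(r)}\bigg|\frac{f'(z)}{f(z)}\bigg|\,dm(z)\lesssim s(r)\,\log\frac{e(s(r)-r')}{s(r)-r}\,\big(T(s(r),f)+1\big),
\end{equation*}
which carries no $\log^+\frac{1}{s(r)-r}$ term; feeding this into the same averaging-plus-exceptional-set machinery produces \eqref{eq:lll} with the $-\log(s(r)-r)$ replaced by the harmless $+1$, i.e.\ with the logarithmic term omitted. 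The main obstacle I anticipate is purely bookkeeping: pinning down a choice of $r'(r)$ and of the covering subfamily for which the overlap multiplicity is genuinely bounded and the logarithmic factor stays $O(1)$, so that the contradiction in the Borel-type argument closes with an explicit dependence of the exceptional set's upper density on $\delta$; the analytic content is entirely supplied by Corollary~\ref{cor:nn} (respectively \eqref{eq:nice}).
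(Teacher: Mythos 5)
Your core idea---apply Theorem~\ref{thm:nn}/Corollary~\ref{cor:nn} over annuli, use a Chebyshev/mean-value argument to extract a pointwise bound on circles, and push the bad radii into an exceptional set of small upper density, using that $r\mapsto s(r)-r$ is decreasing to control the logarithmic factor---is exactly the right one, and it is the same averaging-plus-exceptional-set scheme the paper uses. There is, however, a genuine gap where you yourself anticipate one: you propose to produce $E$ by contradiction, covering a set of large upper density near $R$ by the one-parameter family of annuli $\{(r,s(r)):r\in E\}$ with \emph{bounded overlap}, but this bounded-multiplicity covering is neither constructed nor justified, and for a general increasing continuous $s$ it is not automatic. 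The step from the existence, for each base point $r$, of \emph{some} good radius $t(r)\in(r,s(r))$, to the conclusion that \emph{most} radii are good, is precisely the crux of the corollary, and ``purely bookkeeping'' undersells it.

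The paper closes this gap by replacing the continuum family of overlapping annuli with an explicit non\-over\-lap\-ping partition: set $r_0=R/2$ and $r_n=\tfrac{1}{2}\big(r_{n-1}+s(r_{n-1})\big)$, so $r_n-r_{n-1}=\tfrac{1}{2}\big(s(r_{n-1})-r_{n-1}\big)$ and $r_n\to R$. On each annulus $\{r_{n-1}<|z|<r_n\}$ one applies \eqref{eq:nice} with $\varrho=s(r_{n-1})$, for which the factor $\log\tfrac{e(\varrho-r_{n-1})}{\varrho-r_n}=\log(2e)$ is an absolute constant, so there is no need to fine-tune any $r'(r)$. Then Chebyshev--Markov on $[r_{n-1},r_n)$ bounds $m(G_n)\lesssim K^{-1}(r_n-r_{n-1})$, and taking $E=[0,R/2)\cup\bigcup_n G_n$ the upper density is read off by telescoping, $\overline d(E)\lesssim K^{-1}\cdot (R-r_{n-1})/(R-r_n)\le 2/K$, the last inequality being where the monotonicity of $s(r)-r$ enters (via $s(r_{n-1})<R$ and the decreasing map $x\mapsto(x-r_{n-1})/(x-r_n)$). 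Finally, the estimate at $r\in[r_{n-1},r_n)\setminus E$ is shown to dominate \eqref{eq:lll} because $T(s(\cdot),f)$ is increasing and $s(\cdot)-\cdot$ is decreasing. This construction does all the work your contradiction argument would have to do, but with no covering lemma to prove. I recommend you replace the Borel-type contradiction by this explicit recursive partition; the rest of your outline (deriving \eqref{eq:lll} from Corollary~\ref{cor:nn}, and using \eqref{eq:nice} directly when $k=1$, $j=0$ to drop the logarithmic term) is sound.
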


To proof of Corollary~\ref{cor:psi} can easily be modified to obtain
the following result.

\begin{corollary} 
Let $f$ be meromorphic in $\C$, and
let $j,k$ be integers with $k>j\geq 0$ such that $f^{(j)}\not\equiv 0$.
Let $S:[0,\infty) \to [0,\infty)$ be an~increasing continuous function such
that $S(r) \in (r,\infty)$ and $S(r)-r$ is decreasing.
If $\delta\in (0,1)$, then
there exists a~measurable set $E\subset [0,\infty)$ with 
	$$
	\overline{D}(E)=\limsup_{r\to\infty}\frac{m(E \cap [0,r))}{r}\leq \delta
	$$ 
such that
\begin{equation}\label{kaniin}
  \int_0^{2\pi} \bigg| \frac{f^{(k)}(re^{i\theta})}{f^{(j)}(re^{i\theta})} \bigg|^{\frac{1}{k-j}} \, d\theta
  \lesssim \frac{T(S(r),f) +\log S(r)-\log(S(r)-r)}{S(r)-r},
\end{equation}
for $r\in [0,\infty) \setminus E$.
Moreover, if $k=1$ and $j=0$, then the logarithmic terms in \eqref{kaniin} can be omitted.
\end{corollary}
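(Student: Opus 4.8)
The plan is to transplant the proof of Corollary~\ref{cor:psi} to the plane, replacing the finite radius $R$ by an arbitrary cut-off $\varrho$ that grows with $r$, and then to optimize the choice of $\varrho$. First I would fix $r>0$, apply Corollary~\ref{cor:nn} with some $\varrho\in(r,\infty)$ (to be chosen), $r'=0$ and $R$ replaced by $\varrho$, to obtain
\begin{equation*}
  \int_{|z|<r} \bigg| \frac{f^{(k)}(z)}{f^{(j)}(z)} \bigg|^{\frac{1}{k-j}} \, dm(z)
  \lesssim \varrho \, \log\frac{e\varrho}{\varrho-r} \, \Big( 1 + \log^+\frac{1}{\varrho-r} + T(\varrho,f) \Big).
\end{equation*}
Differentiating the left side in $r$ is not directly legitimate, so instead I would compare the area integral over the annulus $\{r<|z|<r_1\}$ with the circular integral: since the integrand is nonnegative, monotonicity in the outer radius gives, for any $r_1>r$,
\begin{equation*}
  (r_1-r)\, r \int_0^{2\pi} \bigg| \frac{f^{(k)}(re^{i\theta})}{f^{(j)}(re^{i\theta})} \bigg|^{\frac{1}{k-j}} d\theta
  \leq \int_{|z|<r_1} \bigg| \frac{f^{(k)}(z)}{f^{(j)}(z)} \bigg|^{\frac{1}{k-j}} dm(z)
\end{equation*}
after shrinking the annulus appropriately; more precisely one uses that $t\mapsto\int_0^{2\pi}|f^{(k)}/f^{(j)}|^{1/(k-j)}(te^{i\theta})\,d\theta$ need not be monotone, so the correct device is the one already used in the proof of Corollary~\ref{cor:psi}: bound the circular integral at radius $r$ by an average of the area integral over a thin annulus whose width is governed by $S(r)-r$, then invoke Corollary~\ref{cor:nn} on a disc of radius $\varrho$ slightly larger than the outer edge of that annulus.

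The natural choice, dictated by the statement, is to take the annulus to have outer radius $S(r)$ (or a point between $r$ and $S(r)$), so that its width is comparable to $S(r)-r$, and to apply Corollary~\ref{cor:nn} with $\varrho$ a fixed fraction of the way from the annulus out toward, say, $r + 2(S(r)-r) = 2S(r)-r$. With such a choice $\varrho \asymp S(r)$ (using $S(r)>r$), $\log^+\frac{1}{\varrho-r}\lesssim \log^+\frac{1}{S(r)-r}$, and $\log\frac{e\varrho}{\varrho-r}\lesssim \log\frac{eS(r)}{S(r)-r} = 1 + \log S(r) - \log(S(r)-r)$, so Corollary~\ref{cor:nn} yields
\begin{equation*}
  \int_{|z|<\varrho'} \bigg| \frac{f^{(k)}(z)}{f^{(j)}(z)} \bigg|^{\frac{1}{k-j}} dm(z)
  \lesssim S(r) \Big( 1 + \log S(r) - \log(S(r)-r) \Big) \Big( 1 + \log^+\tfrac{1}{S(r)-r} + T(2S(r),f) \Big)
\end{equation*}
for the relevant intermediate radius $\varrho'$. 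Then I would divide by the width $S(r)-r$ times the inner radius (which is $\asymp r$, hence $\gtrsim$ a constant once $r$ is bounded away from $0$) and absorb the tame factors $1+\log S(r)-\log(S(r)-r)$ and $1+\log^+\frac{1}{S(r)-r}$ into $T(2S(r),f)$ after a Borel-lemma argument. This is where the exceptional set $E$ enters: the standard Borel lemma (in the "linear growth" form appropriate to the plane, cf. the discussion following Corollary~\ref{cor:nn}) produces a set $E$ of upper linear density $\le\delta$ outside which $T(2S(r),f)\lesssim T(S(r),f)$ (or simply absorb the constant by replacing $S$ with a slightly larger function of the same asymptotic type). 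The monotonicity of $S(r)-r$ is exactly what makes the annulus widths nest correctly and what lets the Borel estimate run; this matches its use in Corollary~\ref{cor:psi}.

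The last point, $k=1,j=0$, requires going back to Theorem~\ref{thm:nn} rather than Corollary~\ref{cor:nn}, since Corollary~\ref{cor:nn} already carries the extra $\log^+\frac{1}{R-r}$ term that we want to avoid. Using the refined bound \eqref{eq:nice}, $\int_{|z|<r}|f'/f|\,dm \lesssim \varrho\log\frac{e(\varrho-r')}{\varrho-r}(T(\varrho,f)+1)$ with $r'=0$ and $\varrho\asymp S(r)$, the same averaging-over-an-annulus step gives the circular integral bounded by a constant times $\frac{S(r)}{(S(r)-r)\,r}\,\log\frac{eS(r)}{S(r)-r}\,(T(S(r),f)+1)$ off an exceptional set; here $\log\frac{eS(r)}{S(r)-r}$ is again $1+\log S(r)-\log(S(r)-r)$, but for $k=1$ one can fold it into the outer factor and thus drop the \emph{additive} logarithmic term, leaving the clean $T(S(r),f)/(S(r)-r)$, as claimed. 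The main obstacle I anticipate is purely bookkeeping: choosing $\varrho'$, the annulus width, and $\varrho$ so that (a) Corollary~\ref{cor:nn} (resp.\ Theorem~\ref{thm:nn}) applies with $\varrho-r\asymp S(r)-r$, (b) the averaging inequality relating the circular and area integrals is valid, and (c) the Borel lemma converts $T(\text{dilated }S(r),f)$ back to $T(S(r),f)$ on a set of density $\le\delta$ — all simultaneously, while respecting that $S(r)-r$ is only decreasing (not, say, Lipschitz). None of this is deep; it is the careful mirroring of the disc proof of Corollary~\ref{cor:psi} with the plane's linear Borel lemma in place of the disc's $\int_E dr/(1-r)<\infty$ version.
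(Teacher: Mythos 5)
Your overall plan -- transplant the proof of Corollary~\ref{cor:psi} to the plane, using \eqref{eq:nice} (resp.\ Corollary~\ref{cor:nn}) on annuli and a Chebyshev-type selection to pass from area integrals to circular integrals off a set of controlled density -- is the right one, but two of your concrete steps fail. First, the proposed Borel-lemma step is false: there is no general principle giving $T(2S(r),f)\lesssim T(S(r),f)$ outside a set of upper linear density at most $\delta$ (take $f(z)=e^{e^z}$, for which $T(2\varrho,f)/T(\varrho,f)\to\infty$ for \emph{every} $\varrho\to\infty$); the Borel-type statement after Corollary~\ref{cor:nn} only allows the tiny increment $R=r+1/(eT(r,f))$, not a dilation of the argument. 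The exceptional set in this corollary does not come from any Borel lemma at all: it comes from the Chebyshev--Markov argument itself, run on a fixed partition. The paper's device is to set $r_0>0$, $r_n=(r_{n-1}+S(r_{n-1}))/2$ (so $r_n\to\infty$, since a finite limit $\alpha$ would force $S(\alpha)=\alpha$), apply \eqref{eq:nice} with $\varrho=S(r_{n-1})$, $r'=r_{n-1}$, $r=r_n$, and then let $G_n\subset[r_{n-1},r_n)$ be the radii where the circular integral exceeds $K$ times the natural average; Chebyshev--Markov gives $m(G_n)\lesssim K^{-1}(r_n-r_{n-1})$, whence $m(E\cap[0,r))\lesssim r_0+K^{-1}r$ and $\overline{D}(E)\leq\delta$ for $K$ large. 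Choosing $\varrho=S(r_{n-1})$ (not $\asymp 2S(r)-r$) means $T(S(r),f)$ appears directly and no dilation has to be undone. Avoiding a ``moving annulus for each $r$'' is not mere bookkeeping: without the fixed partition you have no way to sum the bad sets and bound the density of their union.

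Second, your treatment of the case $k=1$, $j=0$ with $r'=0$ does not yield the stated bound. With $r'=0$ the factor in \eqref{eq:nice} is $\log\frac{eS(r)}{S(r)-r}$, which is unbounded and multiplies $T(S(r),f)$; a product $T(S(r),f)\log\frac{eS(r)}{S(r)-r}$ cannot be ``folded in'' to give $\frac{T(S(r),f)}{S(r)-r}$, since the statement (with the logarithmic terms omitted) has the logarithm only additively, and here not at all. The point of taking $r'=r_{n-1}$ and $r=r_n$ equal to the midpoint toward $S(r_{n-1})$ is precisely that then $\log\frac{e(\varrho-r')}{\varrho-r}=\log(2e)$ is an absolute constant, so for $k=1$, $j=0$ no logarithm survives; in the general case one uses Corollary~\ref{cor:nn} on the same annuli, and the surviving term $\log^+\frac{1}{S(r_{n-1})-r_n}$ is absorbed into $\log S(r)-\log(S(r)-r)$ (here $S(r)-r$ decreasing, hence $S(r)\lesssim r$, also controls the factor $S(r_{n-1})/r_{n-1}$ produced by the leading $\varrho$ in the plane). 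As written, your argument proves a weaker inequality with an extra multiplicative logarithmic factor and relies on an invalid density statement for $T$.
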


%%%%%%%%%%%%%%%%%%%%%%%
%%%% ---- SECTION ---- %%%%
%%%%%%%%%%%%%%%%%%%%%%%

\section{Proof of Theorem~\ref{thm:nn}} \label{sec:logdva_proof}

As is the case with usual estimates for logarithmic derivatives, the proof begins with the standard differentiated
form of the Poisson-Jensen formula. Differing from the proof of \cite[Theorem~5]{CHR:2009},
where the integration is conducted in a~sequence of annuli of fixed hyperbolic width, we consider 
a~single annulus of arbitrary width in several steps. 
This is due to an~arbitrary $s(r)$, as opposed to a~specific $s(r)=1-\beta (1-r)$, $\beta\in(0,1)$, in \cite[Theorem~5]{CHR:2009}.

By the Poisson-Jensen formula,
\begin{equation*}
\begin{split}
  \log{|f(z)|} & = \frac{1}{2\pi} \int_0^{2\pi} \log |f(\varrho e^{i\varphi})| \, K(z,\varrho e^{i\varphi}) \, d\varphi\\
  & \qquad - \sum_{|a_\mu | < \varrho} \log \, \left| \frac{\varrho^2-\overline{a}_\mu z}{\varrho(z-a_\mu)} \right|
  + \sum_{|b_\nu | < \varrho} \log \, \left| \frac{\varrho^2-\overline{b}_\nu z}{\varrho(z-b_\nu)} \right|,
  \quad z\in D(0,\varrho),
\end{split}
\end{equation*}
where $\{a_\mu\}$ and $\{b_\nu\}$ are the zeros and the poles of $f$, and
\begin{equation*}
  K(z,\varrho e^{i\varphi}) 
  = \frac{\varrho^2 - |z|^2}{|\varrho e^{i\varphi} - z|^2} 
  = \Real \!\left( \frac{\varrho e^{i\varphi} +z }{\varrho e^{i\varphi} - z} \right), \quad z\in D(0,\varrho),
\end{equation*}
is the Poisson kernel. By differentiation,
\begin{equation*}
  \begin{split}
    \frac{f'(z)}{f(z)} & = \frac{1}{2\pi} \int_0^{2\pi} \log |f(\varrho e^{i\varphi})| \, 
    \frac{2\varrho e^{i\varphi}}{(\varrho e^{i\varphi}-z)^2} \, d\varphi\\
    & \qquad - \sum_{|a_\mu | < \varrho} \frac{|a_\mu|^2-\varrho^2}{(z-a_\mu)(\varrho^2-\overline{a}_\mu z)}
    + \sum_{|b_\nu | < \varrho} \frac{|b_\nu|^2-\varrho^2}{(z-b_\nu)(\varrho^2-\overline{b}_\nu z)}
  \end{split}
\end{equation*}
for all $z\in D(0,\varrho)$. Let $\{ c_m \} = \{a_\mu \} \cup \{ b_\nu\}$. We deduce
\begin{equation*}
  \left| \frac{f'(z)}{f(z)}  \right| \leq \frac{\varrho}{\pi} \int_0^{2\pi} \frac{\big| 
    \log |f(\varrho e^{i\varphi})|\big|}{|\varrho e^{i\varphi}-z|^2} \, d\varphi
  + \sum_{|c_m | < \varrho} \frac{\varrho^2 - |c_m|^2}{|z-c_m|\, |\varrho^2-\overline{c}_m z|},
  \quad z\in D(0,\varrho),
\end{equation*}
and therefore an application of Fubini's theorem yields
\begin{align}
  & \int_{r'<|z|<r}  \left| \frac{f'(z)}{f(z)}  \right| \, dm(z)\notag\\
  & \qquad \leq \frac{\varrho}{\pi} \int_0^{2\pi} \big| \log |f(\varrho e^{i\varphi})| \big|
  \left( \, \int_{r'<|z|<r} \frac{dm(z)}{|\varrho e^{i\varphi}-z|^2} \right)  d\varphi \label{eq:rhs}\\
  & \qquad \qquad + n(0) \int_{r'<|z|<r}\frac{dm(z)}{|z|}\notag\\
  &  \qquad \qquad + \sum_{0<|c_m | < \varrho} \frac{\varrho^2-|c_m|^2}{|c_m|} \,  
  \int_{r'<|z|<r} \frac{dm(z)}{|z-c_m|\, |z-\varrho^2/\overline{c}_m|}, \notag
\end{align}
where $n(r)$ is the non-integrated counting function for $c_m$-points in $|z|\leq r$, 
while~$N(r)$ is its integrated counterpart. Let $I_1$ be the 
%right-hand side of 
integral in
\eqref{eq:rhs}, and let~$I_2$ be the remaining part of the upper bound.

We proceed to study $I_1=I_1(r',r,\varrho)$ and $I_2=I_2(r',r,\varrho)$ separately.
By the well-known properties of the Poisson kernel,
\begin{equation*}
  \int_{r'<|z|<r} \frac{dm(z)}{|\varrho e^{i\varphi}-z|^2}
  = 2\pi \int_{r'}^r \frac{s\, ds}{\varrho^2-s^2}
  = \pi \, \log \frac{\varrho^2-(r')^2}{\varrho^2-r^2},
\end{equation*}
and therefore
\begin{align*}
  I_1 & = \varrho \, \log \frac{\varrho^2-(r')^2}{\varrho^2-r^2}  \int_0^{2\pi} \big| \log |f(\varrho e^{i\varphi})| \big| \, d\varphi\\
  & = \varrho \, \log \frac{\varrho^2-(r')^2}{\varrho^2-r^2} \left( \,
    \int_0^{2\pi} \log^+ |f(\varrho e^{i\varphi})| \, d\varphi
    +  \int_0^{2\pi} \log ^+ \frac{1}{|f(\varrho e^{i\varphi})|} \, d\varphi \right)\\
  & \leq \varrho \, \log \frac{\varrho^2-(r')^2}{\varrho^2-r^2} \big( 2\,  T(\varrho, f) + O(1) \big).
\end{align*}
Here $O(1)$ is a~bounded term, 
%which is independent of $r', r, \varrho$, 
which depends on the initial values of $f$ at the origin
and which arises from the application of Nevanlinna's first main theorem.

To estimate $I_2$, it suffices to find an~upper bound for
\begin{equation} \label{eq:bound}
  \int_{r'<|z|<r} \frac{dm(z)}{|z-c|\, |z-\varrho^2/c|}, \quad 0<c<\varrho.
\end{equation}
The remaining argument is divided in separate cases. Before going any further, we 
consider two auxiliary results that will be used to complete the proof of the theorem.

%%%%%%%%%%%%%%%%%%%%%%
%%%% ---- LEMMA ---- %%%% 
%%%%%%%%%%%%%%%%%%%%%%

\begin{lemma} \label{lemma:pq}
Let $0\leq s_1\leq s_2<1$ and $0<p,q<\infty$. Then
\begin{equation*}
  J(s_1,s_2) :=\int_0^{2\pi} \frac{d\theta}{|1-s_1 e^{i\theta}|^p \, |1-s_2 e^{i\theta}|^q}
\end{equation*}
has the following asymptotic behavior:
\begin{enumerate}[leftmargin=0.75cm]
\itemsep0.5em 
\item[\rm (i)]
If $q>1$, then $J(s_1,s_2) \asymp  \displaystyle\frac{1}{(1-s_1)^p(1-s_2)^{q-1}}$;

\item[\rm (ii)]
if $q=1$, then $J(s_1,s_2) \asymp  \displaystyle\frac{1}{(1-s_1)^p} \left( \log\frac{1-s_1}{1-s_2} + 1 \right)$;

\item[\rm (iii)]
if $0<q<1$, then $J(s_1,s_2) \asymp  \displaystyle\frac{1}{(1-s_1)^{p+q-1}}$.
\end{enumerate}
\end{lemma}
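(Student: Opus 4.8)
The plan is to reduce the estimation of $J(s_1,s_2)$ to a standard one-variable integral by exploiting the elementary asymptotic $|1-se^{i\theta}| \asymp (1-s) + |\theta|$ valid for $s\in[0,1)$ and $|\theta|\leq\pi$, which follows from $|1-se^{i\theta}|^2 = (1-s)^2 + 4s\sin^2(\theta/2)$ together with $\sin^2(\theta/2)\asymp\theta^2$ on $[-\pi,\pi]$. Using this for both factors and the symmetry in $\theta\mapsto-\theta$, we obtain
\begin{equation*}
  J(s_1,s_2) \asymp \int_0^{\pi} \frac{d\theta}{\big((1-s_1)+\theta\big)^p \, \big((1-s_2)+\theta\big)^q}.
\end{equation*}
Write $a = 1-s_1$ and $b = 1-s_2$, so that $0<b\leq a\leq 1$ (since $s_1\leq s_2$), and denote the right-hand integral by $\widetilde J(a,b)$. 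The key structural point is that $b\leq a$, so the behavior of the integrand changes at $\theta\asymp a$: for $\theta\lesssim a$ we have $a+\theta\asymp a$ and $b+\theta\asymp b+\theta$, while for $\theta\gtrsim a$ both denominators are $\asymp\theta$ and the integrand is $\asymp\theta^{-(p+q)}$.

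Next I would split $\widetilde J(a,b) = \int_0^a + \int_a^\pi$ and estimate each piece. For the second piece, $\int_a^\pi \theta^{-(p+q)}\,d\theta$ is, up to constants, bounded by $a^{1-p-q}$ when $p+q>1$ (the generic case here, since $p>0$ and we are most interested in $q\geq 1$) plus a bounded contribution; in the remaining subcase this tail is $O(1)$ and is absorbed. For the first piece, pull out the factor $a^{-p}$ to get $a^{-p}\int_0^a (b+\theta)^{-q}\,d\theta$, and here the three cases of the lemma appear naturally from integrating $(b+\theta)^{-q}$:
\begin{itemize}[leftmargin=0.75cm]
\item[\rm (i)] if $q>1$, then $\int_0^a (b+\theta)^{-q}\,d\theta \asymp b^{1-q}$ (the antiderivative is $\asymp (b+\theta)^{1-q}$, dominated by the $\theta=0$ endpoint), giving $a^{-p}b^{1-q}$, and one checks that the tail $a^{1-p-q}\leq a^{-p}b^{1-q}$ since $a\geq b$, so the first piece dominates;
\item[\rm (ii)] if $q=1$, then $\int_0^a (b+\theta)^{-1}\,d\theta = \log\frac{a+b}{b}\asymp \log\frac{a}{b}+1$ (using $b\leq a\leq a+b\leq 2a$), giving $a^{-p}\big(\log(a/b)+1\big)$, which again dominates the tail $a^{-p}$;
\item[\rm (iii)] if $0<q<1$, then $\int_0^a (b+\theta)^{-q}\,d\theta \asymp a^{1-q}$ (now the $\theta=a$ endpoint dominates, as $b\leq a$), giving $a^{1-q-p}$, which matches the tail contribution exactly.
\end{itemize}
Re-substituting $a=1-s_1$, $b=1-s_2$ yields precisely the three asserted asymptotics.

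The main technical obstacle is book-keeping the constants in the two-sided estimate $|1-se^{i\theta}|\asymp(1-s)+|\theta|$ uniformly in $s\in[0,1)$ — in particular the lower bound, which requires $\sin^2(\theta/2)\gtrsim\theta^2$ on $[-\pi,\pi]$ and the factor $s$ staying away from $0$ is handled by noting that for $s$ bounded away from $1$ both sides of the claimed relation are comparable to constants anyway, so the estimate is only delicate as $s\to1^-$. A secondary point requiring care is verifying in cases (i) and (ii) that the ``inner'' contribution $\int_0^a$ genuinely dominates the tail $\int_a^\pi$ (rather than merely being of the same order), which is where the hypothesis $s_1\leq s_2$, i.e. $b\leq a$, is used essentially; once that ordering is in hand the comparisons $a^{1-p-q}\lesssim a^{-p}b^{1-q}$ (case (i)) and $a^{-p}\lesssim a^{-p}(\log(a/b)+1)$ (case (ii)) are immediate.
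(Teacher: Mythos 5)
Your proposal follows essentially the same route as the paper: the paper also reduces matters to the one-dimensional comparison $|1-se^{i\theta}|\asymp (1-s)+\theta$ near $\theta=0$ (obtained there from Taylor bounds for cosine, assuming WLOG $1/2\leq s_1$), restricts the integral to a neighbourhood of $\theta=0$, and splits it at the points $1-s_2$ and $1-s_1$ before estimating termwise; your single split at $\theta=1-s_1$ with the inner factor $(1-s_1)^{-p}$ pulled out is just a slightly more compact bookkeeping of the same computation, and your verifications that the inner piece dominates the tail in cases (i) and (ii), and matches it in case (iii) when $p+q>1$, are correct.

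One point does not go through as written: in case (iii) with $p+q\leq 1$ you say the tail is $O(1)$ ``and is absorbed'', but it cannot be absorbed, since the inner contribution $a^{1-p-q}$ is then $\lesssim 1$ (and for $p+q=1$ the tail is even $\asymp \log(1/a)$). In fact the asserted asymptotics in (iii) fail in that range: for $p=q=1/2$ and $s_1=s_2=s\to 1^-$ one has $J(s,s)\asymp\log\frac{1}{1-s}$, whereas the right-hand side of (iii) is $\asymp 1$. So this is a defect of the lemma as stated (implicitly it needs $p+q>1$ in (iii)) rather than of your method; the paper's own proof leaves these case distinctions to the reader and would meet the same obstruction, and in the applications only $p=q=1$, i.e.\ case (ii), is ever used. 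Apart from flagging that restriction, your argument is sound.
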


%%%%%%%%%%%%%%%%%%%%%%
%%%% ---- PROOF ---- %%%%
%%%%%%%%%%%%%%%%%%%%%%

\begin{proof}
Without any loss of generality, assume $1/2 \leq s_1\leq s_2<1$.
By utilizing the first three non-zero terms of cosine's Taylor series expansion, we obtain
\begin{equation*}
|1-s e^{i\theta}|^2 = 1 + s^2 - 2s\cos\theta \geq (1-s)^2+\frac{11}{12} \, s \theta^2, \quad 0<\theta<1.
\end{equation*}
The asymptotic behavior of $J(s_1,s_2)$ is comparable to that of
\begin{align*}
  & \int_0^{1} \frac{d\theta}{|1-s_1 e^{i\theta}|^p \, |1-s_2 e^{i\theta}|^q} \\
  & \qquad \leq   \Bigg[ \int_0^{1-s_2} \!+ \int_{1-s_2}^{1-s_1} \!+ \int_{1-s_1}^1 \Bigg] 
  \!\frac{d\theta}{\left(  (1-s_1)^2+\frac{11}{12}  s_1 \theta^2 \right)^{\frac{p}{2}} 
    \!\left(  (1-s_2)^2+\frac{11}{12}  s_2 \theta^2 \right)^{\frac{q}{2}}}\\
  & \qquad \lesssim \frac{1-s_2}{(1-s_1)^p(1-s_2)^q}
  + \frac{1}{(1-s_1)^p} \int_{1-s_2}^{1-s_1} \frac{d\theta}{\theta^q}
  + \int_{1-s_1}^1 \frac{d\theta}{\theta^{p+q}},
\end{align*}
which has to be estimated in the cases (i)--(iii). The details are left to the reader.
For the converse asymptotic inequality, take only the first two non-zero terms of cosine's Taylor series expansion, and
repeat the argument. 
\end{proof}

%%%%%%%%%%%%%%%%%%%%%%
%%%% ---- LEMMA ---- %%%%
%%%%%%%%%%%%%%%%%%%%%%

\begin{lemma} \label{lemma:intest}
Let $1\leq a < b \leq \infty$. Then
\begin{align*} 
  \int_a^b \frac{\log t}{t(t-1)}\, dt & \leq \lim_{t\to b^-} \frac{t-a}{at} \, (2+\log a),\\
  \int_a^b \frac{\log t}{t^2}\, dt & \leq \lim_{t\to b^-} \frac{t-a}{at} \, (1+\log a). 
\end{align*}
\end{lemma}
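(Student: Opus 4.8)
The plan is to prove the two inequalities of Lemma~\ref{lemma:intest} by exhibiting elementary antiderivatives and then bounding them.

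For the second integral this is completely routine: integration by parts (or a direct guess) gives
\begin{equation*}
  \int \frac{\log t}{t^2}\, dt = -\frac{\log t}{t} - \frac{1}{t} + C = -\frac{1+\log t}{t} + C,
\end{equation*}
so that
\begin{equation*}
  \int_a^b \frac{\log t}{t^2}\, dt = \frac{1+\log a}{a} - \lim_{t\to b^-} \frac{1+\log t}{t}.
\end{equation*}
Since $t\mapsto (1+\log t)/t$ is eventually decreasing, the bound $\tfrac{t-a}{at}(1+\log a)$ will follow once I rewrite $\tfrac{1+\log a}{a} - \tfrac{1+\log t}{t}$ appropriately; in fact $\tfrac{1+\log a}{a} - \tfrac{1+\log t}{t} \le \bigl(\tfrac1a - \tfrac1t\bigr)(1+\log a) = \tfrac{t-a}{at}(1+\log a)$ because the subtracted term $\tfrac{1+\log t}{t}$ is at least $\tfrac{1+\log a}{t}$ when $t\ge a\ge 1$. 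Taking the limit $t\to b^-$ (allowing $b=\infty$, in which case the limit is $0$) yields the claim.

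For the first integral I would use the partial fraction decomposition $\tfrac{1}{t(t-1)} = \tfrac{1}{t-1} - \tfrac{1}{t}$, so that
\begin{equation*}
  \int \frac{\log t}{t(t-1)}\, dt = \int \frac{\log t}{t-1}\, dt - \int \frac{\log t}{t}\, dt.
\end{equation*}
The second piece is $\tfrac12(\log t)^2$; the first piece is a dilogarithm, which is awkward to handle directly. The cleaner route is therefore \emph{not} to antidifferentiate but to estimate the integrand: for $t\ge a\ge 1$ one has $t-1 \ge t(1 - 1/a) \cdot \tfrac{a-1}{?}$— more usefully, $\tfrac{1}{t(t-1)} \le \tfrac{a}{a-1}\cdot \tfrac{1}{t^2}$ fails at $t=a$, so instead I would split off the worst behaviour near $t=1$ only when $a=1$. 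Actually the robust approach is: write $\tfrac{t}{t-1} \le \tfrac{a}{a-1}$? No. The honest plan is to note $\tfrac{1}{t-1} = \tfrac{1}{t} + \tfrac{1}{t(t-1)} \le \tfrac{1}{t} + \tfrac{1}{t^2}\cdot\tfrac{t}{t-1}$, which is circular. So the main obstacle is genuinely the first integral, and I expect to resolve it by the substitution that reduces it to the second one.

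Concretely: substitute $t = 1/u$ in $\int_a^b \tfrac{\log t}{t(t-1)}\,dt$, giving $dt = -u^{-2}\,du$ and $\tfrac{\log t}{t(t-1)} = \tfrac{-\log u}{(1/u)(1/u - 1)} = \tfrac{-\log u \cdot u^2}{(1-u)/u} \cdot \tfrac1u$— tracking this carefully, the integral becomes $\int_{1/b}^{1/a} \tfrac{\log u}{1-u}\,du$ (sign and ranges to be checked), which is still a dilogarithm. A better idea: since $\log t \le t - 1$ for $t\ge 1$, we get $\tfrac{\log t}{t(t-1)} \le \tfrac{1}{t}\cdot\tfrac{\log t}{t-1}$ — no. The step I actually expect to work is the pointwise bound $\dfrac{\log t}{t(t-1)} \le \dfrac{\log t}{t^2}\cdot\dfrac{t}{t-1}$ combined with $\dfrac{t}{t-1}\le \dfrac{a}{a-1}$ only when this is finite; to cover $a=1$ as well I would instead prove $\int_a^b \tfrac{\log t}{t(t-1)}\,dt \le \int_a^b \tfrac{1+\log t}{t^2}\,dt$ by showing the integrand difference $\tfrac{\log t}{t^2}\bigl(\tfrac{t}{t-1} - 1\bigr) - \tfrac{1}{t^2} = \tfrac{1}{t^2}\bigl(\tfrac{\log t}{t-1} - 1\bigr) \le 0$, i.e. $\log t \le t-1$, which holds for all $t\ge 1$. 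Then the first inequality follows from the already-established second one, noting $\int_a^b \tfrac{1+\log t}{t^2}\,dt = \tfrac{1+\log a}{a} - \lim_{t\to b^-}\tfrac{1+\log t}{t} \le \tfrac{t-a}{at}\big|_{t\to b^-}(2+\log a)$ since $1+\log a \le 2+\log a$ and the limit term is nonnegative. This closes both cases, with the inequality $\log t \le t-1$ being the only nontrivial input.
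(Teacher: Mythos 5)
Your approach is essentially the paper's: the key step in both is the pointwise bound $\frac{\log t}{t(t-1)} \le \frac{1+\log t}{t^2}$, which reduces to $\log t \le t-1$ for $t\ge 1$, followed by evaluating an elementary antiderivative. Your treatment of the second integral via the monotonicity of $\log$ is correct. The detours through partial fractions, $t=1/u$, and dilogarithms are dead ends you correctly abandon before settling on the right idea.

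However, the final display contains a genuine computational slip. You write
\begin{equation*}
  \int_a^b \frac{1+\log t}{t^2}\, dt = \frac{1+\log a}{a} - \lim_{t\to b^-}\frac{1+\log t}{t},
\end{equation*}
but the right-hand side is the value of $\int_a^b \frac{\log t}{t^2}\,dt$, not of $\int_a^b \frac{1+\log t}{t^2}\,dt$; the correct antiderivative of $(1+\log t)/t^2$ is $-(2+\log t)/t$, giving $\frac{2+\log a}{a} - \frac{2+\log b}{b}$. Because of this, the closing justification (``since $1+\log a \le 2+\log a$ and the limit term is nonnegative'') does not actually yield the bound $\frac{b-a}{ab}(2+\log a)$: as $b\to a^+$ the claimed right-hand side tends to $0$ while $\frac{1+\log a}{a}$ does not, so merely dropping a nonnegative term is insufficient. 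The fix is one line: split $\int_a^b \frac{1+\log t}{t^2}\,dt = \int_a^b\frac{dt}{t^2} + \int_a^b\frac{\log t}{t^2}\,dt \le \frac{b-a}{ab} + \frac{b-a}{ab}(1+\log a) = \frac{b-a}{ab}(2+\log a)$, reusing the second estimate you already proved. With that repair the argument is complete and matches the paper's proof (the paper phrases the same computation as a single algebraic rewrite of the antiderivative evaluated at $a$ and $b$).
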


%%%%%%%%%%%%%%%%%%%%%%
%%%% ---- PROOF ---- %%%%
%%%%%%%%%%%%%%%%%%%%%%

\begin{proof}
We prove the former integral estimate and leave the latter to the reader.
Let $1<b<\infty$. Then
\begin{align*}
  \int_a^b \frac{\log t}{t(t-1)}\, dt 
  & \leq \int_a^b \frac{1+\log t}{t^2} \, dt
  = \frac{\log a}{a} + \frac{2}{a} - \frac{\log b}{b} - \frac{2}{b}\\
  &  = \frac{b-a}{ab} \left( 2 + \frac{b \log a - a \log b}{b-a} \right)
  \leq \frac{b-a}{ab} \, (2+\log a).
\end{align*}
The case $b=\infty$ is an immediate modification of the above.
\end{proof}

With the help of Lemmas~\ref{lemma:pq} and \ref{lemma:intest}, we return to the proof of 
Theorem~\ref{thm:nn} and continue to estimate $I_2$.

%%%%%%%%%%%%%%%%%%%%%%%%%%
%%%% ---- SUBSECTION ---- %%%%
%%%%%%%%%%%%%%%%%%%%%%%%%%

\subsection*{Case $0\leq r' < r \leq  c < \varrho$}
Denote $x=c/\varrho$ for short.
By a change of variable, the integral in \eqref{eq:bound} can be transformed into
\begin{equation*}
  \int_{\frac{r'}{\varrho} < |w| < \frac{r}{\varrho}} \frac{dm(w)}{|w-x| \, |w-1/x|}
  = \int_{r'/\varrho}^{r/\varrho} \left( \int_0^{2\pi} \frac{d\theta}{\big|1-\frac{s}{x} e^{i\theta}\big| \, | 1-sx e^{i\theta}| }\right) s\, ds.
\end{equation*}
Let $t(s) = (1-sx)/(1-s/x)$, and note that $t$ is increasing for $s\in [0,x)$. Therefore $t(s) \geq 1$ for all $s\in [0,x)$.
By Lemma~\ref{lemma:pq}, we deduce
\begin{align*}
  \int_{r'<|z|<r} \frac{dm(z)}{|z-c|\, |z-\varrho^2/c|} 
  & \asymp \int_{r'/\varrho}^{r/\varrho}  \frac{s}{1-sx} \left( \log\frac{1-sx}{1-s/x} + 1 \right)  ds \\
  & \leq \frac{r}{\varrho} \int_{r'/\varrho}^{r/\varrho}  \frac{1}{1-sx} \left( \log\frac{1-sx}{1-s/x} + 1 \right)  ds \\
  & = \frac{rc}{\varrho^2} \int_{t(r'/\varrho)}^{t(r/\varrho)} \frac{\log t}{t(t-x^2)} \, dt 
  + \frac{r}{c} \, \log\frac{\varrho^2-c r'}{\varrho^2-c r}.
\end{align*}
An application of Lemma~\ref{lemma:intest} yields
\begin{align*}
  & \int_{r'<|z|<r} \frac{dm(z)}{|z-c|\, |z-\varrho^2/c|}\\
  & \qquad \lesssim  \frac{rc}{\varrho^2} \cdot \frac{t(r/\varrho)-t(r'/\varrho)}{t(r/\varrho) \, t(r'/\varrho)}  \left( 2+\log t(r'/\varrho) \right)
  + \frac{r}{c} \, \log\frac{\varrho^2-c r'}{\varrho^2-c r} \\
  & \qquad =  \frac{r (\varrho^2-c^2) (r-r')}{(\varrho^2-cr')(\varrho^2-cr)}  \left( 2+\log \frac{c(\varrho^2-cr')}{\varrho^2(c-r')} \right)
  + \frac{r}{c} \, \log\frac{\varrho^2-c r'}{\varrho^2-c r} \\
  & \qquad \leq \frac{c}{\varrho} \cdot \frac{r-r'}{\varrho-r'}  \left( 2+\log 2 + \log \frac{\varrho-r'}{r-r'} \right)
  + \frac{r}{c} \, \log\frac{\varrho^2-c r'}{\varrho^2-c r}.
\end{align*}

%%%%%%%%%%%%%%%%%%%%%%%%%%
%%%% ---- SUBSECTION ---- %%%%
%%%%%%%%%%%%%%%%%%%%%%%%%%

\subsection*{Case $0\leq r' \leq  c< r < \varrho$}
We write
\begin{align*}
  \int_{r'< |z| < r} \frac{dm(z)}{|z-c| \, | z - \varrho^2/c|}
  & = \int_{r'/\varrho}^x \left( \int_0^{2\pi} \frac{d\theta}{\big|1-\frac{s}{x} e^{i\theta}\big| \, |1-sxe^{i\theta}|} \right) s\, ds\\
  & \qquad + x \int_x^{r/\varrho} \left( \int_0^{2\pi} \!\frac{d\theta}{\big|1-\frac{x}{s} e^{i\theta}\big| \, |1-sxe^{i\theta}|} \right)  ds.
\end{align*}
The first integral is estimated similarly as in the case above:
\begin{align*}
  & \int_{r'/\varrho}^x \left( \int_0^{2\pi} \frac{d\theta}{\big|1-\frac{s}{x} e^{i\theta}\big| \, |1-sxe^{i\theta}|} \right) s\, ds\\
  & \qquad \lesssim \frac{c^2}{\varrho^2} \int_{t(r'/\varrho)}^{\infty} \frac{\log t}{t(t-x^2)} \, dt + \log\frac{\varrho^2-c r'}{\varrho^2-c^2}\\
  & \qquad \leq \frac{c}{\varrho} \cdot 
  \underbrace{\frac{c-r'}{\varrho-r'}  \left( 2+\log 2 + \log \frac{\varrho-r'}{c-r'} \right)}_{\text{increasing in $c\in(r',r)$}}
  + \log\frac{\varrho^2-c r'}{\varrho^2-c^2} \\
  & \qquad \leq \frac{c}{\varrho} \cdot \frac{r-r'}{\varrho-r'}  \left( 2+\log 2 + \log \frac{\varrho-r'}{r-r'} \right)
  + \log\frac{\varrho^2-c r'}{\varrho^2-c^2}.
\end{align*}
To the second integral, we apply Lemma~\ref{lemma:pq} and obtain
\begin{equation*}
  x \int_x^{r/\varrho} \left( \int_0^{2\pi} \!\frac{d\theta}{\big|1-\frac{x}{s} e^{i\theta}\big| \, |1-sxe^{i\theta}|} \right)  ds
  \asymp x \int_x^{r/\varrho} \!\!\frac{1}{1-sx} \left( \log\frac{1-sx}{1-x/s} + 1\right) ds,
\end{equation*}
which will be integrated in two parts. By Lemma~\ref{lemma:intest}, the first part gives
\begin{align*}
  x \int_x^{r/\varrho} \frac{1}{1-sx} \, \log\frac{1-sx}{1-x/s} ds
  & = x \int_x^{r/\varrho} \!\frac{\log s}{1-sx} \, ds + x \int_x^{r/\varrho} \!\!\frac{1}{1-sx} \, \log\frac{1-sx}{s-x} \, ds\\
  & \leq  x \int_x^{r/\varrho} \frac{1}{1-sx} \, \log\frac{1-sx}{s-x} \, ds\\
  & = \frac{c}{\varrho} \int_{\frac{\varrho^2-cr}{\varrho(r-c)}}^\infty \frac{\log t}{t (t+x)} \, dt
  \leq \frac{c}{\varrho} \int_{\frac{\varrho^2-cr}{\varrho(r-c)}}^\infty \frac{\log t}{t^2} \, dt\\
  & \leq \frac{c}{\varrho} \cdot \frac{\varrho(r-c)}{\varrho^2-cr} \left( 1 + \log \frac{\varrho^2-cr}{\varrho(r-c)} \right)\\
  & \leq \frac{c}{\varrho} \cdot \underbrace{\frac{r-c}{\varrho-c}  
    \left( 1+\log 2 + \log \frac{\varrho-c}{r-c} \right)}_{\text{decreasing in $c\in(r',r)$}}\\
  & \leq \frac{c}{\varrho} \cdot \frac{r-r'}{\varrho-r'}  \left( 1+\log 2 + \log \frac{\varrho-r'}{r-r'} \right),
\end{align*}
while the second part is
\begin{equation*}
  x \int_x^{r/\varrho} \frac{1}{1-sx} \,  ds = \log\frac{\varrho^2 - c^2}{\varrho^2-cr}.
\end{equation*}
In conclusion,
\begin{align*}
  \int_{r'< |z| < r} \frac{dm(z)}{|z-c| \, | z - \varrho^2/c|} 
  & \lesssim 2 \, \frac{c}{\varrho} \, \frac{r-r'}{\varrho-r'}  
  \left( 2+\log 2 + \log \frac{\varrho-r'}{r-r'} \right) + \log\frac{\varrho^2 - c r'}{\varrho^2-cr}.
\end{align*}

%%%%%%%%%%%%%%%%%%%%%%%%%%
%%%% ---- SUBSECTION ---- %%%%
%%%%%%%%%%%%%%%%%%%%%%%%%%

\subsection*{Case $0<c < r' < r < \varrho$}
As above, by Lemma~\ref{lemma:intest}, we deduce
\begin{align*}
  &  \int_{r'< |z| < r} \frac{dm(z)}{|z-c| \, | z - \varrho^2/c|}\\
  & \qquad \asymp x \int_{r'/\varrho}^{r/\varrho} \frac{1}{1-sx} \left( \log\frac{1-sx}{1-x/s} + 1\right) ds\\
  & \qquad\leq  x \int_{r'/\varrho}^{r/\varrho} \frac{1}{1-sx} \, \log\frac{1-sx}{s-x} \, ds +  x \int_{r'/\varrho}^{r/\varrho} \frac{1}{1-sx} \, ds\\
  & \qquad= \frac{c}{\varrho} \int_{\frac{\varrho^2-cr}{\varrho(r-c)}}^{\frac{\varrho^2-cr'}{\varrho(r'-c)}} \frac{\log t}{t (t+x)} \, dt + \log\frac{\varrho^2-cr'}{\varrho^2-cr}\\
  & \qquad\leq \frac{c}{\varrho}  \cdot \frac{\varrho(\varrho^2-c^2)(r-r')}{(\varrho^2-cr)(\varrho^2-cr')} \left( 1 + \log \frac{\varrho^2-cr}{\varrho(r-c)} \right)+ \log\frac{\varrho^2-cr'}{\varrho^2-cr}.\\
  & \qquad\leq 2 \cdot \frac{c}{\varrho}  \cdot \underbrace{\frac{r-r'}{\varrho-c} \left( 1 + \log 2 + \log \frac{\varrho-c}{r-r'} \right)}_{\text{increasing in $c\in(0,r')$}}+ \log\frac{\varrho^2-cr'}{\varrho^2-cr}\\
  & \qquad\leq 2 \cdot \frac{c}{\varrho}  \cdot \frac{r-r'}{\varrho-r'} \left( 1 + \log 2 + \log \frac{\varrho-r'}{r-r'} \right) + \log\frac{\varrho^2-cr'}{\varrho^2-cr}.
\end{align*}

The estimates from the three cases above can be combined into
\begin{equation*}
  \begin{split}
    & \int_{r'<|z|<r} \frac{dm(z)}{|z-c_m|\, |z-\varrho^2/\overline{c}_m|}\\
    & \qquad \lesssim \frac{2|c_m|}{\varrho} \cdot \underbrace{\frac{r-r'}{\varrho-r'} 
      \left( 2 + \log 2 + \log \frac{\varrho-r'}{r-r'} \right)}_{\text{$\leq \, 2+\log 2$, and decays to $0$ as $r' \to r$}} 
    + \log\frac{\varrho^2 - |c_m| r'}{\varrho^2-|c_m|r},
%    \quad.
  \end{split}
\end{equation*}
for any $0<|c_m|<\varrho$.
This puts us in a~position to estimate $I_2$. We deduce
\begin{align*}
  I_2 & = 2\pi (r-r')\, n(0) + \sum_{\varepsilon<|c_m | < \varrho} \frac{\varrho^2-|c_m|^2}{|c_m|} \,  
  \int_{r'<|z|<r} \frac{dm(z)}{|z-c_m|\, |z-\varrho^2/\overline{c}_m|}\\
  & \lesssim  2\pi (r-r')\, n(0) + \frac{2}{\varrho} \, \frac{r-r'}{\varrho-r'} \left( 2 + \log 2 + 
    \log \frac{\varrho-r'}{r-r'} \right)  \sum_{\varepsilon<|c_m | < \varrho} \big( \varrho^2-|c_m|^2 \big)\\
  & \qquad +   \sum_{\varepsilon<|c_m | < \varrho}  \frac{\varrho^2-|c_m|^2}{|c_m|}\, \log\frac{\varrho^2 - |c_m| r'}{\varrho^2-|c_m|r},
\end{align*}
where $0<\varepsilon<\varrho$ is chosen such that there are no $c_m$-points in $D(0,\varepsilon)\setminus\{0\}$.
We write the sums as Riemann-Stieltjes integrals and then integrate by parts, which yields 
\begin{align*}
  \sum_{\varepsilon<|c_m | < \varrho} \big( \varrho^2-|c_m|^2 \big)
  & = \int_\varepsilon^\varrho \big( \varrho^2-t^2 \big) \, dn(t) \leq 2 \varrho^2 \int_\varepsilon^\varrho \frac{n(t)}{t} \, dt\\
  & =  2\varrho^2 \big( N(\varrho) - N(\varepsilon) \big)
  \leq 2 \varrho^2 \big( 2 T(\varrho,f) + O(1) \big).
\end{align*}
By using the estimate $\log x \leq x -1$, which holds for any positive $x$, we obtain
\begin{align*}
  & \sum_{\varepsilon<|c_m | < \varrho}  \frac{\varrho^2-|c_m|^2}{|c_m|}\, \log\frac{\varrho^2 - |c_m| r'}{\varrho^2-|c_m|r}\\
  & \qquad = \int_\varepsilon^\varrho \frac{\varrho^2-t^2}{t}\, \log\frac{\varrho^2 - tr'}{\varrho^2-tr} \, dn(t)\\
  & \qquad \leq 2 \int_\varepsilon^\varrho \log\frac{\varrho^2 - t r'}{\varrho^2- tr} \, n(t) \, dt
  + \int_\varepsilon^\varrho \frac{\varrho^2-t^2}{t}\, \log\frac{\varrho^2 - tr'}{\varrho^2-tr} \, \frac{n(t)}{t}\, dt\\
  & \qquad \leq 2 \varrho \log\frac{\varrho - r'}{\varrho- r} \int_\varepsilon^\varrho \frac{n(t)}{t} \, dt
  + \int_\varepsilon^\varrho \frac{\varrho^2-t^2}{t} \left( \frac{\varrho^2 - tr'}{\varrho^2-tr} - 1 \right) \, \frac{n(t)}{t}\, dt\\
  & \qquad \leq \left( 2  \varrho \log\frac{\varrho - r'}{\varrho- r} + 2 (r-r') \right) \times \big( 2 T(\varrho,f) + O(1) \big).
\end{align*}
Note that 
\begin{equation*}
  n(0) = n(0) \int_{\varrho/e}^\varrho \frac{dt}{t}
  \leq \int_{\varrho/e}^\varrho \frac{n(t)}{t} \, dt \leq 2 T(\varrho,f)+O(1).
\end{equation*}
Putting the obtained estimates together, we deduce
\begin{equation*}
  \begin{split}
    I_2 & \lesssim \left( 4 \varrho \frac{r-r'}{\varrho-r'} 
      \left( 2 + \log 2 + \log \frac{\varrho-r'}{r-r'} \right) +  (2\pi + 2)(r-r') + 2\varrho \log\frac{\varrho-r'}{\varrho-r} \right)\\
    & \qquad  \times \big( 2 T(\varrho,f) + O(1) \big),
    \quad 0\leq r'<r<\varrho.
  \end{split}
\end{equation*}
This completes the proof of Theorem~\ref{thm:nn}.

%%%%%%%%%%%%%%%%%%%%%%%
%%%% ---- SECTION ---- %%%%
%%%%%%%%%%%%%%%%%%%%%%%

\section{Proofs of Corollaries~\ref{cor:nn} and \ref{cor:psi}} \label{sec:nn}

The following proof is a~straight-forward application of Theorem~\ref{thm:nn},
or more precisely, the estimate~\eqref{eq:nice}.

%%%%%%%%%%%%%%%%%%%%%%
%%%% ---- PROOF ---- %%%%
%%%%%%%%%%%%%%%%%%%%%%

\begin{proof}[Corollary~\ref{cor:nn}]
Let $\varrho_0=r$ and $\varrho_{j+1} = (R+\varrho_j)/2$ for
$j=0,\dotsc,m-1$.
%By \cite[Lemma~2.5.1]{Z:2010}, see also \cite{H:1955}, 
%By \cite[Corollary~3.2.3]{CY:year},
Using the estimate \eqref{deri} inductively,
we conclude
\begin{align*}
  T\big(\varrho_1, f^{(m)}\big)  
  & \lesssim 1 + \log^+ \frac{\varrho_2}{\varrho_1(\varrho_2-\varrho_1)} + T\big(\varrho_2,f^{(m-1)}\big)\\
  & \lesssim \dotsb \lesssim 1 + \log^+\frac{1}{R-r} + T(R,f)
\end{align*}
for any $m=j, \dotsc, k-1$. By H\"older's inequality and \eqref{eq:nice},
\begin{align*}
  \int_{r'<|z|<r}  \bigg| \frac{f^{(k)}(z)}{f^{(j)}(z)}  \bigg|^{\frac{1}{k-j}} \, dm(z)
  & = \int_{r'<|z|<r}  \prod_{m=j}^{k-1} \bigg| \frac{f^{(m+1)}(z)}{f^{(m)}(z)}  \bigg|^{\frac{1}{k-j}} \, dm(z)\\
  & \leq \prod_{m=j}^{k-1} \left( \int_{r'<|z|<r} \bigg| \frac{f^{(m+1)}(z)}{f^{(m)}(z)}  \bigg| \, dm(z)\right)^{\frac{1}{k-j}}\\
  & \lesssim \prod_{m=j}^{k-1} \left( \varrho_1 \log\frac{e(\varrho_1-r')}{\varrho_1-r} \, \big( T(\varrho_1,f^{(m)}) + 1 \big) \right)^{\frac{1}{k-j}}.
\end{align*}
The assertion follows by combining the obtained estimates.
\end{proof}

%%%%%%%%%%%%%%%%%%%%%%
%%%% ---- PROOF ---- %%%%
%%%%%%%%%%%%%%%%%%%%%%

\begin{proof}[Corollary~\ref{cor:psi}]
We consider the case $k=1$ and $j=0$ only. The general case follows as in the proof of Corollary~\ref{cor:nn}.
Define the sequence $\{r_n\}_{n=0}^\infty$ such that $r_0=R/2$ and 
\begin{equation} \label{eq:ind}
  r_n = \frac{r_{n-1}+s(r_{n-1})}{2} = r_{n-1} + \frac{1}{2} \, \big( s(r_{n-1}) - r_{n-1}\big), \quad n\in\N.
\end{equation}
Since $\{r_n\}_{n=0}^\infty \subset [1/2,R)$ is increasing, there exists a limit $\lim_{n\to\infty} r_n = \alpha\leq R$.
Equation \eqref{eq:ind} implies  $2\alpha =\alpha + s(\alpha)$, which is possible only if $\alpha=R$.
We conclude $\lim_{n\to\infty} r_n=R$.

By \eqref{eq:nice}, we obtain
\begin{align*}
  \int_{r_{n-1}<|z|<r_n}  \left| \frac{f'(z)}{f(z)}  \right| \, dm(z)
  & \lesssim \log\frac{e\big(s(r_{n-1})-r_{n-1}\big)}{s(r_{n-1})-r_n} \, \Big( T\big(s(r_{n-1}),f\big) + 1 \Big)\\
  & \lesssim T\big(s(r_{n-1}),f\big) + 1, \quad n\in\N.
\end{align*}
Let
\begin{equation*}
  G_n = \left\{ r \in [r_{n-1},r_n) : \int_0^{2\pi} \left| \frac{f'(re^{i\theta})}{f(re^{i\theta})} \right|  d\theta 
    \geq K \, \frac{T\big(s(r_{n-1}),f\big) + 1}{ r_{n-1} (r_n-r_{n-1})} \right\}, \quad n\in\N,
\end{equation*}
where $K$ is a~positive constant defined later.
By the Chebyshev-Markov inequality,
\begin{align*}
  \int_{r_{n-1}<|z|<r_n}  \left| \frac{f'(z)}{f(z)}  \right| \, dm(z)
  & = \int_{r_{n-1}}^{r_n} \int_0^{2\pi}  \left| \frac{f'(re^{i\theta})}{f(re^{i\theta})}  \right|  d\theta \, r\, dr\\
  & \geq K \, \frac{T\big(s(r_{n-1}),f\big) + 1}{ r_{n-1} (r_n-r_{n-1})} \, \int_{G_n}\, r\, dr\\
  & \geq K \, \frac{T\big(s(r_{n-1}),f\big) + 1}{ (r_n-r_{n-1})} \, m(G_n), \quad n\in\N.
\end{align*}
Therefore $m(G_n) \lesssim K^{-1} (r_n-r_{n-1})$ for $n\in\N$. Define $E = [0,R/2) \cup \bigcup_{n\in\N} G_n$.

If $r\in [r_{n-1},r_n)$ for $n\in\N$, then
\begin{align*}
  \frac{m\big( E \cap [r,R)\big)}{R-r} 
  & \leq \frac{\sum_{k=n}^\infty m(G_k)}{R-r_n}
  \lesssim \frac{1}{K} \cdot \frac{\sum_{k=n}^\infty (r_k-r_{k-1})}{R-r_n}
  = \frac{1}{K} \cdot \frac{ R -r_{n-1}}{R-r_n}\\
  & \leq \frac{1}{K} \cdot \frac{ s(r_{n-1}) -r_{n-1}}{s(r_{n-1})-r_n} = \frac{2}{K}.
\end{align*}
Here we have used the property that $x \mapsto (x-r_{n-1})/(x-r_n)$ is decreasing
and positive for $x>r_n$. We deduce $\overline{d}(E) \leq \delta$, if $0<K<\infty$ is sufficiently large.
If $r\in [r_{n-1},r_n) \setminus E$ for $n\in\N$, then
\begin{align*}
  \int_0^{2\pi} \left| \frac{f'(re^{i\theta})}{f(re^{i\theta})} \right|  d\theta 
  \leq K \, \frac{T\big(s(r_{n-1}),f\big) + 1}{ r_{n-1} (r_n-r_{n-1})}
  \leq \frac{4K}{R}\,\frac{T\big(s(r_{n-1}),f\big) + 1}{s(r_{n-1})-r_{n-1}}.
\end{align*}
The assertion follows since $r\mapsto T(s(r),f)$ is increasing and $r \mapsto s(r)-r$ is decreasing.
\end{proof}

%%%%%%%%%%%%%%%%%%%%%%%
%%%% ---- SECTION ---- %%%%
%%%%%%%%%%%%%%%%%%%%%%%

\section{Proof of Theorem~\ref{limsup-thm}} \label{sec:jh}

Before the proof of Theorem~\ref{limsup-thm}, we consider auxiliary results.

%%%%%%%%%%%%%%%%%%%%%%%%%%%%
%%%% ---- OLDTHEOREM ---- %%%%
%%%%%%%%%%%%%%%%%%%%%%%%%%%%

\begin{theorem}\label{Kim-thm}
{\rm \cite[Theorem~2.1]{K:1969}} Let $f_1,\ldots,f_k$ be linearly
independent solutions of \eqref{ldek_gen}, where
$A_0,\ldots,A_{k-2}$ are analytic in $D(0,R)$. Let
\begin{equation}\label{ratios-plane}
  y_1=\frac{f_1}{f_k},\ \ldots ,\ y_{k-1}=\frac{f_{k-1}}{f_k},
\end{equation}
and let $W_j$ be the determinant defined by
\begin{equation}\label{determinant-plane}
  W_j=\left|
    \begin{array}{cccc}
      y_1'\ & y_2'\ & \cdots \ & y_{k-1}'\\
      \vdots & \vdots && \vdots \\
      y_1^{(j-1)}\ & y_2^{(j-1)}\ & \cdots & y_{k-1}^{(j-1)}\\
      y_1^{(j+1)}\ & y_2^{(j+1)}\ & \cdots & y_{k-1}^{(j+1)}\\
      \vdots & \vdots& \ddots &\vdots\\
      y_1^{(k)}\ & y_2^{(k)}\ & \cdots & y_{k-1}^{(k)}
    \end{array}
  \right|,\quad j=1,\ldots, k.
\end{equation}
Then
\begin{equation}\label{coefficients-rep-plane}
  A_j=\sum_{i=0}^{k-j} (-1)^{2k-i}\delta_{ki}\left(\begin{array}{c} k-i\\ k-i-j\end{array}\right)
  \frac{W_{k-i}}{W_k} \frac{\left(\sqrt[k]{W_k}\right)^{(k-i-j)}}{\sqrt[k]{W_k}}
\end{equation}
for all $j=0,\ldots,k-2$, where $\delta_{kk}=0$ and $\delta_{ki}=1$ otherwise.
\end{theorem}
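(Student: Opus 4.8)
The plan is to reduce \eqref{ldek_gen} to an equation of order $k-1$ by the substitution $f=uf_k$, to recover the coefficients of that reduced equation as Wronskian-type determinants in $y_1,\dots,y_{k-1}$, and finally to invert the substitution by a Leibniz-rule computation.

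First I would insert $f=uf_k$ into \eqref{ldek_gen}. Expanding each $(uf_k)^{(m)}$ by the Leibniz rule, collecting terms according to the order of the derivative of $u$, and using that $f_k$ solves \eqref{ldek_gen} (and $A_{k-1}\equiv0$), one obtains
\begin{equation*}
  \sum_{l=0}^{k} C_l\,u^{(l)}=0,\qquad
  C_l=\sum_{m=l}^{k}\binom{m}{l}A_m f_k^{(m-l)},
\end{equation*}
with the convention $A_k:=1$; here $C_0=\sum_{m=0}^{k}A_m f_k^{(m)}=0$ because $f_k$ is a solution, and $C_k=f_k$. Hence $v:=u'$ satisfies the equation $\sum_{l=1}^{k}C_l v^{(l-1)}=0$ of order $k-1$, and the functions $y_i'$, $i=1,\dots,k-1$, with $y_i$ as in \eqref{ratios-plane}, form a fundamental system for it. Their linear independence, together with the fact that $W_k=\det(y_i^{(l)})_{1\le i,l\le k-1}$ is not identically zero, follows from the classical identity $W(f_1,\dots,f_k)=(-1)^{k+1}f_k^{\,k}W_k$ (obtained from $W(gh_1,\dots,gh_k)=g^kW(h_1,\dots,h_k)$ with $g=f_k$, together with an expansion along the column corresponding to $y_k\equiv1$) and the fact that $W(f_1,\dots,f_k)$ is a nonzero constant by Abel's identity, since $A_{k-1}\equiv0$.

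Next I would read off the $C_l$ from the fundamental system. Writing $\sum_{l=1}^{k}C_l y_i^{(l)}=0$ for $i=1,\dots,k-1$ gives a homogeneous linear system of $k-1$ equations in $C_1,\dots,C_k$; solving it by Cramer's rule (border the coefficient matrix by a repeated row and expand the vanishing determinant) and fixing the scaling by $C_k=f_k$ yields $C_l=(-1)^{l-k}\,(f_k/W_k)\,W_l$ for $l=1,\dots,k$, with $W_l$ as in \eqref{determinant-plane}. Combining this with the relation $\sqrt[k]{W_k}=c\,f_k^{-1}$ for a constant $c$ (a restatement of $W(f_1,\dots,f_k)=(-1)^{k+1}f_k^{\,k}W_k$, with $\sqrt[k]{\cdot}$ understood as that branch), the quotient $(\sqrt[k]{W_k})^{(k-i-j)}/\sqrt[k]{W_k}$ becomes $f_k\,(f_k^{-1})^{(k-i-j)}$, and after the substitution $n=k-i$ the claimed formula \eqref{coefficients-rep-plane} reduces to the purely algebraic identity
\begin{equation*}
  A_j=\sum_{n=j}^{k}\binom{n}{j}\,C_n\,\big(f_k^{-1}\big)^{(n-j)},\qquad j=0,\dots,k-2;
\end{equation*}
the factor $\delta_{ki}$ in \eqref{coefficients-rep-plane} is precisely what discards the (vanishing, since $C_0=0$) term $n=0$ that would otherwise occur when $j=0$.

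Finally I would verify this identity directly: substituting the definition of $C_n$, interchanging the order of summation over $n$ and $m$, and using $\binom{n}{j}\binom{m}{n}=\binom{m}{j}\binom{m-j}{n-j}$, the inner sum over $n$ collapses by the Leibniz rule to $(f_k^{-1}f_k)^{(m-j)}=(1)^{(m-j)}$, which equals $1$ when $m=j$ and $0$ otherwise, so that only the term $m=j$ survives and it equals $A_j$. The main obstacle I expect is bookkeeping rather than conceptual: one must keep all signs and index shifts consistent across the reduction of order, the Cramer's-rule identification of $C_l$ with $W_l$, and the reindexing $n=k-i$ in \eqref{coefficients-rep-plane}. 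The genuinely analytic input is limited to Abel's identity and the nonvanishing of $W(f_1,\dots,f_k)$; everything else is formal differentiation.
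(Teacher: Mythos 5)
Your derivation is correct. Note, however, that the paper itself does not prove this statement; it merely cites Kim's Theorem~2.1 and leaves the argument to the reference. What you have written is therefore a self-contained reconstruction of the cited result, not a variant of a proof appearing in the text. The route you take — reduce the order by the substitution $f=uf_k$, identify the reduced equation's coefficients $C_l$ with the Wronskian cofactors $W_l$ via Cramer's rule normalized by $C_k=f_k$, use Abel's identity together with $W(f_1,\dots,f_k)=(-1)^{k+1}f_k^{\,k}W_k$ to justify $\sqrt[k]{W_k}=c/f_k$, and finally collapse the double sum with the Leibniz rule and the identity $\binom{n}{j}\binom{m}{n}=\binom{m}{j}\binom{m-j}{n-j}$ — is the natural argument behind Kim's formula, and I verified the bookkeeping: the Laplace expansion of $W(y_1',\dots,y_{k-1}',v)$ along the last column indeed gives the sign $(-1)^{l+k}$ matching the $(-1)^{2k-i}$ in \eqref{coefficients-rep-plane} after the substitution $n=k-i$, the factor $\delta_{ki}$ correctly kills the spurious $n=0$ term that arises only when $j=0$, and the inner sum telescopes to $(f_k\cdot f_k^{-1})^{(m-j)}=\delta_{mj}$ exactly as you claim. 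The only small point worth stating more explicitly is why $C_l=(-1)^{l+k}(f_k/W_k)W_l$ as an identity of meromorphic functions on all of $D(0,R)$: both sides are the coefficients of a linear ODE of order $k-1$ of which $y_1',\dots,y_{k-1}'$ form a fundamental system, and ODEs with coinciding fundamental systems are proportional; matching the leading coefficients $C_k=f_k$ and $(-1)^{2k}W_k=W_k$ fixes the factor, and agreement off the zero set of $f_k$ extends by analyticity.
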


For a fixed branch of the $k$th root, there exists a constant
$C\in\C\setminus \{0\}$ such that
\begin{equation}\label{well-defined}
  \sqrt[k]{W_k}=\frac{1}{Cf_k},
\end{equation}
see \cite[Eq.~(2.6)]{K:1969}. This shows that $\sqrt[k]{W_k}$
is a well-defined meromorphic function in $D(0,R)$. For an alternative way to write
the coefficients $A_0,\ldots,A_{k-2}$ in terms of the
solutions of \eqref{ldek_gen}, see \cite[Proposition 1.4.7]{L:1993}.

%%%%%%%%%%%%%%%%%%%%%%
%%%% ---- LEMMA ---- %%%%
%%%%%%%%%%%%%%%%%%%%%%

\begin{lemma}\label{mero-coeffs}
Let $r<s(r)<R$, and let $g_1,\ldots, g_k$ be linearly independent
meromorphic solutions of the linear differential equation
\begin{equation}\label{lde-plane-preliminaryM}
  g^{(k)}+B_{k-1}g^{(k-1)}+\cdots +B_1g'+B_0g=0
\end{equation}
with coefficients $B_0,\ldots,B_{k-1}$ meromorphic in $D(0,R)$. Then
\begin{equation*}
   \int_{D(0,r)} |B_j(z)|^\frac{1}{k-j}\, dm(z)
  \lesssim s(r) \log \frac{e\, s(r)}{s(r)-r} \left( S(r)+\max_{1\leq l\leq k} T(s(r),g_l) \right),
\end{equation*}
for all $j=0,\ldots,k-1$. Here
\begin{equation*}
S(r)=1+\log^+\frac{1}{s(r)-r}.
\end{equation*}
\end{lemma}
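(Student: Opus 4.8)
The plan is to express each coefficient $B_j$ via the Kim representation (Theorem~\ref{Kim-thm}) applied to the meromorphic solutions $g_1,\dots,g_k$ of \eqref{lde-plane-preliminaryM}, after reducing to the case $B_{k-1}\equiv 0$. Since the general equation \eqref{lde-plane-preliminaryM} is not of the form \eqref{ldek_gen}, I would first recall that there is a standard substitution $g = h \exp(-\frac1k\int B_{k-1})$ (or work directly with the formulas in \cite[Proposition~1.4.7]{L:1993}) turning \eqref{lde-plane-preliminaryM} into an equation with vanishing subleading coefficient; this affects the $B_j$ by polynomial expressions in $B_{k-1}$ and its derivatives, all of which are again logarithmic-derivative type terms. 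Alternatively, and more cleanly, one applies Theorem~\ref{Kim-thm} with the $g_l$ in place of the $f_l$, noting that \eqref{coefficients-rep-plane} expresses $A_j = B_j$ as a finite sum of terms of the form $\dfrac{W_{k-i}}{W_k}\cdot\dfrac{(\sqrt[k]{W_k})^{(k-i-j)}}{\sqrt[k]{W_k}}$. Using \eqref{well-defined}, $\sqrt[k]{W_k} = (Cg_k)^{-1}$, so the second factor is (up to sign) $\big((1/g_k)^{(k-i-j)}\big)/(1/g_k)$, a generalized logarithmic derivative of the meromorphic function $1/g_k$ of order $k-i-j \le k-j$. For the ratios $W_{k-i}/W_k$, one expands the Wronskian-type determinants $W_m$ of the quotients $y_l = g_l/g_k$ and checks that $W_{k-i}/W_k$ is, after clearing, a sum of products of generalized logarithmic derivatives $\big(y_l^{(a)}\big)/\big(y_l^{(b)}\big)$ and of $1/g_k$; each such quotient $(y_l^{(a)})/(y_l^{(b)})$ is a generalized logarithmic derivative of the meromorphic function $y_l^{(b)}$ (or a ratio that telescopes into such), of order at most $k$.

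The key analytic input is then Corollary~\ref{cor:nn}, which bounds $\int_{r'<|z|<r}|F^{(k)}/F^{(j)}|^{1/(k-j)}\,dm$ by $R\log\frac{e(R-r')}{R-r}\big(1+\log^+\frac{1}{R-r}+T(R,F)\big)$ for meromorphic $F$; applied with $r'=0$, $R = s(r)$, and $F$ running over the finitely many meromorphic functions $1/g_k$ and $y_l^{(b)}$ (equivalently, over quotients built from $g_1,\dots,g_k$), each factor appearing in $B_j$ contributes, after raising to the power $1/(k-j)$, an integral bounded by a constant times $s(r)\log\frac{e\,s(r)}{s(r)-r}\big(S(r)+\max_l T(s(r),g_l)\big)$, where $S(r) = 1+\log^+\frac{1}{s(r)-r}$ absorbs the $\log^+\frac{1}{R-r}$ term and $T(s(r),F) \lesssim T(s(r),1/g_k) + \sum_l T(s(r), y_l^{(b)})$ is controlled by $\max_l T(s(r),g_l)$ via the first main theorem (for $1/g_k$) and via \eqref{deri} together with the elementary estimates $T(\varrho,F_1F_2)\le 2\max T(\varrho,F_i)$, $T(\varrho,F_1/F_2)\le T(\varrho,F_1)+T(\varrho,F_2)+O(1)$ (for the quotients $y_l$ and their derivatives). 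I would invoke the subadditivity/submultiplicativity of $\exp$–free quantities here rather than the $(\Psi,\varphi)$-machinery, since the statement is a plain integral inequality.

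The final step is bookkeeping: $B_j$ is a \emph{sum} of boundedly many products, each product involving at most $k$ factors each of which is a generalized logarithmic derivative; raising $|B_j|$ to the $1/(k-j)$ and applying H\"older's inequality over the disc $D(0,r)$ (exactly as in the proof of Corollary~\ref{cor:nn}) distributes the exponent among the factors, so $\int_{D(0,r)}|B_j|^{1/(k-j)}\,dm$ is dominated by a finite sum of products of quantities each of the form $\big(\int_{D(0,r)}|F^{(a)}/F^{(b)}|^{1/(a-b)}\,dm\big)^{\theta}$ with $\sum\theta = 1$, and each such quantity is $\lesssim s(r)\log\frac{e\,s(r)}{s(r)-r}\big(S(r)+\max_l T(s(r),g_l)\big)$ by the previous paragraph; a product of such with exponents summing to $1$ is bounded by the same expression, giving the claim. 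The main obstacle I anticipate is not any single estimate but the algebra of verifying that every entry of the determinants $W_m$ — after dividing by $W_k$ — really does reduce to generalized logarithmic derivatives of meromorphic functions of order $\le k$ with no leftover "bad" factors; this is where one must be careful to use the structure $y_l = g_l/g_k$ and $\sqrt[k]{W_k}=(Cg_k)^{-1}$ simultaneously, so that the powers of $1/g_k$ cancel correctly and only controllable logarithmic-derivative terms survive.
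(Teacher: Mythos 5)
Your plan and the paper's proof share the final analytic machinery (Corollary~\ref{cor:nn} plus H\"older), but they diverge at the structural level, and your version has a real gap precisely where you say you anticipate the "main obstacle." The paper does not attempt a one-shot decomposition of $W_{k-i}/W_k$; it runs a Frank--Hennekemper induction on the order, using the representation $B_{n,j}=-W(g_1,\dots,g_n)^{-1}W_j(g_1,\dots,g_n)$ from \cite[Proposition~1.4.7]{L:1993} (which, unlike Theorem~\ref{Kim-thm}, works for a general linear ODE with arbitrary meromorphic $B_{k-1}$ and is phrased in the $g_l$ themselves rather than in the ratios $y_l$), and the key recursion
\[
B_{n+1,0}=B_{n,0}'-B_{n,0}h_{n+1},\qquad
B_{n+1,j}=B_{n,j}'+B_{n,j-1}-B_{n,j}h_{n+1},\qquad
B_{n+1,n}=B_{n,n-1}-h_{n+1},
\]
where $h_{n+1}$ is the logarithmic derivative of a single meromorphic Wronskian ratio. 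At each level the induction hypothesis controls $\int|B_{n,\cdot}|^{1/(n-\cdot)}$, Corollary~\ref{cor:nn} controls $\int|B_{n,\cdot}'/B_{n,\cdot}|$ and $\int|h_{n+1}|$ once $T(s(r),B_{n,\cdot})$ and $T(s(r),W_{n+1}/W_n)$ are bounded by $\max_l T(s(\cdot),g_l)$ via \eqref{deri} and \eqref{W1}, and H\"older combines the pieces. No global algebraic identity for $W_{k-i}/W_k$ is ever proved or needed.

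The gap in your route is that the claim ``$W_{k-i}/W_k$ is, after clearing, a sum of products of generalized logarithmic derivatives $y_l^{(a)}/y_l^{(b)}$ and of $1/g_k$'' is false as stated. Already for $k=3$, with $W_3=\begin{vmatrix}y_1'&y_2'\\ y_1''&y_2''\end{vmatrix}$ and $W_2=\begin{vmatrix}y_1'&y_2'\\ y_1'''&y_2'''\end{vmatrix}$, a short computation putting $u=y_2'/y_1'$ gives
\[
\frac{W_2}{W_3}=2\,\frac{y_1''}{y_1'}+\frac{u''}{u'},
\]
and $u''/u'$ is a logarithmic derivative of the \emph{auxiliary} meromorphic function $(y_2'/y_1')'$, not of any $y_l$ or of $1/g_k$. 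For higher $k$ this nesting compounds (ratios of Wronskians of ratios of Wronskians), and each level of nesting costs an application of \eqref{deri}, so one must also keep track of how many times the radius is inflated from $r$ to $s(r)$---this is the point hidden behind the paper's remark about introducing ``sufficiently many $\varrho_j$'s.'' Your parenthetical ``(equivalently, over quotients built from $g_1,\dots,g_k$)'' gestures at the right objects, but supplying this decomposition explicitly is exactly the content of the lemma, and the induction is how Frank--Hennekemper and Laine organize it without ever writing the decomposition down. Separately, applying Theorem~\ref{Kim-thm} requires $B_{k-1}\equiv0$, and the transformation $g=h\exp(-\tfrac1k\int B_{k-1})$ is not legitimate here because $\int B_{k-1}$ is in general multivalued when $B_{k-1}$ is merely meromorphic with non-integer residues; the representation in \cite[Proposition~1.4.7]{L:1993} that the paper uses avoids this issue entirely.
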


%%%%%%%%%%%%%%%%%%%%%%
%%%% ---- PROOF ---- %%%%
%%%%%%%%%%%%%%%%%%%%%%

\begin{proof}
We will follow the
reasoning used in proving \cite[Lemma~7.7]{L:1993}, originally developed by Frank and Hennekemper.
We proceed by induction, starting from the case $k=1$. Hence, we suppose that $B_0$ is meromorphic
in $D(0,R)$, and that $g'+B_0g=0$ has a non-trivial meromorphic solution $g_1$.
Then Corollary~\ref{cor:nn}, applied to $|B_0(z)|=|g_1'(z)/g_1(z)|$, gives us the assertion at once.
The more general case $g^{(k)}+B_0g=0$ with no
middle-term coefficients follows similarly.

Suppose next that we have proved the case $k=n\geq 1$. That is, we suppose that we have proved
the assertion for $n$ linearly independent meromorphic functions $g_1,\ldots,g_n$ solving
\begin{equation*}
  g^{(n)}+B_{n,n-1}g^{(n-1)}+\cdots+B_{n,1}g'+B_{n,0}g=0
\end{equation*}
with coefficients $B_{n,0},\ldots,B_{n,n-1}$ meromorphic in $D(0,R)$. Observe that the coefficients
$B_{n,0},\ldots,B_{n,n-1}$ are uniquely determined by
\begin{equation}\label{rep0}
  B_{n,j}=-W(g_1,\ldots,g_n)^{-1}W_j(g_1,\ldots, g_n),\quad j=0,\ldots,n-1,
\end{equation}
see \cite[Proposition~1.4.7]{L:1993}. Note that $W_j$ has a different meaning in Kim's result.

Consider $n+1$ linearly independent meromorphic functions $g_1,\ldots,g_n,g_{n+1}$.
Clearly, the Wronskian determinants $W(g_1,\ldots,g_n)$ and $W(g_1,\ldots,g_{n+1})$
do not vanish identically. Denote
\begin{equation}\label{definition-h}
  h_{n+1}=\left(\frac{d}{dz}\frac{W(g_1,\ldots,g_{n+1})}{W(g_1,\ldots,g_n)}\right)
  \bigg/\left(\frac{W(g_1,\ldots,g_{n+1})}{W(g_1,\ldots,g_n)}\right).
\end{equation}
Let $g$ be an arbitrary meromorphic function. Expanding $W(g_1,\ldots,g_{n+1},g)$ according to
the last column starting from the bottom right corner (which is associated with a positive sign
in the checkerboard pattern of signs for determinants), we get
\begin{equation}\label{W-eqn}
  \frac{W(g_1,\ldots,g_{n+1},g)}{W(g_1,\ldots,g_{n+1})}=g^{(n+1)}+\sum_{j=0}^nB_{n+1,j}g^{(j)},
\end{equation}
where
\begin{equation}\label{rep}
  B_{n+1,j}=-W(g_1,\ldots,g_{n+1})^{-1}W_j(g_1,\ldots, g_{n+1}),\quad j=0,\ldots,n.
\end{equation}
In particular, if $g\in\{g_1,\ldots,g_{n+1}\}$, then $W(g_1,\ldots,g_{n+1},g)\equiv 0$, and we
see from~\eqref{W-eqn} that the functions $g_1,\ldots,g_n,g_{n+1}$ are linearly independent
meromorphic solutions of the equation
\begin{equation*}
  g^{(n+1)}+\sum_{j=0}^nB_{n+1,j}g^{(j)}=0,
\end{equation*}
where the coefficients are given by \eqref{rep}.

Next we do some elementary computations with the Wronskian determinants appearing in the left-hand
side of \eqref{W-eqn}, see \cite[pp.~134--135]{L:1993}, and obtain the following representation
for the right-hand side of \eqref{W-eqn}:
\begin{align*}
  g^{(n+1)}+\sum_{j=0}^nB_{n+1,j}g^{(j)} & =  g^{(n+1)}+(B_{n,n-1}-h_{n+1})g^{(n)}\\
  & \qquad +\sum_{j=1}^{n-1}\left(B_{n,j}'+B_{n,j-1}-B_{n,j}h_{n+1}\right)g^{(j)}\\
  & \qquad  +\left(B_{n,0}'-B_{n,0}h_{n+1}\right)g.
\end{align*}
Comparing the corresponding coefficients, we deduce
\begin{equation}\label{coefficients}
  \left\{\begin{array}{rcl}
      B_{n+1,n} &=& B_{n,n-1}-h_{n+1},\\
      B_{n+1,j} &=& B_{n,j}'+B_{n,j-1}-B_{n,j}h_{n+1},\quad j=1,\ldots,n-1,\\
      B_{n+1,0} &=& B_{n,0}'-B_{n,0}h_{n+1}.
    \end{array}\right.
\end{equation}
H\"older's inequality yields
\begin{equation*}
  \begin{split}
    \int_{D(0,r)} &\left|B_{n+1,0}(z)\right|^\frac{1}{n+1}\, dm(z)\\
    &\leq \int_{D(0,r)}\left|B_{n,0}'(z)\right|^\frac{1}{n+1}\, dm(z)
    +\int_{D(0,r)}\left|B_{n,0}(z)h_{n+1}(z)\right|^\frac{1}{n+1}\, dm(z)\\
    &\leq \left(\int_{D(0,r)}\left|\frac{B_{n,0}'(z)}{B_{n,0}(z)}\right|\, dm(z)\right)^\frac{1}{n+1}
    \left(\int_{D(0,r)}\left|B_{n,0}(z)\right|^\frac{1}{n}\, dm(z)\right)^\frac{n}{n+1}\\
    &\qquad+\left(\int_{D(0,r)}\left|B_{n,0}(z)\right|^\frac{1}{n}\, dm(z)\right)^\frac{n}{n+1}
    \left(\int_{D(0,r)}\left|h_{n+1}(z)\right|\, dm(z)\right)^\frac{1}{n+1}.
  \end{split}
\end{equation*}
Using \eqref{rep0} and Corollary~\ref{cor:nn}, as well as \eqref{deri}, we get
\begin{equation*}
  \begin{split}
    \int_{D(0,r)} \bigg|\frac{B_{n,0}'(z)}{B_{n,0}(z)}\bigg|\,\,  dm(z)
    & \lesssim s(r) \log\frac{e\, s(r)}{s(r)-r} \left( S(r)+T\left(s(r),\frac{W_0}{W} \right) \right)\\
   % &\lesssim S(r)+\log^+T(s(r),W_0/W)\\
    & \lesssim s(r) \log\frac{e\, s(r)}{s(r)-r} \left( S(r)+ \max_{1\leq l\leq n} T(s(r),g_l) \right).
    %&\lesssim S(r)+\log^+T(s(r),g_l).
  \end{split}
\end{equation*}
Here we have also applied the proof of Corollary~\ref{cor:nn} by
introducing sufficiently many $\varrho_j$'s.
Analogously, from \eqref{definition-h} and Corollary~\ref{cor:nn} it follows that
\begin{equation*}
  \int_{D(0,r)} \left|h_{n+1}(z)\right|\, dm(z)
  \lesssim s(r) \log\frac{e\, s(r)}{s(r)-r} \left( S(r)+ \max_{1\leq l\leq n} T(s(r),g_l) \right).
\end{equation*}
The induction assumption applies for $B_{n,0}$, so that putting all estimates for $B_{n+1,0}$ together,
we deduce
the right magnitude of growth. The remaining coefficients $B_{n+1,j}$, $j=1,\ldots,n$,
in \eqref{coefficients} can be estimated similarly. This completes the proof
of the case $k=n+1$.
\end{proof}

%%%%%%%%%%%%%%%%%%%%%%
%%%% ---- PROOF ---- %%%%
%%%%%%%%%%%%%%%%%%%%%%

\begin{proof}[Proof of Theorem~\ref{limsup-thm}]
Suppose that (i) holds.
By the growth estimates \cite[Corollary~5.3]{HKR:2004},
\begin{equation*} 
    m(r,f) \lesssim 
    \sum_{j=0}^{k-1} \int_0^r \int_0^{2\pi} |A_j(se^{i\theta})|^\frac{1}{k-j} \, d\theta ds + 1.
\end{equation*}
By subharmonicity,
\begin{align*}
   & r \int_0^r \int_0^{2\pi} |A_j(se^{i\theta})|^\frac{1}{k-j} \, d\theta ds \\
   & \qquad =
     r \int_0^{r/2} \int_0^{2\pi} |A_j(se^{i\theta})|^\frac{1}{k-j} \, d\theta ds + r \int_{r/2}^r \int_0^{2\pi} |A_j(se^{i\theta})|^\frac{1}{k-j} \, d\theta ds \\
     & \qquad \leq
      \frac{r^2}{2} \int_0^{2\pi} 
      \left|A_j\left(\frac{r}{2}e^{i\theta}\right)\right|^\frac{1}{k-j} d\theta  \cdot \frac{\int_{r/2}^r s\, ds}{\frac{3}{8} r^2}+ 2 \int_{r/2}^r \int_0^{2\pi} |A_j(se^{i\theta})|^\frac{1}{k-j} \, d\theta s ds,
\end{align*}
and therefore
\begin{align} \label{eq:grr}
     T(r,f)
    \lesssim 
    \frac{1}{r} \sum_{j=0}^{k-1} \int_{D(0,r)} |A_j(se^{i\theta})|^\frac{1}{k-j} \, dm(z) + 1.
\end{align}
The implication from (i) to (ii) follows from the properties of $\Psi$.
The implication from (ii) to (iii) is trivial because of
$\lambda_{\Psi,\varphi}(0,f)\leq\rho_{\Psi,\varphi}(f)$. 
It remains to prove that (iii) implies (i).

Let $f_1,\ldots,f_k$ be linearly independent solutions of
\eqref{ldek_gen}, and let $y_1,\ldots,y_{k-1}$ be defined by
\eqref{ratios-plane}. Let $j\in \{1,\ldots ,k-1\}$. We note that
the zeros and poles of $y_j=f_j/f_k$ are sequences with
$(\Psi,\varphi)$-exponent of convergence $\leq \lambda$ by the assumption~(iii). The same is true for the 1-points of $y_j$,
as they are precisely the zeros of $f_j-f_k$, which is also a
solution of \eqref{ldek_gen}. 
In other words,
\begin{equation} \label{eq:klk}
\max\Big\{ \lambda_{\Psi,\varphi}(0,y_j),\lambda_{\Psi,\varphi}(\infty,y_j),\lambda_{\Psi,\varphi}(1,y_j) \Big\} \leq \lambda.
\end{equation}
Suppose that $y_j(0)\neq 0,\infty,1$ and $y_j'(0)\neq 0$.
By the second main theorem of
Nevanlinna \cite[Theorem~1.4]{Ykirja} and the Gol'dberg-Grinshtein estimate \cite[Corollary~3.2.3]{CY:year}, we now have
\begin{equation} \label{eq:smt}
  \begin{split}
    T(r,y_j)\leq\  &N(r,y_j,0)+N(r,y_j,\infty)+N(r,y_j,1)\\
    &+O\left(1+\log^+\frac{s(r)}{r(s(r)-r)}+\log^+T(s(r),y_j)\right).
  \end{split}
\end{equation}
Since $\rho_{\Psi,\varphi}(f)<\infty$ for all solutions $f$ of \eqref{ldek_gen}, we deduce $\rho_{\Psi,\varphi}(y_j)<\infty$.
% for all $j=1,\dotsc,k-1$. 
%by \cite[Theorem~1.4 and Lemma~1.3]{Ykirja}.
%By a~standard argument,
%the properties of the functions $\Psi$ and $\varphi$ now %yield
In fact, we prove
\begin{equation}\label{order-ratio-plane}
  \rho_{\Psi,\varphi}(y_j)\leq \lambda,\quad j=1,\ldots,k-1.
\end{equation}
Clearly we may suppose that $T(r,y_j)$ is an~unbounded
function of $r$. Since
\begin{equation*}
\begin{split}
    \frac{\Psi\big( \log\log \, T(s(r),y_j)\big)}{\log\varphi(r)}
    & = o \left(  \frac{\Psi\big( \log \, T(s(r),y_j)\big)}{\log\varphi(s(r))} \cdot
    \frac{\log\varphi(s(r))}{\log\varphi(r)} \right)
    =  o(1),
\end{split}
\end{equation*}
as $r\to\infty$,
the assertion \eqref{order-ratio-plane} follows
by \eqref{eq:eee} (or~\eqref{eq:eeee}), \eqref{eq:klk} and \eqref{eq:smt}. If
$y_j(0)\in\{0,\infty,1\}$ or $y_j'(0)=0$, then
the assertion \eqref{order-ratio-plane} follows
by standard arguments and the fact that rational
functions are of $(\Psi,\varphi)$-order zero by the assumption $\rho_{\Psi,\varphi}(\log^+ r)=0$.

It is claimed in \cite[p.~719]{K:1969} that the functions
$1,y_1,\ldots, y_{k-1}$ are linearly independent meromorphic
solutions of the differential equation
    \begin{equation*}
    y^{(k)}-\frac{W_{k-1}(z)}{W_k(z)}y^{(k-1)}+\cdots + (-1)^{k+1}\frac{W_1(z)}{W_k(z)}y'=0,
    \end{equation*}
where the functions $W_j$ are defined by
\eqref{determinant-plane}. This can be verified by restating
\cite[Proposition 1.4.7]{L:1993} with the aid of some basic
properties satisfied by Wronskian determinants \cite[Chapter~1.4]{L:1993}. 
From Lemma~\ref{mero-coeffs} we now conclude
\begin{equation*}
  \int_{D(0,r)}\left|\frac{W_i(z)}{W_k(z)}\right|^\frac{1}{k-i} dm(z)
  %S(r)+\max_{1\leq l\leq k-1}\log^+T(s(r),y_l)
   \lesssim s(r) \log\frac{e\, s(r)}{s(r)-r} \left( S(r)+ \max_{1\leq l\leq k-1} T(s(r),y_l) \right)
\end{equation*}
for all $i=1,\ldots, k-1$, or, in other words,
\begin{equation}\label{wronskian-ratios-plane}
  \int_{D(0,r)}\left|\frac{W_{k-i}(z)}{W_k(z)}\right|^\frac{1}{i} dm(z)
  \lesssim s(r) \log\frac{e\, s(r)}{s(r)-r} \left( S(r)+ \max_{1\leq l\leq k-1} T(s(r),y_l) \right)
\end{equation}
for all $i=1,\ldots, k-1$.
By \eqref{W1}, \eqref{W2}, \eqref{determinant-plane} and
\eqref{order-ratio-plane} it is clear that $\rho_{\Psi,\varphi}(W_k)\leq\lambda$.
Since $\sqrt[k]{W_k}$ is a well defined meromorphic function in $D(0,R)$ by \eqref{well-defined}, it
follows that $\rho_{\Psi,\varphi}(\sqrt[k]{W_k})\leq\lambda$. By
Corollary~\ref{cor:nn}, we have
\begin{equation}\label{Wk-logderivatives}
  \begin{split}
    & \int_{D(0,r)}\left|\frac{\left(\sqrt[k]{W_k}\right)^{(k-i-j)}(z)}{\sqrt[k]{W_k}(z)}\right|^\frac{1}{k-i-j} \, dm(z)\\
    & \qquad \lesssim s(r) \log\frac{e\, s(r)}{s(r)-r} \left( S(r)+  T(s(r),W_k) \right)\\
    & \qquad \lesssim s(r) \log\frac{e\, s(r)}{s(r)-r} \left( S(r)+ \max_{1\leq l\leq k-1} T(s(r),y_l) \right),
    %& \qquad \lesssim S(r)+\log^+T(s(r),W_k)
    %\lesssim S(r)+\max_{1\leq j\leq %k-1}\log^+T(s(r),y_l),
  \end{split}
\end{equation}
where $i$ and $j$ are as in \eqref{coefficients-rep-plane}. From \eqref{coefficients-rep-plane}, we deduce
\begin{equation*}
  |A_j|^\frac{1}{k-j}\lesssim \left|\frac{\left(\sqrt[k]{W_k}\right)^{(k-j)}}{\sqrt[k]{W_k}}\right|^\frac{1}{k-j}
  +\sum_{i=1}^{k-j}\left|\frac{W_{k-i}}{W_k}\right|^\frac{1}{k-j}
  \left|\frac{\left(\sqrt[k]{W_k}\right)^{(k-i-j)}}{\sqrt[k]{W_k}}\right|^\frac{1}{k-j}.
\end{equation*}

Finally, we make use of H\"older's inequality with conjugate indices $p=(k-j)/i$ and $q=(k-j)/(k-i-j)$,
$1\leq i< k-j$, ($i=k-j$ is a removable triviality) together with
\eqref{wronskian-ratios-plane} and \eqref{Wk-logderivatives}, and conclude
\begin{equation*}
    \frac{1}{r} \int_{D(0,r)} |A_j(z)|^\frac{1}{k-j} \, dm(z)
    \lesssim \frac{s(r)}{r} \log\frac{e\, s(r)}{s(r)-r} \left( S(r)+ \max_{1\leq l\leq k-1} T(s(r),y_l) \right)
\end{equation*}
for $j=0,\ldots,k-1$.
By \eqref{eq:eee}, \eqref{order-ratio-plane} and
the properties of $\Psi$ and $\varphi$, we deduce 
$$\rho_{\Psi,\varphi}\left(\frac{1}{r} \int_{D(0,r)}|A_j(z)|^\frac{1}{k-j}\, dm(z)\right)\leq\lambda, \quad j=0,\ldots,k-1.$$

We have proved that (i), (ii), (iii) are equivalent. 
Suppose that there exists an appropriate function for which the equality holds in one of these three inequalities. 
If a strict inequality holds in either of the remaining two inequalities, then a strict inequality should hold in all three, 
which is a contradiction.
\end{proof}

%%%%%%%%%%%%%%%%%%%%%%%
%%%% ---- SECTION ---- %%%%
%%%%%%%%%%%%%%%%%%%%%%%

\section{Proof of Theorem~\ref{thm:psi}}

Note that the assumption $\Psi(x^2)\lesssim \Psi(x)$, $0\leq x < \infty$, implies
\begin{equation*}
  \Psi(xy) \lesssim \Psi(x) + \Psi(y), \quad \Psi(x + y) \lesssim \Psi(x) + \Psi(y)  + 1
\end{equation*}
for all $0\leq x,y <\infty$. The following result is a~counterpart of Lemma~\ref{mero-coeffs}.

%%%%%%%%%%%%%%%%%%%%%%
%%%% ---- LEMMA ---- %%%%
%%%%%%%%%%%%%%%%%%%%%%

\begin{lemma}\label{mero-coeffs2}
Suppose that $\Psi, s,\omega$ are functions as in Theorem~\ref{thm:psi}.
Let $g_1,\ldots, g_k$ be linearly independent
meromorphic solutions of a linear differential equation \eqref{lde-plane-preliminaryM}
with coefficients $B_0,\ldots,B_{k-1}$ meromorphic in $D(0,R)$. If
\begin{equation*}
  \int_0^R \Psi\big( T(r,g_j) \big) \, \omega(r) \, dr < \infty, \quad j=0,\dotsc,k,
\end{equation*}
then
\begin{equation*}
  \int_0^R \Psi\bigg( \int_{D(0,r)} |B_j(z)|^{\frac{1}{k-j}} \, dm(z) \bigg) \, \omega(r)\, dr < \infty,
  \quad j=0,\dotsc,k-1.
\end{equation*}
\end{lemma}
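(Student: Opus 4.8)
The plan is to mimic the inductive structure of the proof of Lemma~\ref{mero-coeffs}, but now track the quantities under the integral transform $\psi \mapsto \int_0^R \Psi(\psi(r))\,\omega(r)\,dr$ rather than the $(\Psi,\varphi)$-order. The base case $k=1$ is handled by Corollary~\ref{cor:psi}: if $g_1$ is a non-trivial meromorphic solution of $g' + B_0 g = 0$, then $|B_0| = |g_1'/g_1|$, and Corollary~\ref{cor:psi} (with the classical choice of $s$) gives, outside an exceptional set $E$ of upper density at most $\delta$,
\[
  \int_{D(0,r)} |B_0(z)| \, dm(z) \lesssim s(r)\,\log\frac{e\,s(r)}{s(r)-r}\,\bigl( S(r) + T(s(r),g_1)\bigr),
\]
and the integrated hypothesis~\eqref{eq:intass}, the monotonicity of $\widehat\omega$, and the doubling-type conditions $\widehat\omega(r) \lesssim \widehat\omega(s(r))$ and $\Psi(x^2)\lesssim \Psi(x)$ allow one to absorb the exceptional set and conclude $\int_0^R \Psi(\int_{D(0,r)}|B_0|\,dm)\,\omega(r)\,dr < \infty$. (The case $g^{(k)} + B_0 g = 0$ with no middle terms is identical via Corollary~\ref{cor:psi} applied to $|f^{(k)}/f^{(0)}|^{1/k}$.)

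For the inductive step I would keep the same Frank--Hennekemper bookkeeping as in Lemma~\ref{mero-coeffs}: pass from $n$ to $n+1$ linearly independent solutions, introduce $h_{n+1}$ by~\eqref{definition-h}, and use the recursion~\eqref{coefficients} expressing $B_{n+1,j}$ in terms of $B_{n,j}$, $B_{n,j-1}$, $B_{n,j}'$ and $h_{n+1}$. The quantity $\int_{D(0,r)} |h_{n+1}|\,dm$ is again a logarithmic-derivative integral and is controlled by Corollary~\ref{cor:psi}/Corollary~\ref{cor:nn} in terms of $\max_l T(s(r),g_l)$; by the hypothesis on the $g_l$ and the properties of $\Psi$, its $\Psi$-$\omega$-integral is finite. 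The term involving $B_{n,j}'$ is handled exactly as in Lemma~\ref{mero-coeffs} by Hölder's inequality, writing $|B_{n,j}'|^{1/(n+1)} = |B_{n,j}'/B_{n,j}|^{1/(n+1)} \cdot |B_{n,j}|^{(1)/(n+1)\cdot(n/n)}$, so that $\int_{D(0,r)}|B_{n,j}'|^{1/(n+1)}\,dm$ is dominated by a product of $\bigl(\int |B_{n,j}'/B_{n,j}|\,dm\bigr)^{1/(n+1)}$ and $\bigl(\int |B_{n,j}|^{1/n}\,dm\bigr)^{n/(n+1)}$, the first factor controlled by Corollary~\ref{cor:psi} and the second by the induction hypothesis (again using $\eqref{rep0}$ to identify $B_{n,j}$ with $-W_j/W$). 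Summing and using $\Psi(xy)\lesssim\Psi(x)+\Psi(y)$ and $\Psi(x+y)\lesssim\Psi(x)+\Psi(y)+1$ from the remark preceding the lemma, one concludes the desired finiteness for every $B_{n+1,j}$.

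The main obstacle, as in the base case, is that Corollary~\ref{cor:psi} produces an exceptional set $E = E(\delta)$ of positive (bounded) upper density, so the pointwise bound on the logarithmic-derivative integral fails on $E$, and this has to be dealt with each time the corollary is invoked. The remedy is the standard one: for $r\in E$ replace $r$ by a nearby point $r^\ast \notin E$ with $r^\ast \in (r, s(r))$ — possible since $\overline d(E)<1$ — and use monotonicity of $T$ and of the integral $\int_{D(0,r)}|\cdot|\,dm$ together with $\widehat\omega(r)\lesssim\widehat\omega(s(r))$ to transfer the estimate back; equivalently one integrates the crude bound $\int_{D(0,r)}|\cdot|\,dm \lesssim s(r)\log(es(r)/(s(r)-r))(S(r)+\max_l T(s(r),g_l))$, valid for \emph{all} $r$ by Corollary~\ref{cor:nn}, against $\Psi(\cdot)\,\omega(r)\,dr$, and then~\eqref{eq:intass} plus $\Psi(xy)\lesssim\Psi(x)+\Psi(y)$ plus the hypothesis $\int_0^R\Psi(T(r,g_j))\omega(r)\,dr<\infty$ (with a change of variables $r\mapsto s(r)$ absorbed using $\widehat\omega(r)\lesssim\widehat\omega(s(r))$, as in the proof of Theorem~\ref{thm:psi}) finish the argument without any exceptional set surviving. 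A secondary technical point is ensuring that the change of variable from $s(r)$ back to $r$ in the weighted integral is legitimate; this is precisely where the smoothness condition $\widehat\omega(r)\lesssim\widehat\omega(s(r))$ is used, exactly as flagged in observation~(f).
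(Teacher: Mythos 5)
Your overall route is the paper's: establish, for every $r$ and with no exceptional set, the bound
$\int_{D(0,r)}|B_j(z)|^{1/(k-j)}\,dm(z)\lesssim s(r)\log\frac{e\,s(r)}{s(r)-r}\big(S(r)+\max_l T(s(r),g_l)\big)$,
then apply $\Psi$, use $\Psi(xy)\lesssim\Psi(x)+\Psi(y)$ and $\Psi(x+y)\lesssim\Psi(x)+\Psi(y)+1$, kill the geometric factor by \eqref{eq:intass}, and finally show $\int_0^R\Psi\big(T(s(r),g_l)\big)\omega(r)\,dr<\infty$ via $\widehat{\omega}(r)\lesssim\widehat{\omega}(s(r))$. (The paper writes out only the special case $g^{(k)}+B_0g=0$ using Corollary~\ref{cor:nn} and refers the general case to the Frank--Hennekemper scheme; your induction is essentially a re-run of Lemma~\ref{mero-coeffs}, which you could simply invoke to get the displayed bound for all $r$.) One correction of attribution: Corollary~\ref{cor:psi} is the wrong tool for your base case, since it bounds the circle integrals $\int_0^{2\pi}|f^{(k)}/f^{(j)}|^{1/(k-j)}\,d\theta$ outside an exceptional set rather than the area integrals you need; the exceptional-set difficulties you then discuss at length are self-inflicted, and your ``equivalently'' remark --- use Corollary~\ref{cor:nn}/\eqref{eq:nice}, valid for all $r$ with no exceptional set --- is the actual argument and is exactly what the paper does.

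The step you call ``secondary'' is in fact the crux, and as stated it has a gap: ``a change of variables $r\mapsto s(r)$ absorbed using $\widehat{\omega}(r)\lesssim\widehat{\omega}(s(r))$'' does not by itself give $\int_0^R\Psi\big(T(s(r),g_l)\big)\omega(r)\,dr<\infty$. Substituting $t=s(r)$ produces the weight $\omega\big(s^{-1}(t)\big)\big(s^{-1}\big)'(t)$, and nothing in the hypotheses compares this \emph{pointwise} with $\omega(t)$: the assumption controls the tails $\widehat{\omega}$, not $\omega$ itself, which may oscillate. The correct mechanism, carried out in the paper, exploits the monotonicity of $t\mapsto\Psi\big(T(t,g_l)\big)$: integrate by parts once to replace the transformed weight by its tail $\widehat{\omega}\big(s^{-1}(t)\big)$, apply $\widehat{\omega}\big(s^{-1}(t)\big)\lesssim\widehat{\omega}(t)$, and integrate by parts back, collecting only harmless boundary terms. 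With that step made explicit (and the base case routed through Corollary~\ref{cor:nn} rather than Corollary~\ref{cor:psi}), your argument is complete and coincides with the paper's.
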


%%%%%%%%%%%%%%%%%%%%%%
%%%% ---- PROOF ---- %%%%
%%%%%%%%%%%%%%%%%%%%%%

\begin{proof}
We only consider a~special case of \eqref{lde-plane-preliminaryM},
where all intermediate coefficients are identically zero, i.e., 
\begin{equation} \label{ldek}
  g^{(k)} + B_0 g = 0.
\end{equation}
The general case can be obtained by using the Frank-Hennekemper approach as in 
the proof of Lemma~\ref{mero-coeffs}, or by applying the standard order reduction procedure 
\cite[pp.~106--107]{P:1978}.

Let $g$ be any non-trivial meromorphic solution of \eqref{ldek}. Now
\begin{equation} \label{eq:grgr}
  \int_r^R \Psi\big( T(t,g) \big) \, \omega(t) \, dt 
  \geq \Psi\big( T(r,g) \big) \, \widehat{\omega}(r),
  \quad 0\leq r < R.
\end{equation}
Note that the left-hand side of \eqref{eq:grgr} decays to zero as $r\to R^-$.
Corollary~\ref{cor:nn} implies
\begin{equation*}
  \begin{split}
    \int_{D(0,r)} \bigg| \frac{g^{(k)}(z)}{g(z)} \bigg|^{\frac{1}{k}} \, dm(z)
    & \lesssim s(r) \log\frac{e \, s(r)}{s(r)-r} \\
    & \qquad \times \left( 1  + \log^+\!\frac{s(r)}{r(s(r)-r)} +  T\big(s(r),g \big) \right)
  \end{split}
\end{equation*}
for all $0<r<R$. Therefore, by the properties of $\Psi$, we obtain
\begin{align}
  & \int_0^R \Psi\bigg(  \int_{D(0,r)} |B_0(z)|^{\frac{1}{k}} \, dm(z)  \bigg) \, \omega(r) \, dr\notag\\
  & \qquad = \int_0^R \Psi\bigg(  
  \int_{D(0,r)} \bigg| \frac{g^{(k)}(z)}{g(z)} \bigg|^{\frac{1}{k}} \, dm(z)  \bigg) \, \omega(r) \, dr\notag\\
  & \qquad \lesssim \int_0^R \Psi\big( T(s(r),g)  \big) \, \omega(r) \, dr
  + \int_0^R \Psi\bigg( 
  s(r) \log\frac{e \, s(r)}{s(r)-r} 
  %\!\log^+ \!\frac{1}{s(r)-r} 
  \bigg) \, \omega(r) \, dr \label{eq:tag1} +1.
  %\\
  %& \qquad \qquad   + \int_0^R \Psi\big( s(r) \big) \, %\omega(r) \, dr + 1 . \label{eq:tag2}
\end{align}
The latter integral in \eqref{eq:tag1} is finite by 
%the second integral in 
\eqref{eq:intass}, while
the former integral is integrated by parts as follows:
\begin{align*}
  & \int_0^R \Psi\big( T(s(r),g)  \big) \, \omega(r) \, dr
%  & \qquad 
= \int_{s(0)}^R \Psi\big( T(t,g)  \big) \, \omega\big(s^{-1}(t) \big) \, \big(s^{-1}\big)'(t) \, dt \\
  & \qquad = -\Psi\big( T(s(0),g)  \big)  \left( - \int_{s(0)}^R  \omega\big(s^{-1}(x) \big) \, \big(s^{-1}\big)'(x) \, dx \right) \\
  &\qquad \qquad  - \int_{s(0)}^R \left( \frac{\partial}{\partial t} \Psi\big( T(t,g)  \big) \right) 
  \left( - \int_t^R  \omega\big(s^{-1}(x) \big) \, \big(s^{-1}\big)'(x) \, dx \right)  dt \\
  & \qquad = \Psi\big( T(s(0),g)  \big) \, \widehat{\omega}(0) 
  + \int_{s(0)}^R \left( \frac{\partial}{\partial t} \Psi\big( T(t,g)  \big) \right) \widehat{\omega}\big(s^{-1}(t)\big)   \,  dt.
\end{align*}
By using the assumption on $\widehat{\omega}$ and integrating by parts again, we deduce
\begin{align*}
  & \int_0^R \Psi\big( T(s(r),g)  \big) \, \omega(r) \, dr\\
  & \qquad \lesssim \Psi\big( T(s(0),g)  \big) \, \widehat{\omega}(0) 
  + \int_{s(0)}^R \left( \frac{\partial}{\partial t} \Psi\big( T(t,g)  \big) \right) \widehat{\omega}(t)   \,  dt \\
  & \qquad \lesssim \Psi\big( T(s(0),g)  \big) \, \widehat{\omega}(0) 
  + \lim_{t\to R^-} \!\!\bigg( \!\Psi\big( T(t,g) \big) \, \widehat{\omega}(t) \!\bigg)
  + \int_{s(0)}^R \!\Psi\big( T(t,g)  \big) \, \omega(t)   \,  dt \\
  & \qquad \lesssim \Psi\big( T(s(0),g)  \big) \, \frac{\widehat{\omega}(0)}{\widehat{\omega}(s(0))} \int_{s(0)}^R \omega(t)\, dt
  + \int_{s(0)}^R \Psi\big( T(t,g)  \big) \, \omega(t)   \,  dt \\
  & \qquad \lesssim  \left( \frac{\widehat{\omega}(0)}{\widehat{\omega}(s(0))} + 1\right) 
  \int_{s(0)}^R \Psi\big( T(t,g)  \big) \, \omega(t)   \,  dt < \infty.
\end{align*}
%Similarly, \eqref{eq:tag2} is at most a~constant multiple %of the first integral in \eqref{eq:intass}.
The assertion follows.
\end{proof}

%%%%%%%%%%%%%%%%%%%%%%
%%%% ---- PROOF ---- %%%%
%%%%%%%%%%%%%%%%%%%%%%

\begin{proof}[Theorem~\ref{thm:psi}]
Assume that (i) holds, and let $f$ be any solution of \eqref{ldek_gen}.
By~\eqref{eq:grr},
%\cite[Corollary~5.3]{HKR:2004}, 
there exists a~constant $C=C(f)>0$ 
such that
\begin{equation*}
  \int_0^R \Psi\big( T(r,f) \big) \, \omega(r) \, dr 
  \leq \int_0^R \Psi\Bigg(  \frac{C}{r} \sum_{j=0}^{k-2} \int_{D(0,r)} |A_j(z)|^{\frac{1}{k-j}} \, dm(z) + C \Bigg) \, \omega(r) \, dr.
\end{equation*}
We deduce (ii) by the properties of $\Psi$.

Since (ii) implies (iii) trivially, we only need to prove that (iii) implies
(i). A~similar argument appears in the proof of Theorem~\ref{limsup-thm}, and therefore
we will only sketch the proof. Let $f_1,\dotsc,f_k$ be linearly independent solutions of \eqref{ldek_gen},
and define $y_j = f_j/f_k$ for $j=1,\dotsc,k$. 

Integrating by parts as in the proof of 
Lemma~\ref{mero-coeffs2}, and using 
$\Psi(\log x) = o(\Psi(x))$, 
we deduce for each $\varepsilon>0$ the existence of $r_0\in (0,R)$
such that
\begin{equation*}
    \int_{r_0}^{R_0} \Psi\big( \log T(s(r),y_j) \big)\, \omega(r)\, dr
    \leq \varepsilon
    \int_{r_0}^{R_0} \Psi\big(T(r,y_j) \big)\, \omega(r)\, dr
\end{equation*}
for all $R_0\in (r_0,R)$.
By applying the second main theorem of Nevanlinna~\eqref{eq:smt}, 
choosing an~appropriate $\varepsilon>0$ and re-organizing terms, we obtain
\begin{equation*}
  \begin{split}
    \int_{r_0}^{R_0} \Psi\big(T(r,y_j)\big) \, \omega(r)\, dr\lesssim\  & 
    \max_{\zeta\in\{0,\infty,1\}}\int_{r_0}^{R_0} \Psi\big(N(r,y_j,\zeta)\big)\, \omega(r)\, dr +1.
  \end{split}
\end{equation*}
By letting $R_0\to R$,
and applying~(iii), we deduce
\begin{equation*}
  \int_0^R \Psi\big( T(r,y_j) \big) \, \omega(r) \, dr < \infty, \quad j=1, \dotsc,k.
\end{equation*}
The condition~(i) can be deduced from Lemma~\ref{mero-coeffs2} by
an~argument similar to that in the proof of Theorem~\ref{limsup-thm}.
With this guidance, we consider  Theorem~\ref{thm:psi} proved.
\end{proof}

%%%%%%%%%%%%%%%%%%%%%%%
%%%% ---- SECTION ---- %%%%
%%%%%%%%%%%%%%%%%%%%%%%

\section{Proof of Theorem~\ref{thm:ident}} \label{sec:mem}

The proof is similar to that of \cite[Theorem~7.9]{PR:2014}.
We content ourselves by proving the following result, which plays a~crucial role in the reasoning yielding
Theorem~\ref{thm:ident}. More precisely, it is a~counterpart of \cite[Lemma~7.7]{PR:2014}.

%%%%%%%%%%%%%%%%%%%%%%%
%%%% ---- LEMMA ---- %%%%%
%%%%%%%%%%%%%%%%%%%%%%%

\begin{lemma} \label{lemma:BN}
Let $\omega\in\mathcal{D}$, and let $k>j\geq 0$ be integers. 
If $f$ is a~meromorphic function in~$\D$ such that
%\begin{equation*}
  $\int_0^1 T(r,f) \, \omega(r)\,  dr<\infty$,
%\end{equation*}
then
\begin{equation*}
  \int_{\D} \, \bigg| \frac{f^{(k)}(z)}{f^{(j)}(z)} \bigg|^{\frac{1}{k-j}}\, \widehat{\omega}(z)\, dm(z) < \infty.
\end{equation*}
\end{lemma}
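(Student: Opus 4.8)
The plan is to reduce the statement to an application of Corollary~\ref{cor:psi}, integrated against the weight $\widehat\omega$. The key point is that Corollary~\ref{cor:psi} gives, for a suitable auxiliary function $s$ and a measurable exceptional set $E\subset[0,1)$ of upper density at most $\delta$, the pointwise bound
\begin{equation*}
  \int_0^{2\pi} \bigg| \frac{f^{(k)}(re^{i\theta})}{f^{(j)}(re^{i\theta})} \bigg|^{\frac1{k-j}} d\theta
  \lesssim \frac{T(s(r),f) - \log(s(r)-r)}{s(r)-r}, \quad r\in[0,1)\setminus E.
\end{equation*}
The natural choice is $s(r) = 1-\tfrac12(1-r)$, so that $s(r)-r = \tfrac12(1-r)$ is decreasing, $s(r)\in(r,1)$, and $1-s(r)\asymp 1-r$. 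Then, since $\widehat\omega(z)=\widehat\omega(|z|)$ and $dm(z) = r\,d\theta\,dr$, one writes
\begin{equation*}
  \int_{\D} \bigg| \frac{f^{(k)}(z)}{f^{(j)}(z)} \bigg|^{\frac1{k-j}} \widehat\omega(z)\, dm(z)
  = \int_0^1 \bigg( \int_0^{2\pi} \bigg| \frac{f^{(k)}(re^{i\theta})}{f^{(j)}(re^{i\theta})} \bigg|^{\frac1{k-j}} d\theta \bigg) \widehat\omega(r)\, r\, dr,
\end{equation*}
split the outer integral into the part over $[0,1)\setminus E$ and the part over $E$, and handle the two pieces separately.

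For the part over $[0,1)\setminus E$, I would insert the pointwise bound and estimate
\begin{equation*}
  \int_{[0,1)\setminus E} \frac{T(s(r),f) + \log\frac{1}{1-r}}{1-r}\, \widehat\omega(r)\, dr.
\end{equation*}
The two crucial ingredients here are properties of the class $\mathcal D = \widehat{\mathcal D}\cap\widecheck{\mathcal D}$. First, $\omega\in\widehat{\mathcal D}$ gives $\widehat\omega(r) \lesssim \widehat\omega(s(r)) = \widehat\omega((1+r)/2)$, which lets one replace $T(s(r),f)\widehat\omega(r)$ by a comparable expression in the variable $s(r)$ and then, after the substitution $t=s(r)$ (for which $dr \asymp dt$ and $1-r\asymp 1-t$), bound it by $\int_0^1 \frac{T(t,f)}{1-t}\widehat\omega(t)\,dt$. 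Second, one needs $\int_0^1 \frac{\widehat\omega(t)}{1-t}\,dt < \infty$ and, more to the point, that $\int_0^1 T(t,f)\,\omega(t)\,dt<\infty$ implies $\int_0^1 \frac{T(t,f)}{1-t}\widehat\omega(t)\,dt<\infty$; this is exactly where $\widecheck{\mathcal D}$ enters, since for $\omega\in\widecheck{\mathcal D}$ one has the reverse-type doubling $\widehat\omega(r) \gtrsim \widehat\omega(1-(1-r)/K)$, which is known to be equivalent to $\int_r^1 \frac{\widehat\omega(t)}{1-t}\,dt \lesssim \widehat\omega(r)$, i.e. $\widehat\omega$ itself behaves, up to constants, like $\int_r^1 \omega$; integrating by parts (or using Fubini to swap the order of integration in $\int_0^1 T(t,f)\omega(t)\,dt = \int_0^1\int_0^t (\partial_u T(u,f))\,du\,\omega(t)\,dt$) then transfers the finiteness across. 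The logarithmic term $\int_{[0,1)} \frac{\log(1/(1-r))}{1-r}\widehat\omega(r)\,dr$ is finite by the same $\widecheck{\mathcal D}$ property, since $\widehat\omega(r) = o(1)$ fast enough (again $\int_r^1\widehat\omega(t)/(1-t)\,dt\lesssim\widehat\omega(r)\to0$), together with a routine summation over dyadic rings $1-2^{-n}\le r<1-2^{-n-1}$.

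For the part over the exceptional set $E$, the difficulty is that the pointwise Corollary~\ref{cor:psi} bound does not apply there, so one cannot integrate the angular integral of $|f^{(k)}/f^{(j)}|^{1/(k-j)}$ directly. The remedy is to avoid $E$ altogether by applying Corollary~\ref{cor:psi} with a family of shifted auxiliary functions, or more simply to iterate: choose a sequence of radii $r_n\to1$ of fixed hyperbolic step (as in the proof of Corollary~\ref{cor:psi} itself, via $r_n = (r_{n-1}+s(r_{n-1}))/2$), apply the area estimate \eqref{eq:nice} on each ring $r_{n-1}<|z|<r_n$ to get $\int_{r_{n-1}<|z|<r_n}|f'/f|\,dm \lesssim T(s(r_{n-1}),f)+1$ (and its $k,j$ analogue via Corollary~\ref{cor:nn}), note that $\widehat\omega$ is essentially constant $\asymp\widehat\omega(r_n)$ on each ring because $\omega\in\mathcal D$, and sum $\sum_n \widehat\omega(r_n)\big(T(s(r_{n-1}),f)+1\big)$; this sum is comparable to $\int_0^1 \big(T(t,f)+1\big)\widehat\omega(t)/(1-t)\,dt$ up to constants, which is finite as above. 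This route actually sidesteps the exceptional set entirely and is probably the cleanest way to organize the whole proof; I expect the main obstacle to be bookkeeping the $\mathcal D$-regularity estimates — precisely, proving and invoking the equivalence ``$\omega\in\widecheck{\mathcal D} \iff \int_r^1 \widehat\omega(t)/(1-t)\,dt \lesssim \widehat\omega(r)$'' and ``$\omega\in\widehat{\mathcal D} \iff \widehat\omega(r)\asymp\widehat\omega((1+r)/2)$'' — and combining them to pass from $\int_0^1 T(r,f)\omega(r)\,dr<\infty$ to the weighted-by-$\widehat\omega/(1-r)$ version, rather than any single hard inequality.
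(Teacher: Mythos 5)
Your second route is essentially the paper's proof, and you were right to abandon the first: the pointwise bound of Corollary~\ref{cor:psi} genuinely fails on the exceptional set $E$, and since the assertion requires integrating $\int_0^{2\pi}|f^{(k)}/f^{(j)}|^{1/(k-j)}\,d\theta$ against $\widehat\omega(r)\,r\,dr$ over \emph{all} of $[0,1)$, an exceptional set of positive density cannot simply be discarded. The fix you propose --- decompose $\D$ into rings $\varrho_n\le|z|<\varrho_{n+1}$, apply the area estimate of Corollary~\ref{cor:nn} on each ring, use that $\widehat\omega$ is essentially constant on each ring by the $\widehat{\mathcal D}$-regularity, and sum --- is exactly what the paper does. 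The one organizational difference is in the choice of radii and the closing step: the paper takes $\varrho_n$ determined by $\widehat\omega(\varrho_n)=\widehat\omega(0)K^{-n}$, which combined with $\omega\in\mathcal D$ forces $1-\varrho_n$ to be geometric, and then the key sum $\sum_n\widehat\omega(\varrho_n)T(\varrho_{n+2},f)$ is compared to $\int_0^1 T(r,f)\,\omega(r)\,dr$ directly via the identity $\widehat\omega(\varrho_{n+2})-\widehat\omega(\varrho_{n+3})=\int_{\varrho_{n+2}}^{\varrho_{n+3}}\omega$ and the fact that $\widehat\omega(\varrho_n)/(\widehat\omega(\varrho_{n+2})-\widehat\omega(\varrho_{n+3}))$ is a constant depending only on $K$. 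You instead fix $1-r_n$ geometric and route the $T$-sum through the intermediate integral $\int_0^1 T(t,f)\,\widehat\omega(t)(1-t)^{-1}\,dt$, which you then dominate by $\int_0^1 T\,\omega\,dt$ using the $\widecheck{\mathcal D}$ characterization $\int_r^1\widehat\omega(t)(1-t)^{-1}\,dt\lesssim\widehat\omega(r)$ and an integration by parts (together with $\widehat\omega(r)T(r,f)\to0$, which does follow from the finiteness hypothesis since $T$ is increasing). Both are correct; the paper's radii choice just compresses this last step. Note also that the logarithmic term $\log^+\frac{1}{R-r}$ from Corollary~\ref{cor:nn} needs its own (entirely similar) summation, which the paper handles as the sum $S_2$ and reduces to $\int_0^1\log\frac{1}{1-s}\,\omega(s)\,ds<\infty$; your proposal addresses this in passing but you should make sure you account for it explicitly.
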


%%%%%%%%%%%%%%%%%%%%%%
%%%% ---- PROOF ---- %%%%
%%%%%%%%%%%%%%%%%%%%%%

\begin{proof}
Let $\{\varrho_n\}$ be a sequence of points in $(0,1)$ such that $\varrho_0=0$ and
$\widehat{\omega}(\varrho_n)=\widehat{\omega}(0)/K^n$
for $n\in\N$. By \cite[Lemma~2.1]{P:2016},
the assumption $\omega\in\widehat{\mathcal{D}}$ is equivalent to the fact that
there exist constants $K=K(\omega)>1$ and $C=C(\omega,K)>1$ such that $1-\varrho_n \geq C (1-\varrho_{n+1})$
for all $n\in\N$. Let $K$ be fixed in such a way.
The assumption $\omega\in\widecheck{\mathcal{D}}$
is equivalent to the fact that there exists a~constant $\mu=\mu(\omega,K)>1$ such that
$1-\varrho_n \leq \mu (1-\varrho_{n+1})$ for all $n\in\N$; see, for example, 
the beginning of the proof of \cite[Theorem~7]{PR:preprint}. These properties give
\begin{align*}
  \frac{\varrho_{n+2}-\varrho_n}{\varrho_{n+2}-\varrho_{n+1}}
  = \frac{(1-\varrho_n) - (1-\varrho_{n+2})}{(1-\varrho_{n+1})-(1-\varrho_{n+2})} 
  \leq  \frac{\mu -1/\mu}{1-1/C}, \quad n\in\N.
\end{align*}
Then, by Corollary~\ref{cor:nn}, we obtain
\begin{align*}
  &  \int_{\D} \, \bigg| \frac{f^{(k)}(z)}{f^{(j)}(z)} \bigg|^{\frac{1}{k-j}} \, \widehat{\omega}(z)\, dm(z)\\
  & \qquad \leq \sum_{n=0}^\infty \, \widehat{\omega}(\varrho_n) 
  \int_{\varrho_{n}\leq |z| < \varrho_{n+1}} \bigg| \frac{f^{(k)}(z)}{f^{(j)}(z)} \bigg|^{\frac{1}{k-j}} \, dm(z)\\
  & \qquad \lesssim \sum_{n=0}^\infty \, \widehat{\omega}(\varrho_n) 
  \, \log\frac{2e(\varrho_{n+2}-\varrho_n)}{\varrho_{n+2}-\varrho_{n+1}} 
  \left( 1 + \log\frac{1}{\varrho_{n+2}-\varrho_{n+1}}  + T(\varrho_{n+2}, f) \right)\\
  & \qquad \lesssim \sum_{n=0}^\infty \, \widehat{\omega}(\varrho_n) 
  \, 
  \left( 1 + \log\frac{1}{\varrho_{n+2}-\varrho_{n+1}}  + T(\varrho_{n+2}, f) \right)
  =: S_1 + S_2 + S_3.
\end{align*}
We consider these sums separately. Now
$S_1= \widehat{\omega}(0) \, \sum_{n=0}^\infty K^{-n}<\infty$, while
\begin{align*}
  S_2 & \leq \left( \log \frac{C}{C-1} \right) \, \sum_{n=0}^\infty \widehat{\omega}(\varrho_n) 
  + \frac{K^2}{K-1} \sum_{n=0}^\infty 
  \log\frac{1}{1-\varrho_{n+1}} \int_{\varrho_{n+1}}^{\varrho_{n+2}} \omega(s) \, ds\\
  & \lesssim 1 + \int_{\varrho_1}^1 \log\frac{1}{1-s} \, \omega(s) \, ds < \infty.
\end{align*}
To see that this last integral is finite, let $r_n=1-2^{-n}$ for $n\in\N\cup\{0\}$, and compute
\begin{align*}
  \int_0^1 \log\frac{1}{1-s} \, \omega(s) \, ds
  & = \sum_{n=0}^\infty \int_{r_n}^{r_{n+1}} \log\frac{1}{1-s} \, \omega(s) \, ds \\ 
  & \leq \sum_{n=0}^\infty \log\frac{1}{1-r_{n+1}} \, \Big( \widehat{\omega}(r_n) - \widehat{\omega}(r_{n+1})\Big)    \\
  & \lesssim  \sum_{n=0}^\infty \, \log\frac{1}{1-r_{n+1} }\,  \widehat{\omega}(r_{n+1})  \\
  & \lesssim \sum_{n=1}^\infty n \, \widehat{\omega}(r_n) \lesssim \sum_{n=1}^\infty \frac{n}{K^n} < \infty.
\end{align*}

To estimate the last sum, we write
\begin{align*}
  S_3  
  & =\sum_{n=0}^\infty \, \frac{\widehat{\omega}(\varrho_n)}
  {\widehat{\omega}(\varrho_{n+2}) - \widehat{\omega}(\varrho_{n+3})} \, T(\varrho_{n+2}, f) 
  \int_{\varrho_{n+2}}^{\varrho_{n+3}} \omega(r)\, dr \\
  & \leq \frac{K^3}{K-1}  \, \sum_{n=0}^\infty \int_{\varrho_{n+2}}^{\varrho_{n+3}} T(r, f) \, \omega(r)\, dr
  \leq \frac{K^3}{K-1}  \, \int_{\varrho_{2}}^{1} T(r, f) \, \omega(r)\, dr < \infty.
\end{align*}
This completes the proof of Lemma~\ref{lemma:BN}.
\end{proof}

%%%%%%%%%%%%%%%%%%%%%%%%%%%%%%%%
%%%% ---- BIBLIOGRAPHY ---- %%%%
%%%%%%%%%%%%%%%%%%%%%%%%%%%%%%%%

\end{document}